\providecommand{\U}[1]{\protect\rule{.1in}{.1in}}
\newtheorem{theorem}{Theorem}
\newtheorem{acknowledgement}[theorem]{Acknowledgment}
\newtheorem{condition}[theorem]{Condition}
\newtheorem{corollary}[theorem]{Corollary}
\newtheorem{definition}[theorem]{Definition}
\newtheorem{lemma}[theorem]{Lemma}
\newtheorem{proposition}[theorem]{Proposition}
\newtheorem{remark}[theorem]{Remark}
\newcommand{\R}{\mathbb{R}} \newcommand{\C}{\mathbb{C}}
\newcommand{\D}{\mathbb{D}} \newcommand{\N}{\mathbb{N}}
\newcommand{\Z}{\mathbb{Z}} 
\DeclareMathOperator{\Rm}{Rm}
\DeclareMathOperator{\Area}{Area}
\DeclareMathOperator{\radius}{radius}
\DeclareMathOperator{\Vol}{Vol}
\renewcommand{\tilde}{\widetilde} \renewcommand{\bar}{\overline}
\author{David Glickenstein} \address{University of Arizona\\617 N Santa Rita\\Tucson, AZ 85750 USA}\email{dglicken@arizona.edu}
\author{Lee Sidbury}
\title{Convergence of discrete conformal mappings on surfaces}
\keywords{Discrete differential geometry, circle packing, discrete conformal structure, convergence}
\subjclass{53A70, 52C26, 65E10}
\begin{document}
\begin{abstract}
	Discrete conformal mappings based on circle packing, vertex scaling, and related structures has had significant activity since Thurston proposed circle packing as a way to approximate conformal maps in the 1980s. The first convergence result of Rodin-Sullivan (1987) proved that circle packing maps do indeed converge to conformal maps to the disk. Recent results have shown convergence of maps of other discrete conformal structures to conformal maps as well. We give a general theorem of convergence of discrete conformal mappings between surfaces that allows for a variety of discrete conformal structures and manifolds with or without boundary. The mappings are a composition of piecewise linear discrete conformal mappings and Riemannian barycentric coordinates, called barycentric discrete conformal maps. Estimates of the barycentric discrete conformal maps allow extraction of convergent subsequences and estimates for the pullback of the Riemannian metric, proving conformality. 
\end{abstract}
	
	\maketitle

\section{Introduction}

Conformal mappings are approximated by mappings of circle packings, often called discrete conformal mappings, as conjectured by Thurston \cite{thurston_talk} and proven later by Rodin and Sullivan \cite{rodin-sullivan}. The approximation is by piecewise linear maps between the triangulations determined by the nerve of the circle packing. However, since not every geometric triangulation is the nerve of a circle packing, there is reason to consider similar conformal structures that may allow for different or even better approximation. One such structure is the vertex scaling discrete conformal structure introduced by Luo \cite{luo_yamabe}. Convergence of conformal mappings of tori with vertex scaling
was recently proven by Luo-Sun-Wu \cite{luo-sun-wu_convergence} (see also \cite{luo-wu-zhu_convergence}).

This paper generalizes the Rodin-Sullivan Theorem in two directions. First, we consider mappings between Riemannian surfaces, not only subsets of the complex plane. 
Second, we consider the entire zoo of  discrete conformal structures described in \cite{glick-disc-laplacians} and \cite{zhang2014unified}, shown to be the
same structures \cite{glickensteinthomas}. The assumptions allow for manifolds with or without boundary. 
The approach of this paper is to consider pullbacks
of Riemannian metrics and use the definition of conformality based on such maps. While different from the approach of Rodin-Sullivan, estimates for pullbacks of discrete Euclidean structures based on circle packings is implicit in the work of Rodin \cite{rodinschwarz,rodinschwarz2}. These are applied to Riemannian manifolds through the use of Riemannian barycentric maps.
\begin{figure}\label{fig:omega}
\includegraphics[width=.7\textwidth]{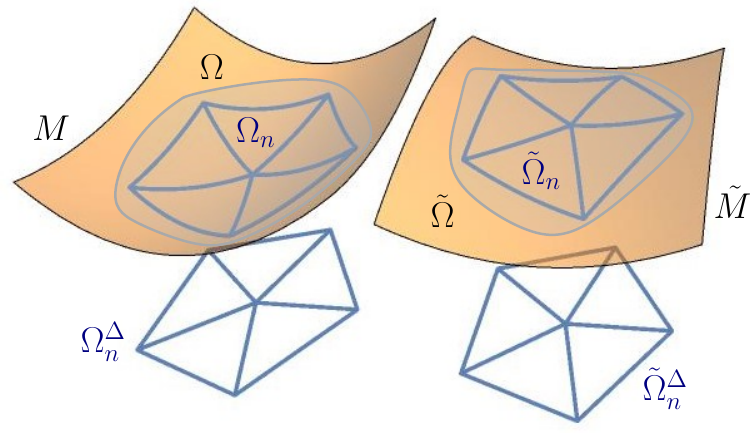}
\caption{Smooth and piecewise flat manifolds}
\end{figure}

Let $(M,g)$ and $(\tilde{M},\tilde{g})$ be complete Riemannian surfaces. In what follows, we consider pairs of submanifolds $\Omega \subseteq M$ and $\tilde{\Omega} \subseteq \tilde{M}$ and look to approximate conformal mappings between $\Omega$ and $\tilde{\Omega}$. We take sequences $\{\Omega_n\}$ and $\{\tilde{\Omega}_n\}$ that exhaust $\Omega$ and $\tilde{\Omega}$ and have geodesic triangulations with the same (combinatorial) triangulation $T_n$. These have corresponding piecewise flat triangulated spaces $\Omega^\Delta_n$ and $\tilde{\Omega}^\Delta_n$ with the same edge lengths but with flat simplices where the geometries are determined by discrete conformal parameters $f_n$ and $\tilde{f}_n$ for a given discrete conformal structure. See Figure \ref{fig:omega} for a schematic. Under appropriate assumptions (see Definition \ref{def:admissible-sequence}) we can prove the following.

\begin{theorem}
	\label{thm:main-thm}
	Let $\{(\Omega_n,\tilde{\Omega}_n,T_n,f_n,\tilde{f}_n, \epsilon_n)\}$ be an admissible
	sequence for $(\Omega, \tilde{\Omega})$ and let $\{\Phi_n:\Omega_n \to \tilde{\Omega}_n\}$ be the
	corresponding sequence of barycentric discrete conformal maps. Then the family
	$\{\Phi_n\}$ has a subsequence that converges uniformly on compact subsets of
	$\Omega$. Furthermore, if the admissible sequence is proper, then there exists
	a positive continuous function $e^F$ such that $\Phi_n^{*}\tilde{g}\to e^Fg$ in $L^{\infty}$ on compact
	subsets of $\Omega$, and hence convergence is to a conformal map.
\end{theorem}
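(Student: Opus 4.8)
The plan is to follow the broad strategy of Rodin--Sullivan, but organized around pullback metrics rather than around circle configurations. First I would fix a compact set $K \subseteq \Omega$ together with a compact set containing its images, and use completeness of $M$ and $\tilde M$ plus compactness to obtain uniform two-sided sectional curvature bounds and a uniform lower bound on injectivity radius on neighborhoods of these sets. The fullness assumption built into admissibility should then guarantee that for large $n$ the flat simplices of $\Omega^\Delta_n$ and $\tilde\Omega^\Delta_n$ meeting $K$ have aspect ratios bounded away from degeneracy, uniformly in $n$, so that the piecewise linear discrete conformal maps between the flat models are locally bilipschitz with constants independent of $n$. Composing with Riemannian barycentric coordinates, whose deviation from Euclidean barycentric coordinates is controlled in $C^{1}$ by the curvature and injectivity radius bounds and by the mesh size $\epsilon_n$, I get that the barycentric discrete conformal maps $\Phi_n$ are uniformly bounded and uniformly locally bilipschitz on $K$. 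Arzel\`a--Ascoli then yields a subsequence converging uniformly on compact subsets of $\Omega$ to a bilipschitz limit $\Phi$, which gives the first assertion.

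For the second assertion I would argue locally near a point $p \in \Omega$. Properness forces the mesh $\epsilon_n \to 0$, so after identifying a small geodesic ball around $p$ with a Euclidean ball via normal coordinates (an identification with errors that are $O(\epsilon_n)$ in the relevant norms, using the curvature bounds), the one-ring of each vertex near $p$, rescaled to unit size, approaches a configuration governed by a single discrete conformal structure on a flat patch. The local discrete conformal rigidity hypothesis --- the generalization of hexagonal rigidity --- says precisely that such flat discrete conformal patches admit no nontrivial local discrete conformal deformation other than those induced by conformal linear maps; equivalently, any discrete conformal map between two copies of the limiting patch is, up to lower order, a Euclidean similarity. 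Quantitatively this should give: for each small $r > 0$ there is $\delta(r,n) \to 0$ as $n \to \infty$ with $\bigl| \Phi_n^{*}\tilde g - \lambda_n\, g \bigr| \le \delta(r,n)$ on the ball $B(p,r)$, where $\lambda_n$ is the squared magnification factor of $\Phi_n$ at $p$, which this estimate also shows is nearly constant on $B(p,r)$. Passing to the subsequence from the first part, the functions $\lambda_n$ converge pointwise; equicontinuity of $\log \lambda_n$, inherited from the uniform bilipschitz bounds together with the near-constancy on small balls, upgrades this to uniform convergence on compact sets to a continuous positive function, which I would name $e^{F}$. A covering argument with a diagonal choice of $r$ and $n$ then yields $\Phi_n^{*}\tilde g \to e^{F} g$ in $L^{\infty}$ on compact subsets of $\Omega$, and in particular $\Phi^{*}\tilde g = e^{F} g$, so the limit is conformal.

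The main obstacle I expect is the rigidity step: converting the local discrete conformal rigidity statement into a uniform quantitative estimate that survives the passage through Riemannian barycentric coordinates. In the circle packing case this is exactly the content of the Rodin--Sullivan analysis of the regular hexagonal packing (ring lemma together with uniqueness/rigidity), and the analogue here must simultaneously control the convergence of rescaled one-rings to the model flat configuration, the stability of the rigidity statement under small perturbations of the combinatorics and of the discrete conformal parameters $f_n, \tilde f_n$, and the $C^{1}$-closeness of Riemannian to Euclidean barycentric coordinates. A secondary difficulty, also addressed by properness, is ruling out degeneration of the magnification factor --- i.e.\ ensuring that $e^{F}$ is bounded above and below away from $0$ on compact sets --- which plays the role of the ring lemma in preventing neighboring circles from having wildly disparate radii.
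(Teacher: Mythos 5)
There is a genuine gap, concentrated in your second step. In the paper, the Local Discrete Conformal Rigidity condition together with properness is already a \emph{quantitative} hypothesis: LDCR supplies the numbers $s_m$ controlling the oscillation of the ratio $H_n=e^{\tilde f_n}/e^{f_n}$ across an edge, and properness asserts $s_m\le \alpha/m$. You instead read LDCR as a qualitative rigidity statement about model flat patches and then defer ``converting the rigidity statement into a uniform quantitative estimate'' as the main obstacle --- but that conversion is precisely the content of the proof, and in the paper it is carried out by combining the LDCR ratios with the fullness-based comparison $m\gtrsim R/(2\epsilon_n)$ (Lemmas \ref{thm:deF} and \ref{thm:R-m-epsilon}) to get a \emph{uniform Lipschitz bound} on the interpolated ratio functions $e^F_n$ (Proposition \ref{thm:eF-equicontinuous}). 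Your quantitative surrogate is also too strong: you claim that for each fixed $r$ one has $\bigl|\Phi_n^*\tilde g-\lambda_n g\bigr|\le\delta(r,n)$ on $B(p,r)$ with $\delta(r,n)\to0$ as $n\to\infty$. If that held, the limiting conformal factor would be constant on every ball of fixed radius, hence globally constant, which is false in general (the derivative of a Riemann map is not constant). What properness actually yields is oscillation of order $C\,r$ on $B(p,r)$, uniformly in $n$ --- equicontinuity, not asymptotic constancy. Finally, your statement that ``passing to the subsequence from the first part, the functions $\lambda_n$ converge pointwise'' does not follow: uniform convergence of $\Phi_n$ gives no control on derivative-level quantities such as the magnification factor. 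One needs a separate compactness argument for the factor functions themselves (Arzel\`a--Ascoli applied to $e^F_n$, as in Corollary \ref{thm:eFn-to-eF}), and then the squeeze of Proposition \ref{thm:main-prop}, with $\epsilon_n\to0$ and $s_m\to0$, to conclude $\Phi_n^*\tilde g\to e^Fg$ in $L^\infty$.

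Your first step is closer to the paper's Theorem \ref{thm:convergenceofmaps} (equicontinuity plus Arzel\`a--Ascoli), but it too has a soft spot: you begin by fixing ``a compact set containing the images,'' which is part of what must be proved. Pointwise boundedness of $\{\Phi_n(p)\}$ comes from condition (3) of Definition \ref{def:admissible-sequence} (the normalization point $x$ with $\{\Phi_n(x)\}$ in a compact set) together with the uniform Lipschitz bound; without invoking it the images could escape to infinity in $\tilde M$. Relatedly, deriving the Lipschitz bound purely from fullness and mesh size requires knowing the image triangles meet a fixed compact subset of $\tilde\Omega$ (the fullness hypothesis for the range is quantified over compact sets), so the argument as written is circular; the paper avoids this by routing the estimate through the uniform bound $H_K$ on the ratio of conformal factors (condition (2)) and the metric estimates of Corollary \ref{thm:bary-estimates} and Theorem \ref{thm:phi-n-metric-estimate}.
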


We choose to present the result using $L^\infty$ convergence on the piecewise flat manifold, similar to the work in \cite{rodinschwarz}. This convergence is essentially the same as convergence with respect to uniform estimates in the interior of triangles that is called uniform convergence in \cite{bucking15}. This convergence is also closely related to the uniform convergence on vertices in \cite{gu_convergence,luo-sun-wu_convergence}.

The theorem relies on an estimate that the pullback
metric $\Phi_n^{*}\tilde{g}$ is close to the metric $e^Fg$, for a certain choice of function $F$, as shown in Proposition \ref{thm:main-prop}.
The maps $\Phi_n$ are constructed from a (piecewise linear) discrete conformal map $\phi_n:\Omega^\Delta_n \to \tilde{\Omega}^\Delta_n$.
The spaces $\Omega^\Delta_n$ and $\tilde{\Omega}^\Delta_n$ have piecewise flat metrics $g^\Delta_n$ and $\tilde{g}^\Delta_n$ determined by the triangulation $T_n$ and edge lengths. The edge lengths are determined by the conformal factors $f_n$ and $\tilde{f}_n$ through the discrete conformal structure (see Definition  \ref{def:disc_conformal}). The piecewise flat spaces are then related to the original manifolds through Riemannian barycentric maps (see Definition \ref{def:riem_barycentric}) $\Psi_n:\Omega^\Delta_n \to \Omega_n$ and  
$\tilde{\Psi}_n:\tilde{\Omega}^\Delta_n \to \tilde{\Omega}_n$. See Figure \ref{fig:Phi-n-first-time} for a commutative diagram.

\begin{figure}[h]
	\centering
	\[ \xymatrix{
		(\Omega_n,g) \ar[r]^{\Phi_n} & (\tilde{\Omega}_n,\tilde{g}) \\
		(\Omega^{\Delta}_n, g^{\Delta}_n) \ar[u]^{\Psi_n}
		\ar[r]^{\phi_n} & (\tilde{\Omega}^{\Delta}_n, \tilde{g}^{\Delta}_n)
		\ar[u]^{\tilde{\Psi}_n}
	}
	\]
	\caption{Definition of $\Phi_n$}
	\label{fig:Phi-n-first-time}
\end{figure}

We estimate the pullback metric $\Phi_n^{*}\tilde{g}$ by first estimating pullback
metrics under the component functions $\Psi_n^{-1}$, $\phi_n$, and
$\tilde{\Psi}_n$ and then combining these estimates into our final statement.
The Riemannian barycentric maps $\Psi_n^{-1}$ and $\tilde{\Psi}_n$ are estimated in \cite{barycentric}. To estimate the pullback metric
across the discrete conformal map $\phi_n$, we estimate edge lengths in the
image in terms of edge lengths in the domain. This result does not depend on
which particular discrete conformal structure we are working with since the
proof uses the general framework set out in Section \ref{sec:discreteconformal map}.

The conditions on an admissible sequence include a rigidity assumption, Local Discrete Conformal Rigidity (LDCR), described in Section \ref{sec:LDCR}. Also important is to rule out degeneracy of the triangulations in the sequence, which is acheived here using uniform estimates on the fullness of the triangulation as described in Section \ref{sec:bary-riemm}.


\section{Discrete conformal maps}
\label{sec:discreteconformal map}

We define the general class of discrete conformal mappings, defined as certain piecewise linear (PL) maps, that 
generalize circle packings and vertex scalings, as described in \cite{zhang2014unified,glickensteinthomas} following previous work (e.g., \cite{glick_angle_variations}). This section mainly takes definitions from \cite{glickensteinthomas}. 

First we remind the reader of the definition of piecewise flat (sometimes called piecewise linear, PL, or piecewise Euclidean if trying to emphasize the difference with piecewise hyperbolic or spherical) manifolds.
\begin{definition}
	A Riemannian manifold $(M,g)$ is said to be \emph{piecewise flat} if $g$ is a piecewise flat Riemannian metric. It is a \emph{triangulated} piecewise flat manifold if it has a triangulation $T$ of $M$ such that the $(M,g)$ is isometric to the gluing of flat simplices according to the triangulation. 
\end{definition}
The geometry of the flat simplices is typically specified by edge lengths, which entirely determine the simplex in Euclidean space up to Euclidean transformation. For this reason, we will often specify the manifold as $(M,T,\ell)$, where $T$ is a triangulation and $\ell$ is the collection of edge lengths. The specifics of gluing simplices to form a manifold is detailed in \cite{thurston_book}. Note that not every collection of edge lengths leads to a manifold, since simplices with given edge lengths can be degenerate, for instance due to a triangle inequality violation. Nondegeneracy of simplices with given edge lengths can be determined using the Cayley-Menger determinant (e.g., \cite{stein_volume_of_simplex}). The reference \cite{cheeger-muller-schrader} gives many properties of piecewise flat manifolds and their curvatures. We will often annotate a manifold and its Riemannian metric with a superscript $\Delta$, when appropriate, to indicate that the manifold is piecewise flat, such as $M^\Delta$ and $g^\Delta$. We also note that piecewise flat surfaces always admit a Delaunay triangulation \cite{bobenko-springborn}.

We can now define discrete conformal structure. For a set $S$, we use $S^*$ to denote the set of functions $S\to \R$, which can be considered a topological space as $\R^{|S|}$. $V=V(T)$ and $E=E(T)$ denote the vertices and edges, respectively, of the triangulation $T$.

\begin{definition}\label{def:disc_conformal}
	Let $(M,T)$ be a triangulated manifold and fix $\alpha\in \R^{|V|}, \eta\in
	\R^{|E|}$. The \emph{discrete conformal structure with Euclidean geometry
		 corresponding to $\alpha,\eta$} is the map $\mathcal{C}_{\alpha,\eta}:
	U\subset V^{*}\to E^{*}$ given by $f\mapsto \ell(f)$, where for each
	edge $[v_i,v_j]$, the edge length $\ell_{ij}$ satisfies
	\begin{align*}
		\ell_{ij}^2 &= \alpha_ie^{2f_i}+\alpha_je^{2f_j}+2\eta_{ij}e^{f_i+f_j}.
	\end{align*}
	The domain $U$ of this map consists of all functions $f\in V^{*}$ such that
	$(M,T,\ell)$ is a piecewise flat manifold. That is, every
	simplex is nondegenerate when the lengths of edges $e_{ij}$ are given by
	$\ell_{ij}(f)$. If $f\in U$, then $f$ is called a \emph{(discrete) conformal factor}. We denote a piecewise flat manifold determined by a discrete conformal structure as $(M,T,\ell(f))$ to indicate that the lengths $\ell$ are determined by the conformal factors $f$.
	
	Two piecewise flat manifolds $(M,T,\ell(f))$ and $(\tilde{M},T,\ell(\tilde{f}))$ with the same triangulation and same conformal structure but possibly different conformal factors are said to be \emph{discrete conformal} to each other. 
\end{definition}

\begin{remark}
	Bistellar (or Pachner) moves such as edge flips do not change the piecewise flat structure but do change the triangulation. In our definition of discrete conformal, we restrict by fixing the triangulation. However, a more general version that allows edge flips, typically to preserve the Delaunay or weighted Delaunay condition, is considered in \cite{bobenko-pinkall-springborn, gu-luo-sun-wu_uniformization}
\end{remark}

The discrete conformal structure will usually be fixed throughout and so will not be specified explicitly. Note that there are similar definitions for hyperbolic and spherical backgrounds. While there are many different discrete conformal structures described in \cite{glick-disc-laplacians}, \cite{zhang2014unified}, \cite{bobenko-lutz}, and others, it was shown in \cite{glickensteinthomas} that each of these can be described as $\mathcal{C}_{\alpha,\eta}$ for some choices of $\alpha$ and $\eta$. 

Given choices of $\alpha$ and $\eta$, the domain $U$ can be empty. This happens, for example, in
the Euclidean background when $\alpha$ and $\eta$ are negative for every vertex
and edge respectively. We always assume that $\alpha$ and $\eta$
are chosen such that the corresponding $U$ contains a nonempty open subset of $V^{*}$.

%

Two of the most important examples of discrete conformal structures are circle packing and vertex scaling.

The circle packing discrete conformal structure has edge lengths defined as sums
of radii of neighboring tangent circles. That is, the length $\ell_{ij}$ of the edge $e_{ij}$
connecting vertices $v_i$ and $v_j$ is $\ell_{ij}=r_i+r_j$, where $r_i,r_j$ are the
radii of circles with centers $v_i,v_j$ respectively.

If we define $f_i:= \log r_i$, then
\begin{align*}
	\ell_{ij}^2
	&=(r_i+r_j)^2 \\
	&= (e^{f_i}+e^{f_j})^2 \\
	&= e^{2f_i}+e^{2f_j}+2e^{f_i+f_j} \\
	&= \alpha_ie^{2f_i}+\alpha_je^{2f_j}+2\eta_{ij}e^{f_i+f_j}
\end{align*}
where $\alpha_i=1$ for every vertex $v_i$ and $\eta_{ij}=1$ for every edge
$e_{ij}$. 
The discrete conformal structure, which could be denoted $C_{1,1}$, is the \emph{circle packing discrete
conformal structure}. The circle packing discrete conformal structure is well-studied in the 
plane, on surfaces, and in dimension 3, for instance \cite{Thurs, circle_packing, cdv, chow-luo, cooper-rivin, glick-yamabe-3d, he-xu}. There are also natural generalizations to 
circles with fixed intersection angle (e.g., \cite{Thurs, marden-rodin, chow-luo}) and fixed inversive distance (e.g., \cite{bowers-stephenson, guo_inversive, luo_rigidity3, XU2018rigidityrevisited, xu_bowers_stephenson, bobenko-lutz, luo2025rigidityinfiniteinversivedistance}).

The following definitions come from \cite{luo-sun-wu_convergence}, but they have
been reworked to fit better with the notation used elsewhere.
The vertex scaling discrete conformal structure has edge lengths defined as
multiples of some base edge length. That is, let $L\in E^{*}$ be a set of
edge lengths such that $(M,T,L)$ is a piecewise flat manifold. A set of edge
lengths $\ell\in E^{*}$ is said to be \emph{related (to $L$) by a vertex
	scaling} if there is some $w\in V^{*}$ such that $\ell_{ij}=
L_{ij}e^{w_i+w_j}$ for every edge $e_{ij}$. In this case, we write $\ell=w*L$.
We see that \[\ell_{ij}^2 = L_{ij}^2e^{2w_i+2w_j},\] so if we define
$f:= 2w$ and let $\alpha_i=0$ for every vertex $v_i$ and $\eta_{ij}= L_{ij}^2/2$
for every edge, then vertex scaling is the discrete conformal structure that could be denoted
$C_{0,L^2/2}$.

This 
discrete conformal structure is called been called the
\emph{perpendicular bisector} discrete conformal structure in \cite{glick_angle_variations} since the duals to
edges are exactly the perpendicular bisectors of those edges.
For more discussion of vertex scaling discrete conformal structures see
\cite{luo_yamabe,  wu-gu-sun_rigidity, gu_convergence,luo-sun-wu_convergence, luo-wu-zhu_convergence,wu-zhu_convergence}, among others.

We can now define a discrete conformal mapping.
\begin{definition}
	\label{def:discreteconfmap}
		Let 
	$(\Omega, T)$ and $(\tilde{\Omega}, T)$ be triangulated piecewise flat manifolds where both have the same triangulation $T$. 
	The piecewise linear mapping $\phi: (\Omega, T)\to (\tilde{\Omega}, T)$ taking each triangle in $\Omega$ to its corresponding triangle in $\tilde{\Omega}$ 
	is called a \emph{discrete conformal map}.
\end{definition}

\begin{remark}
	While it is most common to consider piecewise linear maps, one can also consider projective maps to extend maps at vertices to maps within triangles. This idea is implicit in \cite{he-schramm96} for circle packing and explicitly described in \cite{bobenko-pinkall-springborn}.
\end{remark}

\section{Convergence of domain and range via Riemannian barycentric coordinates}
\label{chap:barycentric}

Riemannian barycentric coordinates will be used to show that domains and ranges converge. The discussion in this section closely follows \cite{barycentric}.

\subsection{Parameterizing Euclidean Simplices}
\label{sec:bary-eucl}

Recall the standard simplex $\Delta$ and the unit simplex $D$.
\begin{definition}
	\label{def:std-unit-simplices}
	Let $\{e_0,\dots, e_k\}$ be the standard basis of $\R^{k+1}$.
	The \emph{standard simplex $\Delta\subset \R^{k+1}$} is the convex hull of the
	points $\{e_0,\dots, e_k\}$. This simplex can also be written as
	\[\Delta= \left\{ \lambda\in \R^{k+1}\,:\,
	\sum_{i=0}^k\lambda^i=1\,\mathrm{and}\, \lambda^i\geq 0\,\text{for every $i$}
	\right\}. \]
	
	Let $\{E_1,\dots, E_k\}$ be the standard basis of $\R^{k}$.
	The \emph{unit simplex $D \subset \R^k$} is the convex hull of the points $\{0,E_1,\dots, E_k\}$.
\end{definition}

Given a collection of points $p_0, \ldots, p_n \in \R^N$, the (possibly degenerate) Euclidean $n$-simplex
$\sigma$ given by their convex hull can be parameterized using either the
standard simplex $\Delta$ or the unit simplex $D$. The two parameterizations are
the following:
\begin{equation}
	\label{eq:eucl-delta-param}
	x: \Delta\to \sigma, \qquad x(\lambda) = \sum_{i=0}^n\lambda^ip_i,
\end{equation}
\begin{equation}
	\label{eq:eucl-D-param}
	y: D\to \sigma, \qquad y(u) = Au+p_0,
\end{equation}
where $A$ is the matrix whose $i$th column is $p_i-p_0$.
Notice that $x$ is the usual barycentric map.

Let $g$ be the Euclidean metric on $\R^m$. Since the simplex $\sigma$ is a
subset of $\R^m$, the restriction $g|_{\sigma}$ is a metric on $\sigma$. We can
pull this metric back to either $\Delta$ or $D$ using $x$ or $y$ respectively.

We first consider the pullback metric $x^{*}g$ on $\Delta$. Let $v,w\in
T_{\lambda}\Delta$ for some $\lambda\in \Delta$. Then $x^{*}g$ is given by
\[ \left\langle v,w \right\rangle_{x^{*}g} = \left\langle \sum_{i=0}^n v^ip_i,
\sum_{j=0}^n w^jp_j \right\rangle_{\R^m}.\]
Any tangent vector $v\in T_{\lambda}\Delta$ lies in the hyperplane
$\sum_{i=0}^nx^i=0$, so both $v$ and $w$ are such that $\sum v^i=\sum w^i=0$. We
can therefore rewrite $\left\langle v,w \right\rangle_{x^{*}g}$
as
\begin{equation}
	\label{eq:metric-bary}
	\left\langle v,w \right\rangle_{x^{*}g} = -\frac{1}{2}\sum_{i,j=0}^n
	|p_i-p_j|^2_{\R^m}v^iw^j,
\end{equation}
since $|p_i-p_j|=\ell_{ij}$, the length of
the edge $e_{ij}$ connecting the vertices $p_i$ and $p_j$.

Next we use $y$ to pull $g$ back to $D$. We have:
\begin{align}
	\left\langle v,w \right\rangle_{y^{*}g}
	&= \left\langle Av,Aw \right\rangle_g \\
	&= \sum_{i,j=1}^n \left\langle p_i-p_0, p_j-p_0 \right\rangle_{\R^m}v^iw^j.
\end{align}
So this metric can be written as a matrix
	\[  
\begin{pmatrix}
	\ell_{01}^2 & -\ell_{01}\ell_{01}\cos \phi_{12} \\[.75em]
	-\ell_{01}\ell_{02}\cos \phi_{12} & \ell_{02}^2
\end{pmatrix},\]
	where $\phi_{12}$ is the angle between $p_1-p_0$ and $p_2-p_0$ in triangle $\{p_0,p_1,p_2\}$.


Parameterizing by $\Delta$ gives us a symmetric formulation where none of the
vertices are privileged, but it has the downside that the metric $x^{*}g$ is not
determined in the direction perpendicular to the tangent plane
$T_{\lambda}\Delta$. On the other hand, parameterizing by $D$ privileges the
vertex $p_0$, but the metric $y^{*}g$ is determined in all directions. Which
parameterization we use will be a matter of convenience.

\subsection{Parameterizing Riemannian Simplices}
\label{sec:bary-riemm}

Next we parameterize Riemannian simplices using $\Delta$. In order to
see how to do this, let us examine Euclidean barycentric coordinates a little
more carefully.
The Euclidean barycentric map $x: \Delta\to \sigma$ is defined by
$x(\lambda)=\sum_i \lambda^ip_i$. This definition does not generalize directly to Riemannian manifolds, so we
need an alternative.

Consider the function
\[ E_{\lambda}(a) := \sum_{i=0}^n\lambda^i|a-p_i|^2_{\R^m}. \] We can calculate
partial derivatives of this function as follows:
\begin{align*}
	\frac{\partial E_{\lambda}}{\partial a^j}
	&= \sum_{i=0}^n \lambda^i \frac{\partial}{\partial a_j} \left( |a-p_i|^2_{\R^m} \right) \\
	&= 2\sum_{i=0}^n \lambda^i (a^j-p_i^j) \\
	&= 2 \left( a^j -x^j(\lambda) \right).
\end{align*}
The last equality follows from the fact that $\sum_i\lambda^i=1$ for any
$\lambda\in \Delta$. Hence $a=x(\lambda)$ is a critical point of $E_{\lambda}$.
Further, the Hessian $H_{E_{\lambda}}$ is very simple, being twice the identity
matrix. Thus the Hessian is positive definite and hence $x(\lambda)$ is the
minimizer of the function $E_{\lambda}(a)$. We use a generalized version of
$E_{\lambda}(a)$ to define Riemannian barycentric coordinates.

Let $(M,g)$ be a complete $m$-dimensional Riemannian manifold with $m>1$ and let
$\Delta$ be the $n$-dimensional standard simplex \[\Delta=
\left\{ \lambda\in \R^{n+1}\, : \, \lambda^i\geq
0, \sum_i \lambda^i=1 \right\}.\] Let $p_0,\dots, p_n$ be points in $M$ and
consider the function $E:M\times \Delta\to \R $ given by
\begin{equation}
	\label{eq:E}
	E(a,\lambda) = \sum_{i=0}^n \lambda^i d_g^2(a,p_i),
\end{equation}
where $d_g$ is the Riemannian distance function on $M$. Once we are in the
Riemannian setting, the function $E(\cdot, \lambda)$ may not have a minimizer or
it may have multiple. However, the following proposition says that as long as we
restrict our attention to a small enough subset of $M$, minimizers exist and are
unique.

\begin{proposition}[Proposition 13 of \cite{barycentric}]
	\label{thm:bary-prop13}
	If the points $p_i$ lie in a ball whose radius is less than half the convexity
	radius, then $E(\cdot,\lambda)$ has a unique minimizer.
\end{proposition}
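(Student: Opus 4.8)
\section*{Proof proposal (plan)}

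The plan is to reduce the statement to the strict geodesic convexity of the squared distance function on balls inside the convexity radius. Abbreviate $E_\lambda := E(\cdot,\lambda)$, let $r_c$ denote the convexity radius of the relevant region, and suppose the $p_i$ all lie in the open ball $B(x_0,\rho)$ with $\rho$ less than $r_c/2$, so that $2\rho < r_c$. Then the closed ball $\bar{B} := \bar{B}(x_0,\rho)$ is geodesically convex and compact, and since $d_g(x_0,p_i) < \rho$ it is contained in $B(p_i,r_c)$ for every $i$. On $B(p_i,r_c)$ the function $a \mapsto d_g^2(a,p_i)$ is smooth (we are well inside the injectivity radius) and strictly geodesically convex; this is the classical Hessian estimate for the distance function, coming from the second variation formula together with Jacobi field (Rauch) comparison. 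Because the weights satisfy $\lambda^i \ge 0$ and $\sum_i \lambda^i = 1$, the function $E_\lambda = \sum_i \lambda^i d_g^2(\cdot,p_i)$ is a convex combination of strictly geodesically convex functions and is therefore strictly geodesically convex on the geodesically convex set $\bar{B}$.

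For existence I would argue by compactness: $E_\lambda$ is continuous and $\bar{B}$ is compact, so $E_\lambda$ attains a minimum over $\bar{B}$ at some point $a^{*}$, and by strict convexity this is the \emph{unique} minimizer of $E_\lambda$ on $\bar{B}$. To see that $a^{*}$ in fact minimizes $E_\lambda$ over all of $M$, split into two cases. If $d_g(b,x_0) \ge r_c$, then $d_g(b,p_i) \ge d_g(b,x_0) - \rho > r_c - \rho > \rho$ for every $i$, so $E_\lambda(b) \ge (r_c-\rho)^2 > \rho^2 > \sum_i \lambda^i d_g^2(x_0,p_i) = E_\lambda(x_0) \ge E_\lambda(a^{*})$, and $b$ is not a minimizer. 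If instead $b$ lies in the convex ball $B(x_0,r_c)$, then the nearest-point projection $q$ of $b$ onto the closed convex subset $\bar{B}$ is well defined and is distance non-increasing toward points of $\bar{B}$, so $d_g(q,p_i) \le d_g(b,p_i)$ for each $i$ and hence $E_\lambda(q) \le E_\lambda(b)$. Combining the two cases gives $\inf_M E_\lambda = \min_{\bar{B}} E_\lambda = E_\lambda(a^{*})$, so $a^{*}$ is the unique minimizer of $E_\lambda$ over $M$. (A short first-variation computation along the geodesic from a boundary point of $\bar{B}$ toward $x_0$, using strict convexity of $d_g^2(\cdot,x_0)$, shows in addition that $a^{*}$ lies in the open ball $B(x_0,\rho)$; this is useful later but not needed for the statement.)

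Uniqueness is then immediate: two minimizers of $E_\lambda$ over $M$ both lie in $\bar{B}$, and the minimizing geodesic joining them stays in $\bar{B}$, along which $E_\lambda$ is strictly convex, so the value at the midpoint would be strictly smaller unless the two points coincide. The step I expect to be the main obstacle is the strict geodesic convexity of $a \mapsto d_g^2(a,p_i)$ up to the convexity radius: this is the quantitative Hessian comparison for the distance function, and it is precisely where the hypothesis ``less than half the convexity radius'' enters, the factor $\tfrac12$ simultaneously making $\bar{B}$ geodesically convex and placing it inside each $B(p_i,r_c)$. Once that input, and the accompanying standard facts about convex balls within the convexity radius and their metric projections, are in hand, the convexity argument above is routine.
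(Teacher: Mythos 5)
The paper does not actually prove this statement; it is quoted verbatim as Proposition 13 of \cite{barycentric}, so the benchmark is the standard Karcher-type convexity argument used there and in the center-of-mass literature (Karcher, Kendall, Afsari). Your plan is that argument in outline, but the two facts you treat as off-the-shelf inputs are exactly where the content lies, and one of them is false as stated. The claim that the nearest-point projection $q$ of $b$ onto the closed convex ball $\bar{B}$ satisfies $d_g(q,p_i)\le d_g(b,p_i)$ for every $p_i\in\bar{B}$ is a CAT(0)/nonpositive-curvature fact; in a general Riemannian surface or manifold, first variation only gives that the geodesics $[q,b]$ and $[q,p_i]$ make an angle $\ge \pi/2$ at $q$, and to upgrade this to $d_g(b,p_i)\ge d_g(q,p_i)$ one needs a hinge (Toponogov/CAT($\kappa$)) comparison with an upper curvature bound $\kappa$ on the ball \emph{together with} the smallness condition that the relevant distances are below $\pi/(2\sqrt{\kappa})$ --- and that smallness does not follow from ``radius less than half the convexity radius'' unless the definition of convexity radius in \cite{barycentric} already encodes convexity of distance functions. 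Nor can you simply drop the projection step: the elementary sublevel argument ($E_\lambda(b)\le E_\lambda(x_0)<\rho^2$ forces $d_g(b,p_i)<\rho$ for some $i$) only localizes global minimizers in $B(x_0,2\rho)$, where distances to the $p_i$ can reach $3\rho$, possibly beyond $r_c$, so the convexity you invoke is not available there either.

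The same issue affects your other key input: with the usual ball-convexity definition of the convexity radius one gets convexity of $d_g(p_i,\cdot)$ on $B(p_i,r_c)$, hence convexity of its square, but \emph{strict} convexity in directions orthogonal to the radial one requires the Hessian comparison $\operatorname{Hess}\tfrac12 d_g^2(\cdot,p_i) \ge \sqrt{\kappa}\, d\cot(\sqrt{\kappa}\, d)\, g$, which is positive only for $d<\pi/(2\sqrt{\kappa})$ --- again a smallness condition not visibly implied by the hypothesis. These comparison estimates (positivity of the Hessian on the region containing all candidate minimizers, and Karcher's estimate that $\operatorname{grad}E_\lambda$ points outward on the boundary sphere $S(x_0,\rho)$, which is what actually localizes the minimizer in the ball) are the substance of the cited proof. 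To make your proposal complete you must either carry out these comparisons explicitly, using compactness of the closed ball to get $\kappa$ and verifying the needed smallness from the convexity-radius hypothesis (or from the definition of convexity radius adopted in \cite{barycentric}), or cite them precisely; the remaining compactness-plus-strict-convexity bookkeeping in your write-up is fine and is indeed the same skeleton as the classical argument.
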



This point is called the Riemannian center of mass or Karcher mean. More information on these can be found in \cite{karcher77, buser-karcher, kendall, karcher14}.
%
%
%
%

We are interested in a sequence of triangulations
where the maximum edge length is going to zero, so we may as well assume that
every simplex in every one of our triangulations lies inside a ball small enough
to be a geodesic ball. Hence the Riemannian center of mass always exists and is unique.

We can now define the (Riemannian) barycentric mapping, as follows:

\begin{definition} \label{def:riem_barycentric}
	For a given $\lambda\in \Delta$, let $\Psi(\lambda)$ be the minimizer of
	$E(\cdot, \lambda)$. We call $\Psi$ the \emph{(Riemannian) barycentric
		mapping} with respect to vertices $p_0,\dots, p_n$. Its image in $M$ is
	called the corresponding \emph{Karcher simplex}.
\end{definition}

It is shown in \cite{karcher77} that local minimizers of $E(\cdot,\lambda)$ for
fixed $\lambda$ are zeros of the section $F:M\times \Delta\to TM$ given by
\begin{equation}\label{eq:centerofmass} 
	F(a,\lambda) = \sum_{i=0}^n \lambda^iX_i|_a, \qquad \text{where}\quad X_i(a)=
	\frac{1}{2}\mathrm{grad}_a \, d^2(a,p_i) = \exp^{-1}_{a}p_i.
\end{equation}

Notice that if $\lambda^i=0$ for some $i$, then the value of $\Psi(\lambda)$ is
independent of $p_i$. Hence each facet of $\Delta$ is mapped to a Karcher
subsimplex and this subsimplex only depends on the vertices it contains.
Furthermore, since $\Psi$ is continuous, the Karcher subsimplices form the
boundary of a Karcher simplex. That is, $\partial(\Psi(\Delta)) =
\Psi(\partial\Delta)$.

%

\begin{lemma}
	\label{thm:Psi-diffeo-closed-simplices}
	Let $\Delta$ be the (closed) standard $k$-simplex and suppose the Riemannian
	barycentric map $\Psi:\Delta\to M$ has image inside a convex subset of $M$. Then $\Psi$ is a diffeomorphism onto its image.
\end{lemma}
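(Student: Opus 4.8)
The plan is to show that $\Psi$ is an immersion everywhere on the closed simplex $\Delta$ and that it is injective; since $\Delta$ is compact, injectivity plus the immersion property gives a diffeomorphism onto its image. I would begin with the interior. On the open simplex, $\Psi$ is defined implicitly by $F(\Psi(\lambda),\lambda)=0$, where $F$ is the section in \eqref{eq:centerofmass}. The derivative $D_a F$ at a zero is, by Karcher's computation, essentially $\sum_i \lambda^i D_a X_i$, and each $D_a X_i = D_a \exp_a^{-1} p_i$ is close to $-\mathrm{Id}$ when $p_i$ is near $a$ (exactly $-\mathrm{Id}$ in the flat case); so for points in a ball inside the convexity radius this operator is negative definite, hence invertible. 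The implicit function theorem then gives that $\Psi$ is smooth on the open simplex, and differentiating $F(\Psi(\lambda),\lambda)=0$ expresses $d\Psi_\lambda$ in terms of $(D_aF)^{-1}$ composed with the $\lambda$-derivative of $F$; the latter is the map sending a tangent direction $v$ to $\sum_i v^i X_i|_{\Psi(\lambda)} = \sum_i v^i \exp^{-1}_{\Psi(\lambda)} p_i$. Using that $v$ lies in the hyperplane $\sum v^i = 0$, this is a small perturbation of the Euclidean barycentric derivative $v \mapsto \sum_i v^i (p_i - p_0)$, which is injective precisely when the $p_i$ span a nondegenerate simplex; the perturbation is controlled by the same convexity-radius bound, so $d\Psi_\lambda$ is injective.

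Next I would handle the boundary facets. By the remark preceding the lemma, $\Psi$ restricted to a facet of $\Delta$ is itself a Riemannian barycentric map for the vertices it contains, so by induction on dimension each boundary facet is immersed, and $\partial(\Psi(\Delta)) = \Psi(\partial\Delta)$. The only subtlety is transversality: one must check that at a boundary point the image of $d\Psi$ is not tangent to the image of the facet, i.e. that the derivative in the inward normal direction to $\Delta$ points out of the facet's tangent space. This again reduces, via the implicit-function formula above, to the analogous Euclidean statement (the vertex $p_i$ opposite the facet does not lie in the affine span of the facet), perturbed by a term bounded by the convexity radius. So $\Psi$ is an immersion on all of the closed $\Delta$.

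For injectivity I would argue by contradiction using compactness. If $\Psi$ is not injective there are distinct $\lambda \neq \mu$ with $\Psi(\lambda) = \Psi(\mu) = q$. Since $q$ lies in a convex subset $U$ of $M$, the function $a \mapsto d_g^2(a,p_i)$ is strictly convex along geodesics in $U$, hence $E(\cdot,\nu)$ is strictly convex in $U$ for each fixed $\nu \in \Delta$; this is what makes the minimizer $\Psi(\nu)$ unique and is already implicit in Proposition \ref{thm:bary-prop13}. The key point is to reverse the roles: consider the function of $\nu$, or better, exploit that the two distinct barycenters $\Psi(\lambda)$ and $\Psi(\mu)$ cannot coincide because of a monotonicity/strict-convexity relation in the $\lambda$ variable — concretely, $\nabla_\lambda$ of the value function $v(\lambda) := \min_a E(a,\lambda) = E(\Psi(\lambda),\lambda)$ has components $d_g^2(\Psi(\lambda),p_i)$, and strict convexity of this value function (which follows from strict convexity of $E$ jointly, again controlled inside the convexity radius) forces $\Psi$ to be injective. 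Alternatively, one can use that the immersion $\Psi$ together with $\partial(\Psi(\Delta)) = \Psi(\partial\Delta)$ and a degree argument shows $\Psi$ is a covering of its image, which being simply connected forces injectivity.

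The main obstacle I expect is the boundary transversality step: ensuring that the immersion property extends up to and across the faces of $\Delta$, rather than just holding on the open simplex, and doing so with estimates uniform enough that they follow purely from the convexity-radius hypothesis. The interior immersion and the injectivity argument are fairly standard consequences of strict convexity of $E$, but carefully showing that $d\Psi$ stays injective at corner points — where several faces of $\Delta$ meet and several barycentric coordinates vanish simultaneously — is where the bookkeeping is most delicate, and I would lean on the inductive structure (facets are themselves Karcher simplices) together with the Euclidean model computation of Section \ref{sec:bary-eucl} to organize it.
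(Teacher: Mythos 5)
Your overall plan (prove $\Psi$ is an injective immersion of the closed simplex, getting smoothness across $\partial\Delta$ from the implicit function theorem applied to the section $F$ of \eqref{eq:centerofmass}) is much more ambitious than the paper's argument, which consists of two sentences: the interior statement is taken as known (it is contained in the estimates of \cite{barycentric}, i.e.\ Theorem \ref{thm:barycentric-thm2}, which make $\Psi^{*}g$ uniformly close to the nondegenerate flat metric $g^{\Delta}$ and hence make $d\Psi$ injective), and the only point argued is smoothness on $\partial\Delta$ via the implicit function theorem --- exactly the mechanism you use. So your boundary-smoothness core coincides with the paper's; the rest of your proposal is an attempt to re-derive what the paper imports or leaves implicit, and it is there that two genuine problems appear.

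First, the immersion step: injectivity of $v\mapsto\sum_i v^i X_i|_{\Psi(\lambda)}=\sum_i v^i\exp^{-1}_{\Psi(\lambda)}p_i$ on $\{\sum_i v^i=0\}$ is \emph{not} ``controlled by the same convexity-radius bound.'' Convexity gives negative definiteness of $D_aF$ (so $\Psi$ is smooth), but the closeness of $v\mapsto\sum_i v^i\exp^{-1}_a p_i$ to the Euclidean model $v\mapsto\sum_i v^i(p_i-p_0)$ must be measured against how degenerate the comparison simplex is: a thin but nondegenerate simplex loses injectivity of $d\Psi$ under arbitrarily small distortion, and the distortion is controlled by curvature bounds and edge length, not by the convexity radius alone. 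This is precisely the role of $(\vartheta,\epsilon)$-fullness in Theorem \ref{thm:barycentric-thm2}, which you should invoke (and which also disposes of your corner/transversality worry, since the metric estimate holds on the closed simplex). Second, your injectivity argument is wrong as stated: $E(a,\lambda)$ is affine in $\lambda$, so it is not (strictly) jointly convex, and the value function $v(\lambda)=\min_aE(a,\lambda)$, being a minimum of affine functions of $\lambda$, is \emph{concave}, not strictly convex. The envelope idea can be repaired: $\partial v/\partial\lambda^i=d_g^2(\Psi(\lambda),p_i)$, and if $\Psi(\lambda)=\Psi(\mu)=q$ with $\lambda\neq\mu$, then the affine function $\nu\mapsto E(q,\nu)$ dominates the concave $v$ and agrees with it at $\lambda$ and $\mu$, hence on the whole segment, so by uniqueness of minimizers (Proposition \ref{thm:bary-prop13}) $\Psi$ is constant on $[\lambda,\mu]$, contradicting the immersion property. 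Your alternative covering/degree argument also has an unproved step (simple connectivity of the image is not established). With these repairs your route works, but as written the injectivity step does not.
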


\begin{proof}
	Clearly $\Psi$ is a diffeomorphism on the interior of $\Delta$. All that
	remains is to show that $\Psi$ is smooth on $\partial \Delta$. This follows from its formulation as the zero set of $F$ in (\ref{eq:centerofmass}) and the implicit function theorem.
%
%
%
%
\end{proof}

The main result of \cite{barycentric} is Theorem \ref{thm:barycentric-thm2}
below, which gives estimates of the pullback metric $\Psi^{*}g$ on $\Delta$ in
terms of a different metric, $g^\Delta$, defined by
\[ \left\langle v,w \right\rangle_{g^\Delta} := - \frac{1}{2} \sum_{i,j=0}^n
d^2_g(p_i,p_j)\, v^iw^j, \] for all $v,w\in T_{\lambda}\Delta$.

Note that if we take a Euclidean $n$-simplex $\sigma$ whose edge lengths
$\ell_{ij}$ are determined by the corresponding geodesic edge length in $M$,
$\ell_{ij}:= d_g(p_i,p_j)$, then $g^\Delta$ is the pullback to $\Delta$ of the
induced Euclidean metric on $\sigma$. We frequently refer
to the induced Euclidean metric on $\sigma$ as $g^{\Delta}$. 

\begin{remark}
	Note that we consider the underlying simplex $\Delta$ as the coordinate and then use different flat metrics on $\Delta$ rather than considering the non-regular simplex as the coordinate itself. This has certain advantages as we change the metrics. We may also pull back the metric to the unit simplex $D$ when needed, for instance in Section \ref{sec:eF}.
\end{remark}

Note $g^\Delta$ is not a metric for every set of edge lengths we could choose. A
necessary and sufficient condition for $g^\Delta$ to yield a metric is that
there is a nondegenerate Euclidean simplex $\sigma$ with edge lengths given by
$\{\ell_{ij}\}$.
In fact, we will assume our triangulations satisfy a slighter stronger
condition, namely, every Euclidean simplex $(\Delta,g^{\Delta})$ must be
$(\vartheta,\epsilon)$-full, according to the following definition.

\begin{definition}
	\label{def:fullness}
	A $n$-simplex $\sigma$ with Riemannian metric $g$ is
	\emph{$(\vartheta,\epsilon)$-full} if all edges have length less than or equal
	to $\epsilon$ and \[ n! \Vol_g(\sigma) \geq \vartheta \epsilon^n, \]
	where $\Vol_g(\sigma)$ is the Riemannian volume.
	
	If $T$ is a triangulation such that every simplex in $T$ is
	$(\vartheta,\epsilon)$-full, then we say that $T$ is a
	\emph{$(\vartheta,\epsilon)$-full triangulation}.
\end{definition}

\begin{remark}
	In Euclidean space it is sufficient in the above definition to check that the highest dimensional simplices are $(\vartheta, \epsilon)$-full. If we let $\sigma^{n-1}$ be a $(n-1)$-simplex that is part of a $(\vartheta, \epsilon)$-full $n$-simplex $\sigma^n$ then 
	\[
		n!\frac{1}{n} \epsilon \Vol(\sigma^{n-1}) \geq n! \Vol(\sigma^n) \geq \vartheta \epsilon^n
	\]
	implies
	\[
		(n-1)! \Vol(\sigma^{n-1}) \geq	\vartheta \epsilon^{n-1}.
	\]
	Note that in Theorem \ref{thm:barycentric-thm2} and Corollary \ref{thm:bary-estimates} the fullness assumption is only on the piecewise flat manifolds.
\end{remark}

In the following theorem, $n$ is the dimension of the simplex and let $C_0,C_1$
be constants such that $|\Rm|_g \leq C_0$ and $|\nabla \Rm|_g \leq C_1$ where $\Rm$ is the Riemannian curvature of the  manifold $M$. See
\cite{barycentric} for more discussion and a proof.

\begin{theorem}[Theorem 2 of \cite{barycentric}]
	\label{thm:barycentric-thm2}
	There exist constants $\alpha=\alpha(n,\vartheta,C_0,C_1)$, $\beta=
	\beta(n,\vartheta,C_0)$, and $\gamma=\gamma(n,\vartheta,C_0,C_1)$ such that if
	$\epsilon<\alpha$ and $(\Delta, g^\Delta)$ is a $(\vartheta,\epsilon)$-full
	simplex then
	\[ |(\Psi^{*}g-g^\Delta)(v,w)|\leq \beta \epsilon^2
	|v|_{g^\Delta}|w|_{g^\Delta}, \] and
	\[ |\nabla^e \Psi^{*}g(u,v,w)|\leq \gamma
	\epsilon|u|_{g^\Delta}|v|_{g^\Delta}|w|_{g^\Delta} \] for tangent vectors
	$u,v,w\in T_{\lambda}\Delta$ at any $\lambda\in \Delta$.
\end{theorem}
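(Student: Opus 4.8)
The plan is to compare the Riemannian barycentric map $\Psi$ with its Euclidean model, the affine barycentric map $x(\lambda)=\sum_i\lambda^ip_i$, working in geodesic normal coordinates and absorbing all discrepancies into comparison-geometry error terms controlled by $C_0$ and $C_1$. Fix $\lambda\in\Delta$, set $q=\Psi(\lambda)$, and take normal coordinates centered at $q$ on a ball of radius $\asymp\epsilon$; after shrinking $\alpha$ (depending on $n,\vartheta,C_0,C_1$) all of the $p_i$, and the piece of $\Delta$'s image we care about, lie safely inside this ball, where the coordinates are a diffeomorphism and $\Psi$ is smooth (by Proposition~\ref{thm:bary-prop13} and the implicit function theorem, cf. Lemma~\ref{thm:Psi-diffeo-closed-simplices}). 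The only ambient facts needed are the standard Jacobi-field estimates in these coordinates: $g_{ab}=\delta_{ab}+O(C_0\epsilon^2)$ with $\partial g=O(C_0\epsilon)$ and $\partial^2 g=O(C_0+C_1\epsilon)$; the squared distance $d_g^2(p_i,p_j)=|\hat p_i-\hat p_j|^2+O(C_0\epsilon^4)$, where $\hat p$ denotes the coordinate vector; and, for the field $X_i(a)=\exp_a^{-1}p_i$ of \eqref{eq:centerofmass}, the facts $X_i(q)=\hat p_i$, $\nabla_aX_i=-\mathrm{Id}+O(C_0\epsilon^2)$ (Hessian comparison for $\tfrac12 d^2(\cdot,p_i)$), and $\nabla_a^2X_i=O(C_0+C_1\epsilon)$.

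I would first record the one place fullness enters: translating Definition~\ref{def:fullness} via the Cayley–Menger determinant, a $(\vartheta,\epsilon)$-full Euclidean simplex $(\Delta,g^\Delta)$ satisfies $c_1(n,\vartheta)\,\epsilon^2|v|^2\le |v|_{g^\Delta}^2\le c_2(n)\,\epsilon^2|v|^2$ for all $v\in T_\lambda\Delta$, where $|v|$ is the Euclidean norm of the coefficient vector; the upper bound is immediate from edge lengths $\le\epsilon$, and the lower bound is exactly the volume lower bound, bounding below the least eigenvalue of the Gram matrix that represents $g^\Delta$. This is what lets me later absorb errors of the form $O(\epsilon^4)|v||w|$ into $O(\epsilon^2)|v|_{g^\Delta}|w|_{g^\Delta}$, and it is why uniform fullness — not mere nondegeneracy — is needed for the constants to be uniform in $\lambda$ and in the simplex.

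Next, differentiate $F(\Psi(\lambda),\lambda)=0$, with $F=\sum_i\lambda^iX_i$, to obtain $d\Psi_\lambda=-(\mathrm{D}_aF)^{-1}\circ\mathrm{D}_\lambda F$. At $(q,\lambda)$ we have $\mathrm{D}_aF=\sum_i\lambda^i\nabla_aX_i=-\mathrm{Id}+O(C_0\epsilon^2)$, invertible (no fullness needed, since $\sum_i\lambda^i=1$) with inverse $-\mathrm{Id}+O(C_0\epsilon^2)$, while for $v\in T_\lambda\Delta$ (so $\sum_iv^i=0$) one gets $\mathrm{D}_\lambda F\cdot v=\sum_iv^iX_i(q)=\sum_iv^i\hat p_i$; hence $d\Psi_\lambda v=\sum_iv^i\hat p_i+\delta(v)$ with $|\delta(v)|\le cC_0\epsilon^2\,|\sum_iv^i\hat p_i|$. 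Since the coordinates are normal at $q$, $\Psi^*g(v,w)|_\lambda=\langle d\Psi_\lambda v,d\Psi_\lambda w\rangle_\delta$; the leading term $\langle\sum v^i\hat p_i,\sum w^j\hat p_j\rangle_\delta=-\tfrac12\sum_{i,j}|\hat p_i-\hat p_j|^2v^iw^j$ by the Euclidean identity \eqref{eq:metric-bary}, and substituting $|\hat p_i-\hat p_j|^2=d_g^2(p_i,p_j)+O(C_0\epsilon^4)$ turns it into $g^\Delta(v,w)+O(C_0\epsilon^4)\sum_{i,j}|v^iw^j|$; the $\delta$-terms contribute $O(C_0\epsilon^2)|\sum v^i\hat p_i||\sum w^j\hat p_j|$. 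Using $|\sum v^i\hat p_i|^2=|v|_{g^\Delta}^2+O(C_0\epsilon^4)|v|^2$ together with the fullness bounds, every error is at most $\beta(n,\vartheta,C_0)\,\epsilon^2|v|_{g^\Delta}|w|_{g^\Delta}$, which is the first estimate.

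For the covariant-derivative estimate, observe that $g^\Delta$ has constant coefficients in affine coordinates on $\Delta$, so $\nabla^e$ is the ordinary coordinate derivative there and it suffices to bound $\partial_\lambda(\Psi^*g)$. I would differentiate the chain of identities above once more in $\lambda$: this produces the second variation of $\Psi$, obtained by differentiating $F(\Psi(\lambda),\lambda)=0$ twice (so $\nabla_a^2X_i$, hence $C_1$, enters), together with the $\lambda$-derivative of $g$ along $\Psi$, which is $O(C_0\epsilon)$ in the normal frame; a count of powers of $\epsilon$ — one derivative, at length scale $\epsilon$ on $(\Delta,g^\Delta)$, of the $O(\epsilon^2)$-sized object $\Psi^*g-g^\Delta$ — gives the claimed $O(\epsilon)$, and the fullness bounds convert Euclidean norms to $g^\Delta$-norms, yielding $|\nabla^e\Psi^*g(u,v,w)|\le\gamma(n,\vartheta,C_0,C_1)\,\epsilon\,|u|_{g^\Delta}|v|_{g^\Delta}|w|_{g^\Delta}$. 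I expect this last step to be the main obstacle: the normal-coordinate basepoint $q=\Psi(\lambda)$ moves with $\lambda$, so differentiating $\Psi^*g$ mixes the variation of $d\Psi$ with that of the frame and of $g$ along the image; organizing the third-order Taylor data of the squared-distance function and of $\Psi$ so the $\nabla\Rm$-terms appear cleanly, and checking that every implied constant depends only on $n,\vartheta,C_0,C_1$, is the technical core. Complete details are carried out in \cite{barycentric}.
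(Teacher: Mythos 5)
The paper itself does not prove this statement: Theorem \ref{thm:barycentric-thm2} is imported verbatim as Theorem 2 of \cite{barycentric}, and the reader is referred there for the proof. So there is no internal argument to compare yours against; the relevant comparison is with the cited source, whose strategy --- the implicit function theorem applied to $F(\Psi(\lambda),\lambda)=0$ from \eqref{eq:centerofmass}, comparison estimates for $\exp_a^{-1}p_i$ and the Hessian of $\tfrac12 d^2(\cdot,p_i)$ in normal coordinates, and fullness used to bound the eigenvalues of the Gram matrix representing $g^\Delta$ from below (this is exactly Lemma \ref{thm:barycentric-lemma3}) --- is the one you outline. For the zeroth-order estimate your sketch is essentially complete: $d\Psi_\lambda v=\sum_i v^i\hat p_i+O(C_0\epsilon^2)\left|\sum_i v^i\hat p_i\right|$, the leading term reproduces $g^\Delta(v,w)$ via \eqref{eq:metric-bary} up to $O(C_0\epsilon^4)|v|\,|w|$ from the distance comparison, and the fullness lower bound $|v|_{g^\Delta}\geq \vartheta n^{1-n}\epsilon|v|$ converts both error terms into $\beta\epsilon^2|v|_{g^\Delta}|w|_{g^\Delta}$, as required.

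The derivative estimate is where your write-up has a genuine gap, and you flag it yourself: ``a count of powers of $\epsilon$'' is not an argument, since an $O(\epsilon^2)$ bound on the tensor $\Psi^{*}g-g^\Delta$ gives no control on $\nabla^e\Psi^{*}g$ by itself. One must actually differentiate the implicit-function identity a second time --- which brings in $\nabla^2_aX_i$, hence $C_1$ and $\nabla\Rm$, together with the variation of the normal frame as the basepoint $q=\Psi(\lambda)$ moves --- and then redo the norm conversion with the fullness eigenvalue bound; none of that is carried out, only asserted to work with the details deferred to \cite{barycentric}. Given that the paper treats the theorem as a black box from that reference, leaning on \cite{barycentric} is consistent with the paper's own treatment; but as a standalone proof, your proposal establishes the first inequality and only a plausibility argument for the second.
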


The following is a direct consequence of Theorem \ref{thm:barycentric-thm2}.
\begin{corollary}
	\label{thm:bary-estimates}
	Let $(M,g)$ be a Riemannian manifold and let $(\Delta, g^{\Delta})$ be a
	$(\vartheta, \epsilon)$-full simplex, where $\epsilon$ is small enough so that
	the Riemannian barycentric map $\Psi: \Delta\to M$ is smooth (see Theorem \ref{thm:barycentric-thm2}). Then there
	exists a constant $\beta=\beta(n,\vartheta,C_0)$ such that
	\begin{equation}
		\label{eq:Psi-estimate}
		(1-\beta\epsilon)\left| X \right|^2_{g^{\Delta}}\leq |X|^2_{\Psi^{*}g}
		\leq (1+\beta\epsilon) \left| X \right|^2_{g^{\Delta}}
	\end{equation}
	for $X\in T_{\lambda}\Delta$ and
	\begin{equation}
		\label{eq:Psi^-1-estimate}
		(1-\beta\epsilon)|X|^2_g\leq |X|^2_{(\Psi^{-1})^{*}g^{\Delta}}
		\leq (1+\beta\epsilon)|X|^2_g
	\end{equation}
	for $X\in T_pM$ for every $\lambda$ in $\Delta$ and every $p$ in
	$\Psi(\Delta)$.
\end{corollary}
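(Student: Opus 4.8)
The plan is to read off both inequalities directly from the bilinear estimate
$|(\Psi^{*}g-g^{\Delta})(v,w)|\le \beta\epsilon^2|v|_{g^{\Delta}}|w|_{g^{\Delta}}$
of Theorem \ref{thm:barycentric-thm2}, taking care that the resulting constant depends only on $n,\vartheta,C_0$ (i.e., comes from the first, not the $C_1$-dependent, estimate of that theorem). Throughout I will freely shrink the threshold $\alpha$ from Theorem \ref{thm:barycentric-thm2}; each step below imposes only a finite upper bound on $\epsilon$ depending on the fixed quantities $n,\vartheta,C_0,C_1$, so these shrinkings can be arranged simultaneously.

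First I would prove (\ref{eq:Psi-estimate}). Setting $v=w=X$ in the estimate of Theorem \ref{thm:barycentric-thm2} gives
\[ \left| \, |X|^2_{\Psi^{*}g}-|X|^2_{g^{\Delta}} \, \right| \le \beta\epsilon^2 |X|^2_{g^{\Delta}}, \]
hence $(1-\beta\epsilon^2)|X|^2_{g^{\Delta}}\le |X|^2_{\Psi^{*}g}\le (1+\beta\epsilon^2)|X|^2_{g^{\Delta}}$. Requiring $\epsilon<1$ so that $\epsilon^2\le\epsilon$ then yields (\ref{eq:Psi-estimate}) with the same $\beta=\beta(n,\vartheta,C_0)$.

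For (\ref{eq:Psi^-1-estimate}) the extra ingredient is that $\Psi:\Delta\to M$ is a diffeomorphism onto its image. Since we have assumed each simplex lies in a geodesic ball of radius less than the convexity radius, and such a ball is geodesically convex, Lemma \ref{thm:Psi-diffeo-closed-simplices} applies and $\Psi^{-1}:\Psi(\Delta)\to\Delta$ is a well-defined diffeomorphism. Given $p\in\Psi(\Delta)$ and $X\in T_pM$, set $\lambda=\Psi^{-1}(p)$ and $Y=(d\Psi^{-1})_pX\in T_{\lambda}\Delta$. By the definition of pullback, $|X|^2_{(\Psi^{-1})^{*}g^{\Delta}}=|Y|^2_{g^{\Delta}}$, while $X=(d\Psi)_{\lambda}Y$ gives $|X|^2_g=|(d\Psi)_{\lambda}Y|^2_g=|Y|^2_{\Psi^{*}g}$. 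Applying (\ref{eq:Psi-estimate}) to $Y$ therefore gives
\[ (1-\beta\epsilon)|X|^2_{(\Psi^{-1})^{*}g^{\Delta}} \le |X|^2_g \le (1+\beta\epsilon)|X|^2_{(\Psi^{-1})^{*}g^{\Delta}}. \]
Inverting these bounds and using $\tfrac{1}{1+t}\ge 1-t$ for $t\ge 0$ and $\tfrac{1}{1-t}\le 1+2t$ for $0\le t\le\tfrac12$ (valid once $\alpha$ is shrunk so that $\beta\epsilon\le\tfrac12$) yields $(1-\beta\epsilon)|X|^2_g\le |X|^2_{(\Psi^{-1})^{*}g^{\Delta}}\le (1+2\beta\epsilon)|X|^2_g$, which is (\ref{eq:Psi^-1-estimate}) after replacing $2\beta$ by $\beta$.

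The argument is essentially bookkeeping, so there is no serious obstacle; the only points that need attention are confirming that the constant's dependence is $\beta(n,\vartheta,C_0)$ (so that the two estimates in (\ref{eq:Psi-estimate}) and (\ref{eq:Psi^-1-estimate}) can share one $\beta$) and checking that the geodesic-ball hypothesis already in force supplies the convexity required by Lemma \ref{thm:Psi-diffeo-closed-simplices}. One could equivalently phrase the second part coordinate-free: $(\Psi^{-1})^{*}g^{\Delta}$ and $g$ on $\Psi(\Delta)$ are mutually $(1\pm\beta\epsilon)$-comparable as quadratic forms precisely because $\Psi^{*}g$ and $g^{\Delta}$ are on $\Delta$, which is the same computation without the explicit vector $Y$.
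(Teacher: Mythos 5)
Your proposal is correct and follows essentially the same route as the paper: the first estimate is obtained exactly as in the paper's proof (set $v=w=X$ in Theorem \ref{thm:barycentric-thm2} and use $\epsilon^2\le\epsilon$ for small $\epsilon$), and your treatment of \eqref{eq:Psi^-1-estimate} by pushing $X$ through $d\Psi^{-1}$ and inverting the bounds is precisely the ``similar'' argument the paper leaves implicit. The minor constant adjustment (absorbing the factor $2$ into $\beta$ and requiring $\beta\epsilon\le\tfrac12$) is harmless, since the corollary only asserts existence of some $\beta=\beta(n,\vartheta,C_0)$.
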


\begin{proof}
	Let $\lambda$ be a point in $\Delta$ and let $X\in
	T_{\lambda}\Delta$. Then by Theorem \ref{thm:barycentric-thm2},
	\[ \left| \left( \Psi^{*}g - g^{\Delta} \right)(X,X) \right| \leq
	\beta\epsilon^2 |X|^2_{g^{\Delta}}. \] We can rewrite this inequality as
	\begin{equation}
		\label{eq:bary-estimate-middle-step}
		(1-\beta\epsilon^2)|X|^2_{g^{\Delta}}
		\leq |X|^2_{\Psi^{*}g}
		\leq (1+\beta\epsilon^2)|X|^2_{g^{\Delta}}.
	\end{equation}
	which immediately gives us \eqref{eq:Psi-estimate} since $1-\beta\epsilon\leq
	1-\beta\epsilon^2$ and $1+\beta\epsilon^2\leq 1+\beta\epsilon$ for small
	$\epsilon$. The other inequality is similar.
%
%
%
%
%
\end{proof}

\subsection{Convergence of domain and range}
In this section we set the framework for considering convergence of the domain and range of the discrete conformal mappings. Recall that each Riemannian simplex has a corresponding Euclidean simplex determined by the same edge lengths, as described in Section \ref{sec:bary-riemm}.

\begin{definition} \label{def:exhaustion}
	Let $(M,g)$ be a complete Riemannian manifold, let $\Omega\subset M$ be an
	embedded submanifold, and let $\epsilon_n$ be a sequence of positive real numbers decreasing to zero. Let $\Omega_n\subset M$ be a
	sequence of compact submanifolds with geodesic triangulations, and let $\Omega_n^\Delta$ denote the corresponding piecewise flat manifold constructed by gluing Euclidean simplices determined by the same edge lengths. 
	
	Suppose the simplices in $\Omega_n$ are small enough such that there exist Riemannian barycentric maps  $\Psi_n: \Omega_n^\Delta \to \Omega_n$. Further, suppose for each compact $K\subset \Omega$ and each open $U \subset M$ containing $\Omega$: there exists $N=N(K,U)>0$ and $\vartheta = \vartheta(K)$ such that for every $n>N$,
	\begin{enumerate}
		\item $K\subset \Omega_n$,
		\item $\Omega_n\subset U$, and
		\item for each simplex in $\Omega_n$
		that has at least one vertex in $K$, the corresponding simplex in $\Omega_n^\Delta$ is
		$(\vartheta,\epsilon_n)$-full.
	\end{enumerate}
	In this case, we say that the sequence of triples $\{(\Omega_n,T_n,\epsilon_n)\}$ is a
	\emph{generalized triangulated exhaustion of $\Omega$}.
\end{definition}


\begin{remark}
	It
	would be slightly easier to simply take $\{\Omega_n\}$ to be an exhaustion of
	$\Omega$ by compact sets and in practice this is often exactly what we do.
	However, there are certain contexts where it makes sense to have more freedom in
	our sequence of compact subsets. For example, in \cite{glick_disc_unif}, compact
	manifolds are taken that are larger than the limit manifold. Our definition
	allows for this flexibility.
\end{remark}

For each $n$, let $\Omega^\Delta_n:= \Psi_n^{-1}(\Omega_n)$ denote the
piecewise flat manifold corresponding to $\Omega_n$. We will frequently drop the
$n$ from the subscript when no confusion arises from doing so.

\begin{definition}
	Let $\Omega$ and $\tilde{\Omega}$ have generalized triangulated exhaustions
	$\{(\Omega_n, T_n, \epsilon_n)\}$ and $\{(\tilde{\Omega}_n, T_n,\epsilon_n)\}$, where both have the same triangulation $T_n$, such that there exist discrete conformal
	maps $\{\phi_n:\Omega^\Delta_n\to \tilde{\Omega}^\Delta_n\}$. For each $n$
	define $\Phi_n:\Omega_n\to\tilde{\Omega}_n$ as the composition $\Phi_n:=
	\tilde{\Psi}_n\circ\phi_n\circ \Psi_n^{-1}$, where $\Psi_n,\tilde{\Psi}_n$ are
	the Riemannian barycentric maps on
	$\Omega^\Delta_n,\tilde{\Omega}^\Delta_n$ respectively. This map $\Phi_n$
	is called a \emph{barycentric discrete conformal map}. See Figure \ref{fig:Phi-n-first-time}.
\end{definition}


\begin{lemma}
	\label{thm:Phi-n-diffeo-closed-simplices}
	The barycentric discrete conformal map $\Phi_n$ is a diffeomorphism on each
	closed simplex $\sigma$.
\end{lemma}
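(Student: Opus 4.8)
The plan is to unwind the definition $\Phi_n = \tilde{\Psi}_n \circ \phi_n \circ \Psi_n^{-1}$ and check that, on a fixed closed simplex, each of the three factors is a diffeomorphism onto the corresponding closed simplex, so that their composition is as well.

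Fix a closed simplex $\sigma$ of the geodesic triangulation of $\Omega_n$; write $\sigma^\Delta \subset \Omega^\Delta_n$ for the corresponding Euclidean simplex, and $\tilde{\sigma}^\Delta \subset \tilde{\Omega}^\Delta_n$, $\tilde{\sigma} \subset \tilde{\Omega}_n$ for the simplices corresponding to $\sigma$ under the fixed combinatorial identification $T_n$. First I would treat the two barycentric factors. By the standing assumption that every simplex of every triangulation in the sequence lies in a small geodesic ball (which we may take to be convex), the map $\Psi_n$ restricted to $\sigma^\Delta$ is, after identifying $\sigma^\Delta$ with the standard simplex via its barycentric parameterization, the Riemannian barycentric map of Definition \ref{def:riem_barycentric} with image in a convex subset of $M$; so Lemma \ref{thm:Psi-diffeo-closed-simplices} applies and $\Psi_n|_{\sigma^\Delta} : \sigma^\Delta \to \sigma$ is a diffeomorphism, hence $\Psi_n^{-1}|_{\sigma} : \sigma \to \sigma^\Delta$ is a diffeomorphism. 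The identical argument applied to $\tilde{\Psi}_n$ shows $\tilde{\Psi}_n|_{\tilde{\sigma}^\Delta} : \tilde{\sigma}^\Delta \to \tilde{\sigma}$ is a diffeomorphism.

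Next I would treat the middle factor $\phi_n|_{\sigma^\Delta}$. By Definition \ref{def:discreteconfmap} this is the piecewise linear map sending the Euclidean simplex $\sigma^\Delta$ onto $\tilde{\sigma}^\Delta$ by matching corresponding vertices and extending affinely. Since $\Omega^\Delta_n$ and $\tilde{\Omega}^\Delta_n$ are piecewise flat manifolds (indeed $(\vartheta,\epsilon_n)$-full on simplices meeting a given compact set by Definition \ref{def:exhaustion}), both $\sigma^\Delta$ and $\tilde{\sigma}^\Delta$ are nondegenerate, so this affine map carries an affinely independent vertex set to an affinely independent vertex set and hence has invertible linear part; thus $\phi_n|_{\sigma^\Delta} : \sigma^\Delta \to \tilde{\sigma}^\Delta$ is a diffeomorphism, smooth up to and including $\partial \sigma^\Delta$ since affine maps are smooth everywhere. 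Composing, $\Phi_n|_{\sigma} = \tilde{\Psi}_n|_{\tilde{\sigma}^\Delta} \circ \phi_n|_{\sigma^\Delta} \circ \Psi_n^{-1}|_{\sigma}$ is a diffeomorphism of $\sigma$ onto $\tilde{\sigma}$.

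The only points needing genuine care are verifying the hypothesis of Lemma \ref{thm:Psi-diffeo-closed-simplices}—that the image of each closed simplex under $\Psi_n$, resp.\ $\tilde{\Psi}_n$, lies in a convex set—and the nondegeneracy of $\phi_n$ on each simplex; both are ensured by the conditions built into an admissible sequence (small simplices contained in geodesic balls, uniform fullness), and the smoothness across faces is precisely what Lemma \ref{thm:Psi-diffeo-closed-simplices} provides and what affine maps trivially satisfy, so beyond this bookkeeping there is no real obstacle.
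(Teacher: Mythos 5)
Your proof is correct and follows essentially the same route as the paper: decompose $\Phi_n = \tilde{\Psi}_n\circ\phi_n\circ\Psi_n^{-1}$, invoke Lemma \ref{thm:Psi-diffeo-closed-simplices} for the two barycentric factors, and observe that $\phi_n$ restricted to a closed simplex is an affine (hence smooth, including on the boundary) bijection between nondegenerate simplices. The extra bookkeeping you supply---nondegeneracy from fullness and the convexity hypothesis---is exactly what the paper leaves implicit, so there is nothing to add.
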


\begin{proof}
	By Lemma \ref{thm:Psi-diffeo-closed-simplices},
	both $\tilde{\Psi}_n$ and $\Psi_n^{-1}$ are diffeomorphisms on closed
	simplices, so all that remains is to note that $\phi_n$ is piecewise linear
	and hence clearly smooth even on the boundary of a simplex.
\end{proof}

\section{Convergence of discrete conformal factors}
\label{chap:eF}
In this section we will prove that discrete conformal factors converge to a function on the manifolds. 

\subsection{Local Discrete Conformal Rigidity Condition}
\label{sec:LDCR}

Let $\{T_n\}$ be a sequence of triangulations 
and $\{f_n\}$, $\{{\tilde{f}_n}\}$ sequences of discrete conformal factors for the the discrete conformal structure $\mathcal{C}$. We also need the following definitions about
combinatorial closed disks. In the following, we use $d_{\rm{comb}}$ to denote the combinatorial distance on the unweighted graph determined by the vertices and edges of the triangulation.

\begin{definition}
	A \emph{combinatorial closed disk} is an abstract simplicial complex that
	triangulates a topological closed disk. That is, a complex that is finite,
	connected, simply connected, and has nonempty boundary.
%
%

	A \emph{combinatorial closed disk of generation $m$} is a combinatorial
	closed disk $D_m$ with the property that there exists a vertex $v_0\in D_m$
	such that for every boundary vertex $w\in \partial D_m$, the combinatorial distance
	$d_{\rm{comb}}(v_0,w)=m$. The vertex $v_0$ is referred to as the
	\emph{center} of the combinatorial closed disk, or the disk can be said to be
	\emph{centered at} $v_0$.

When a subset of a triangulated manifold is (combinatorially) isomorphic
to a combinatorial closed disk, then we will call that subset a \emph{realized
	combinatorial closed disk}.
\end{definition}

We are now ready to define the LDCR condition, which is essential for showing convergence to conformal maps.

\begin{condition}[Local Discrete Conformal Rigidity]
	\label{def:LDCR}
	A sequence $\{(T_n,f_n,{\tilde{f}_n})\}_{n=1}^{\infty}$ is said to
	\emph{satisfy Local Discrete Conformal Rigidity (LDCR)} if there is a
	sequence of real numbers $\{s_m\}$ decreasing to zero such that for any vertex
	$v\in T_n$ that is the center of a realized combinatorial closed disk of
	generation $m$ there exists a number $\mathcal{N}_m\in\N$ such that if $n\geq \mathcal{N}_m$ and
	$w$ is a vertex adjacent to $v$ then
	\begin{equation}
		\label{eq:LDCR}
		\left| \frac{e^{f_n(v)}}{e^{f_n(w)}}\frac{e^{{\tilde{f}_n}(w)}}{e^{{\tilde{f}_n}(v)}} -1 \right| \leq s_m.
	\end{equation}
	Note that we can always take a subsequence such that we can replace $\mathcal{N}_m$ with
	$m$.

 Let  $M_{n}^\Delta= (M_n,T_n,f_n)$
and ${\tilde{M}_n}^\Delta= (\tilde{M}_n,T_n,{\tilde{f}_n})$ be piecewise flat manifolds with the
same underlying triangulation and discrete conformal structure, but possibly different
discrete conformal factors and hence different edge lengths. Then we say that
the sequence of pairs $\{(M_n^\Delta,{\tilde{M}^\Delta_n})\}$ \emph{satisfies LDCR} if
$\{(T_n,f_n,{\tilde{f}_n})\}$ satisfies $LDCR$.
\end{condition}

\begin{remark}
	It is important to note that satisfying LDCR gives the sequence $s_m$. It might be more clear to use the term \emph{$\{s_m\}$-LDCR} but we choose not to do so for brevity. Also note that the LDCR condition assumes a discrete conformal structure.
\end{remark}

We now prove an easy consequence of the LDCR condition. Define $H_n: V(T_n)\to\R$ by
$H_n(v)=e^{{\tilde{f}_n}(v)}/e^{f_n(v)}$. This function is a generalization of the
ratio of radii function in circle packing as discussed in \cite{rodin-sullivan,rodinschwarz}.

\begin{lemma}
	\label{thm:H(v)-close-to-H(w)}
	Suppose the sequences $M_n^\Delta=( M,
	T_n, f_n)$ and ${\tilde{M}_n^\Delta} = (\tilde{M}, T_n, \tilde{f}_n)$ are such that $\{(M_n^\Delta,{\tilde{M}_n^\Delta})\}$ satisfy LDCR. Suppose further that $v$ and $w$ are adjacent vertices and each is the
	center of a realized combinatorial closed disk of generation at least $m$,
	where $m$ is large enough so that $s_m\leq 1$. Then
	\begin{equation}
		\label{eq:H}
		(1-s_m)H_n(v)\leq H_n(w)\leq (1+s_m)H_n(v)
	\end{equation}
	and
	\begin{equation}
		\label{eq:H-squared}
		(1-3s_m)H_n^2(v)\leq H_n^2(w)\leq (1+3s_m)H_n^2(v).
	\end{equation}
\end{lemma}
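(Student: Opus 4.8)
The plan is to reduce both estimates to the defining inequality \eqref{eq:LDCR} of Condition \ref{def:LDCR} by a one-line algebraic identity, and then obtain the squared statement by squaring. First I would record the key observation: unwinding $H_n(v)=e^{\tilde{f}_n(v)}/e^{f_n(v)}$ and $H_n(w)=e^{\tilde{f}_n(w)}/e^{f_n(w)}$ gives
\[
\frac{H_n(w)}{H_n(v)} = \frac{e^{\tilde{f}_n(w)}}{e^{f_n(w)}}\cdot\frac{e^{f_n(v)}}{e^{\tilde{f}_n(v)}} = \frac{e^{f_n(v)}}{e^{f_n(w)}}\cdot\frac{e^{\tilde{f}_n(w)}}{e^{\tilde{f}_n(v)}},
\]
which is precisely the quantity inside the absolute value in \eqref{eq:LDCR}. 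So LDCR is exactly the assertion that $\bigl|H_n(w)/H_n(v)-1\bigr|\le s_m$ whenever $v$ is a center of a realized combinatorial closed disk of generation $m$, $w$ is adjacent to $v$, and $n$ is large enough (namely $n\ge\mathcal{N}_m$, or $n\ge m$ after passing to the subsequence mentioned in Condition \ref{def:LDCR}).

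Next I would dispose of the "generation at least $m$" hypothesis: since $v$ centers a realized combinatorial closed disk of generation $\ge m$, the subcomplex spanned by the vertices within combinatorial distance $m$ of $v$ exhibits $v$ as the center of a realized combinatorial closed disk of generation exactly $m$, so \eqref{eq:LDCR} applies with parameter $s_m$ (one can alternatively just invoke that $\{s_m\}$ is nonincreasing). Multiplying $\bigl|H_n(w)/H_n(v)-1\bigr|\le s_m$ through by $H_n(v)>0$ then yields \eqref{eq:H}. For \eqref{eq:H-squared} I would simply square \eqref{eq:H}: since $0\le s_m\le 1$ we have $(1+s_m)^2=1+2s_m+s_m^2\le 1+3s_m$ and $(1-s_m)^2=1-2s_m+s_m^2\ge 1-2s_m\ge 1-3s_m$, so
\[
(1-3s_m)H_n^2(v)\le (1-s_m)^2H_n^2(v)\le H_n^2(w)\le (1+s_m)^2H_n^2(v)\le (1+3s_m)H_n^2(v).
\]

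There is essentially no obstacle here — the lemma is an immediate consequence of the definitions. The only point deserving a sentence of justification is the reduction from "generation $\ge m$" to "generation $=m$" (equivalently, monotonicity of $s_m$), and the hypothesis $s_m\le 1$ is used exactly once, to absorb the $s_m^2$ term in the squared estimate. I note in passing that the assumption that $w$ is also a center of a generation-$\ge m$ disk is not actually needed for the stated conclusion (only that $w$ is adjacent to the center $v$), though it is harmless and matches the way the lemma is applied later.
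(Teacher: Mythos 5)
Your proof is correct and follows essentially the same route as the paper: rearrange the LDCR inequality (noting that the quantity in \eqref{eq:LDCR} is exactly $H_n(w)/H_n(v)$) to get \eqref{eq:H}, then square and use $s_m\leq 1$ to replace $(1\pm s_m)^2$ by $1\pm 3s_m$. Your added remarks (reducing generation $\geq m$ to generation $m$, and the observation that only $v$ needs to be a disk center) are accurate but do not change the argument.
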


\begin{proof}
	The first inequality is simply a rearrangement of \eqref{eq:LDCR}. For the
	second inequality, we square \eqref{eq:H} to get
	\[(1-s_m)^2H_n^2(v)\leq H_n^2(w)\leq (1+s_m)^2H_n^2(v) \] and then note that
	$1-3s_m\leq (1-s_m)^2\leq (1+s_m)^2\leq 1+3s_m$ for $s_m\leq 1$.
\end{proof}

\subsection{Combinatorial closed disks and $(\vartheta,\epsilon)$-full
	triangulations}
\label{sec:comb-closed-disks}

The next two lemmas allow us to relate combinatorial distances in realized
combinatorial closed disks to geometric distances, at least for triangulations
which are $(\vartheta,\epsilon)$-full and piecewise flat. Recall Definition \ref{def:fullness}.


\begin{lemma}
	\label{thm:fullness-min-distance}
	Let $\Delta$ be a $(\vartheta,\epsilon)$-full Euclidean simplex of dimension
	$k$. Label the vertices of $\Delta$ as $\{v_0,v_1,\dots, v_k\}$ and label the
	$(k-1)$-dimensional face opposite vertex $v_i$ by $F_{k-1}(i)$. Let $h_k$ be
	shortest distance between a vertex and the opposite face. That is,
	\[h_k = \min_i d(v_i, F_{k-1}(i)). \] Then $h_k$ satisfies
	\begin{equation}
		\label{eq:fullness-min-height}
		\vartheta \epsilon\leq h_k\leq \epsilon.
	\end{equation}
\end{lemma}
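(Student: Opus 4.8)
The plan is to bound $h_k$ above and below using the volume of the simplex, since $(\vartheta,\epsilon)$-fullness is a statement about $k!\Vol(\Delta)$, and heights are exactly what relate volume in dimension $k$ to volume in dimension $k-1$. The key identity is the elementary ``base times height'' formula for simplices:
\begin{equation}
	\label{eq:vol-recursion}
	\Vol(\Delta) = \frac{1}{k}\, d(v_i, F_{k-1}(i))\cdot \Vol\bigl(F_{k-1}(i)\bigr)
\end{equation}
for each $i$, where $F_{k-1}(i)$ is regarded as a $(k-1)$-dimensional Euclidean simplex in its affine span. This holds because $d(v_i,F_{k-1}(i))$ is precisely the distance from $v_i$ to the affine hyperplane spanned by the opposite face.

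First I would prove the upper bound $h_k\le\epsilon$. Pick the index $i$ that realizes the minimum $h_k = d(v_i,F_{k-1}(i))$. The foot of the perpendicular from $v_i$ to the affine span of $F_{k-1}(i)$ is some point $q$; then $h_k = d(v_i,q) \le d(v_i, v_j)$ for any vertex $v_j$ of $F_{k-1}(i)$ (the perpendicular distance to a hyperplane is at most the distance to any point of it), and $d(v_i,v_j)$ is an edge length, hence $\le\epsilon$ by fullness. This gives $h_k\le\epsilon$. (In fact it suffices that $h_k$ is bounded by the length of any edge incident to $v_i$, all of which are $\le\epsilon$.)

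Next the lower bound $\vartheta\epsilon \le h_k$. Using \eqref{eq:vol-recursion} with the minimizing index $i$, we get $h_k = k\Vol(\Delta)/\Vol(F_{k-1}(i))$. From fullness, $k!\Vol(\Delta)\ge\vartheta\epsilon^k$, so $\Vol(\Delta)\ge \vartheta\epsilon^k/k!$. For the denominator, I need an \emph{upper} bound on $\Vol(F_{k-1}(i))$: since $F_{k-1}(i)$ is a Euclidean $(k-1)$-simplex all of whose edges have length $\le\epsilon$, its volume is at most that of a regular $(k-1)$-simplex of edge $\epsilon$, and in any case certainly $(k-1)!\Vol(F_{k-1}(i))\le\epsilon^{k-1}$ — this is the standard bound that a $(k-1)$-simplex with edges $\le\epsilon$ fits inside (a translate of) $\epsilon D$ up to the parallelepiped estimate, giving $\Vol(F_{k-1}(i))\le\epsilon^{k-1}/(k-1)!$. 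Combining,
\begin{equation*}
	h_k = \frac{k\,\Vol(\Delta)}{\Vol(F_{k-1}(i))} \ge \frac{k\cdot \vartheta\epsilon^k/k!}{\epsilon^{k-1}/(k-1)!} = \vartheta\epsilon,
\end{equation*}
which is the desired lower bound.

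The main obstacle, such as it is, is getting the clean upper bound $(k-1)!\Vol(F_{k-1}(i))\le\epsilon^{k-1}$ on the face volume with the correct constant; one must be careful that ``all edges $\le\epsilon$'' really does cap the $(k-1)$-volume by $\epsilon^{k-1}/(k-1)!$. This follows from writing $\Vol(F_{k-1}(i)) = \frac{1}{(k-1)!}|\det A|$ where $A$ has columns $v_j - v_{j_0}$ (each of norm $\le\epsilon$), and applying Hadamard's inequality $|\det A|\le\prod\|v_j-v_{j_0}\|\le\epsilon^{k-1}$. Everything else is the base-height recursion and the definition of fullness, so once this face-volume bound is in hand the two inequalities in \eqref{eq:fullness-min-height} drop out immediately.
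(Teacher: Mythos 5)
Your proof is correct and follows essentially the same route as the paper: both bound $h_k$ above by an edge length and below by combining the base-times-height formula with the fullness lower bound on $k!\Vol(\Delta)$ and the face-volume estimate $(k-1)!\Vol(F_{k-1}(i))\leq \epsilon^{k-1}$. The only difference is in how that face estimate is obtained — you use Hadamard's inequality (best stated in Gram-matrix form, since the face need not be full-dimensional in the ambient space), while the paper iterates the base-times-height decomposition down to a single edge, using that each intermediate height is bounded by an edge length $\leq\epsilon$.
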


\begin{proof}
	The longest edge in $\Delta$ being no larger than $\epsilon$ implies
	immediately that the height $h_k\leq \epsilon$.
	
	For the other inequality, let $|F_{k-1}|$ denote the volume of the
	$(k-1)$-dimensional face whose distance from its opposite vertex is the
	smallest. Then the volume of the $k$-simplex $\Delta$ can be written as \[
	\Vol(\Delta) = \frac{1}{k} h_k |F_{k-1}|.\]
	Since $F_{k-1}$ is a $(k-1)$-simplex in its own right, we can write its volume
	as \[ |F_{k-1}| = \frac{1}{k-1} h_{k-1}|F_{k-2}|\] and hence
	\[ \Vol(\Delta) = \frac{1}{k(k-1)} h_kh_{k-1}|F_{k-2}|.\]
	
	Proceeding in this manner, we find that
	\[ \Vol(\Delta) = \frac{1}{k!} h_kh_{k-1}\dots h_2|F_1|, \] where
	$|F_1|$ is the length of an edge.
	
	Since $\Delta$ was assumed $(\vartheta,\epsilon)$-full, $|F_1|\leq
	\epsilon$. Further, since the heights $h_i$ are clearly no larger than the
	length of the longest side in their subsimplex, they too are such that
	$h_i\leq \epsilon$.
	
	Since $\Delta$ was assumed $(\vartheta,\epsilon)$-full, $k! \Vol(\Delta)
	\geq \vartheta\epsilon^k$, so \[ k!\Vol(\Delta) = \left(\prod_{i=2}^kh_i\right)
	|F_1| \geq \vartheta\epsilon^k,\] or, solving for $h_k$ and using that
	$h_i\leq \epsilon$ and $|F_1|\leq \epsilon$:
	\[ h_k\geq \frac{\vartheta\epsilon^k}{|F_1|\prod_{i=2}^{k-1} h_i} \geq
	\frac{\vartheta\epsilon^k}{\epsilon^{k-1}} = \vartheta\epsilon.\]
\end{proof}

\begin{lemma}
	\label{thm:geom-diam-D}
	Let $D_m$ be a realized combinatorial closed disk of generation $m$ centered
	at the vertex $v$ such that every simplex in $D_m$ is Euclidean and
	$(\vartheta,\epsilon)$-full. Then the distance
	$\delta=d_{g^{\Delta}}(v,\partial D_m)$ from $v$ to the boundary of $D_m$
	satisfies
	\begin{equation}
		\label{eq:geom-diam-D}
		\vartheta m\epsilon\leq\delta\leq m\epsilon.
	\end{equation}
\end{lemma}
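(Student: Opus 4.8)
The plan is to bound $\delta = d_{g^\Delta}(v, \partial D_m)$ from above and below separately, using the relationship between combinatorial distance and geometric distance that the $(\vartheta,\epsilon)$-fullness hypothesis provides.

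For the upper bound, I would pick a boundary vertex $w \in \partial D_m$ with $d_{\rm comb}(v,w) = m$ and a combinatorial geodesic $v = u_0, u_1, \dots, u_m = w$ along edges of the triangulation. Since each consecutive pair $u_i, u_{i+1}$ is joined by an edge, and every simplex is $(\vartheta,\epsilon)$-full (so every edge has length at most $\epsilon$), the path through these vertices has $g^\Delta$-length at most $m\epsilon$. Hence $\delta \le d_{g^\Delta}(v,w) \le m\epsilon$, which is the right-hand inequality.

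For the lower bound, the key is Lemma \ref{thm:fullness-min-distance}: in any $(\vartheta,\epsilon)$-full simplex, the distance from a vertex to the opposite face is at least $\vartheta\epsilon$. I would argue that any path from $v$ to $\partial D_m$ must cross $m$ ``concentric shells'' of simplices. More precisely, define the combinatorial ball $B_j = \{x : d_{\rm comb}(v,x) \le j\}$; then $v \in B_0$, and $\partial D_m$ lies outside (the interior of) $B_{m-1}$. A rectifiable path from $v$ to $\partial D_m$ must pass through each of the ``annuli'' $B_j \setminus B_{j-1}$ for $j = 1, \dots, m$, and to cross from combinatorial-distance level $j-1$ to level $j$, inside any single simplex it traverses, it must move from a vertex (or face spanned by level-$(j-1)$ vertices) across to the opposite part of that simplex; each such crossing contributes at least the height $h_k \ge \vartheta\epsilon$ of that simplex. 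Summing over $m$ crossings gives a path length of at least $\vartheta m \epsilon$, so $\delta \ge \vartheta m\epsilon$. Taking the infimum over all paths gives the left-hand inequality.

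The main obstacle is making the ``must cross $m$ shells'' argument rigorous: one needs to verify that the sublevel sets of combinatorial distance genuinely separate $v$ from the boundary as nested subcomplexes, and that within each simplex the geometric distance a path must travel to increase the combinatorial level is bounded below by the simplex height from Lemma \ref{thm:fullness-min-distance} — care is needed because a path could wander tangentially along a face at a fixed combinatorial level before crossing, but such wandering only adds length. A clean way to handle this is to consider the function $u = d_{\rm comb}(v, \cdot)$ extended affinely (barycentrically) over each simplex to a piecewise-linear function on $|D_m|$; one shows $\|\nabla u\|_{g^\Delta} \le 1/(\vartheta\epsilon)$ on each simplex (this is essentially the reciprocal-height estimate from Lemma \ref{thm:fullness-min-distance}, since the gradient of a barycentric-type coordinate is controlled by the inverse of the corresponding height), and then $|u(\partial D_m) - u(v)| = m \le \tfrac{1}{\vartheta\epsilon}\, d_{g^\Delta}(v,\partial D_m)$, which rearranges to the desired bound. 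I expect the author's proof to follow one of these two routes.
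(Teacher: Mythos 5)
Your upper bound is exactly the paper's. For the lower bound the paper takes a different route: it inducts on the generation, considering the subdisk $D_{m-1}\subset D_m$ with the same center and the shortest geodesic $\gamma$ joining $\partial D_{m-1}$ to $\partial D_m$; either $\gamma$ passes through a vertex, or it meets both boundary polygons orthogonally and can be translated parallel to itself until an endpoint is a vertex, so in either case $\ell(\gamma)$ is at least a vertex-to-opposite-face distance, hence $\ell(\gamma)\geq\vartheta\epsilon$ by Lemma \ref{thm:fullness-min-distance}, and $\delta_m\geq\delta_{m-1}+\vartheta\epsilon$. Your piecewise-linear interpolation of $u=d_{\mathrm{comb}}(v,\cdot)$ is a genuinely different argument that replaces the induction and the (somewhat informal) translation step by a single Lipschitz estimate, using that $u(v)=0$ and $u\equiv m$ on $\partial D_m$ (boundary vertices are at combinatorial distance $m$ and boundary edges join boundary vertices), applied along a minimizing geodesic split into its subsegments in closed triangles, as in the proof of Proposition \ref{thm:eF-equicontinuous}. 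One detail needs tightening to recover the stated constant: if you write $u=\sum_i u_i\lambda^i$ on a triangle and estimate termwise using $\lvert\nabla\lambda^i\rvert=1/h_i$ (with $h_i$ the distance from vertex $i$ to the opposite edge), you only get $\lvert\nabla u\rvert\leq 2/(\vartheta\epsilon)$, i.e.\ $\delta\geq\tfrac{1}{2}\vartheta\epsilon m$. The fix is to note that the three vertex values on any triangle of $D_m$ are pairwise adjacent, so they take at most two consecutive integers $j,j+1$; hence on that triangle $u=j+\sum_{i\in S}\lambda^i$, where $S$ is the set of vertices with value $j+1$, and the gradient of $\sum_{i\in S}\lambda^i$ has norm exactly the reciprocal of the distance from the face spanned by $S$ to the complementary face, which for a triangle is an altitude and is at least $\vartheta\epsilon$ by Lemma \ref{thm:fullness-min-distance}. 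With that observation your argument gives precisely \eqref{eq:geom-diam-D}; both proofs rest on the same key input (Lemma \ref{thm:fullness-min-distance}), yours trading the paper's induction and geodesic-translation geometry for a PL gradient bound.
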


\begin{proof}
	Firstly, since every edge length in $D_m$ is no greater than $\epsilon$,
	clearly $\delta$ is no larger than $\epsilon$ multiplied by the generation.
	That is, $\delta\leq m\epsilon$.
	
	We prove the left hand side of the inequality using induction.
	
	When $m=1$, $D_1$ consists of a central vertex $v$ and a collection of
	simplices each sharing the vertex $v$. In this case, the distance from $v$ to
	$\partial D_1$ is the length of a straight line connecting the vertex $v$ to
	its opposing face in one of the simplices and hence by Lemma
	\ref{thm:fullness-min-distance}, $\delta\geq \vartheta \epsilon =
	\vartheta\epsilon m$ since $m=1$.
	
	Next assume that for every disk with $m-1$ generations the distance
	$\delta_{m-1}\geq \vartheta\epsilon (m-1)$. Take $D_m$ to be a disk with $m$
	generations and let $D_{m-1}\subset D_m$ be the subcomplex of $D_m$ with the
	same center but with $m-1$ generations.
	
	Let $\gamma$ be the shortest geodesic connecting $\partial D_{m-1}$ and
	$\partial D_m$. Then there are two possibilities: either $\gamma$ passes
	through a vertex of $D_m$ or it does not. If it does, then $\gamma$ is at
	least as long as a straight line connecting a vertex of a simplex to its
	opposite face, so the length $\ell(\gamma)\geq \vartheta \epsilon$ by Lemma
	\ref{thm:fullness-min-distance}.
	
	If $\gamma$ does not pass through any vertices, then since it is the shortest
	path between $\partial D_{m-1}$ and $\partial D_m$, it must meet both
	$\partial D_{m-1}$ and $\partial D_m$ at right angles and hence $\partial
	D_{m-1}$ and $\partial D_m$ are parallel in some neighborhood around the
	endpoints of $\gamma$, and this neighborhood is determined by vertices in
	$\partial D_{m-1}$ and $\partial D_m$. Hence we can translate $\gamma$ until
	one of its endpoints is a vertex of either $\partial D_{m-1}$ or $\partial
	D_m$ and this translation will have the same length as the original. And so
	again, $\gamma$ is at least as long as a straight line connecting a vertex to
	the opposing face in a $(\vartheta,\epsilon)$-full simplex and hence here too
	$\ell(\gamma)\geq \vartheta\epsilon$.
	
	\begin{figure}[h]
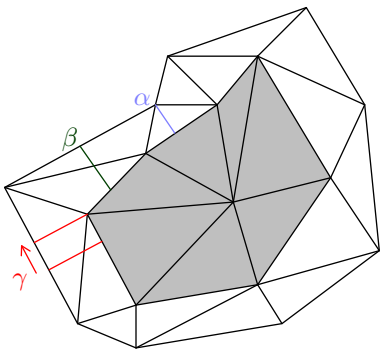

		\centering
		\begin{asy}
		size(5cm);
		defaultpen(fontsize(10pt));
		import graph;
		
		//D_(m-1)
		pair[] innerVerts =
		{(0,0),(2,0.5),(1/2,3),(-1/3,2),(-1.8,1),(-3,-.25),(-2,-2.12),(1/2,-1.7)};
		
		path m;
		for(int i=1; i<innerVerts.length; ++i) {
		m=m--innerVerts[i];
		}
		m=m--cycle;
		filldraw(m, mediumgrey);
		
		for(int i=1; i < innerVerts.length; ++i) {
		draw(innerVerts[0] -- innerVerts[i]);
		}
		
		//D_m
		pair[] outerVerts = {(-1.35,3),(-1.6,2),(-4.7,.3),(-3.2,-2.5),(-2,-3),(1,-2.5),(3,-1),(2.7,2),(1.5,4)};
		
		path M = outerVerts[0];
		for(int i=0; i<outerVerts.length; ++i) {
		M=M--outerVerts[i];
		}
		M = M -- cycle;
		draw(M);
		
		//connecting spokes
		for(int i=0; i<4; ++i) {
		draw(outerVerts[i] -- innerVerts[2+i]);
		draw(outerVerts[i] -- innerVerts[3+i]);
		}
		
		draw(outerVerts[4] -- innerVerts[6]);
		draw(outerVerts[4] -- innerVerts[7]);
		draw(outerVerts[5] -- innerVerts[7]);
		draw(outerVerts[6] -- innerVerts[7]);
		draw(outerVerts[6] -- innerVerts[1]);
		draw(outerVerts[7] -- innerVerts[1]);
		draw(outerVerts[7] -- innerVerts[2]);
		draw(outerVerts[8] -- innerVerts[2]);
		
		//now for the geodesics
		
		//first geodesic, from outerVerts[1] to line connecting innerVerts[3] and innerVerts[4]:
		
		real slope = (innerVerts[3].x-innerVerts[4].x)/(innerVerts[4].y-innerVerts[3].y);
		
		real Y(real X) {
		return outerVerts[1].y+(X-outerVerts[1].x)*slope;
		}
		path geo1 = graph(Y, outerVerts[1].x, -1.203);
		
		draw(geo1, lightblue);
		label('$\\alpha$', geo1, 2.5*NW, lightblue);
		
		//second geodesic, between angled lines:
		
		// the points
		// along outerVerts[1] -- outerVerts[2]
		real tp = .5;
		pair p = (outerVerts[1].x*(1-tp)+outerVerts[2].x*tp, outerVerts[1].y*(1-tp)+ outerVerts[2].y*tp);
		
		// along innerVerts[4] -- innerVerts[5]
		real tq = .6;
		pair q = (innerVerts[4].x*(1-tq)+innerVerts[5].x*tq, innerVerts[4].y*(1-tq)+ innerVerts[5].y*tq);
		
		// drawing the path
		draw(p -- q, darkgreen);
		label('$\\beta$', p -- q, 3*NW, darkgreen);
		
		//geodesics along the parallel:
		// inner path: innerVerts[5] -- innerVerts[6];
		// outer path: outerVerts[1] -- outerVerts[2];
		
		//arbitrary outer point
		real ta = .3;
		pair a = (innerVerts[5].x*(1-ta)+innerVerts[6].x*ta, innerVerts[5].y*(1-ta)+ innerVerts[6].y*ta);
		
		real slope = (innerVerts[5].x-innerVerts[6].x)/(innerVerts[6].y-innerVerts[5].y);
		
		real Y(real X) {
		return a.y+(X-a.x)*slope;
		}
		path geo3 = graph(Y, a.x, -3.787);
		draw(geo3, red);
		
		//geodesic from the vertex along the parallel
		real F(real X) {
		return innerVerts[5].y+(X-innerVerts[5].x)*slope;
		}
		path geo4 = graph(F, innerVerts[5].x, -4.09);
		draw(geo4, red);
		
		//sliding arrow
		real arrowSlope = -1/slope;
		
		path arrowPath = (1, arrowSlope) -- (0,0);
		
		arrowPath = scale(.3)*arrowPath; arrowPath = shift(-4.35,-.9)*arrowPath;
		
		draw(arrowPath, red, arrow=ArcArrow(SimpleHead));
		label('$\\gamma$', arrowPath, SW+S, red);
		
		\end{asy}
		\caption{Geodesics connecting $\partial D_m$ and $\partial D_{m-1}$.}
		\label{fig:geom-diam-D}
	\end{figure}
	
	Since the induction hypothesis was that $D_{m-1}$ is such that the distance
	$\delta_{m-1}\geq \vartheta \epsilon (m-1)$, we have that $\delta_m$ satisfies
	\[ \delta_m\geq \delta_{m-1}+\ell(\gamma) \geq \vartheta\epsilon (m-1)
	+\vartheta\epsilon = \vartheta\epsilon m.\]
\end{proof}

We use the next few lemmas to estimate the number of generations $m$ of a closed
combinatorial disk contained in a geodesic ball of radius $R$.

\begin{lemma}
	\label{thm:ball-radius-R}
	Let $(M,g)$ be a Riemannian manifold and let $K\subset M$ be compact. Then
	there exists $R>0$, depending only on $K$, such that for each point $p\in K$
	there is a geodesic ball of radius $R$ about $p$.
\end{lemma}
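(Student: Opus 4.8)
The plan is to reduce the statement to the positivity together with (semi)continuity of a standard geometric quantity on $K$, namely the injectivity radius (equivalently, one could use the convexity radius already referenced in Proposition~\ref{thm:bary-prop13}).

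\emph{Step 1: Semicontinuity and positivity of the injectivity radius.} I would recall the standard fact that for any Riemannian manifold the injectivity radius function $\mathrm{inj}\colon M\to(0,\infty]$ is lower semicontinuous and strictly positive at every point, and that since $(M,g)$ is complete it is in fact continuous. This is a classical result from Riemannian geometry and is implicit in the treatment of the convexity radius used above.

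\emph{Step 2: Minimize over the compact set.} Since $K$ is compact and $\mathrm{inj}$ is lower semicontinuous, the restriction $\mathrm{inj}|_K$ attains its infimum; call it $\iota_0\in(0,\infty]$. This infimum is strictly positive because $\mathrm{inj}(p)>0$ for every $p\in K$. I then set $R:=\tfrac12\min\{1,\iota_0\}>0$, which depends only on $K$ (and on the fixed data $(M,g)$), and is in particular finite.

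\emph{Step 3: Conclude.} For every $p\in K$ we have $R<\mathrm{inj}(p)$, so $\exp_p$ restricts to a diffeomorphism from the Euclidean ball of radius $R$ in $T_pM$ onto the metric ball $B(p,R)\subseteq M$; that is, $B(p,R)$ is a geodesic ball of radius $R$ about $p$, as desired.

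There is no genuine obstacle here; the only point requiring care is the choice of the geometric radius function and its regularity. Lower semicontinuity of $\mathrm{inj}$ suffices for the minimization in Step~2 and holds on any Riemannian manifold, while completeness (available to us) merely upgrades it to continuity. If one prefers to stay closer to the language used elsewhere in the paper, the convexity radius may be substituted throughout: it is likewise strictly positive and lower semicontinuous, attains a positive minimum on $K$, and any metric ball of radius below it is a (convex) geodesic ball.
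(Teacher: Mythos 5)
Your proof is correct, but it takes a different route from the paper. The paper's argument is an elementary covering argument: around each $x\in K$ it picks $r_x>0$ with $B_{2r_x}(x)$ a convex geodesic ball, extracts a finite subcover $\{B_{r_i}(x_i)\}$ of $K$, sets $R:=\min_i r_i$, and then uses the triangle inequality to show $B_R(p)\subset B_{2r_i}(x_i)$ for any $p\in K$, so each $B_R(p)$ is geodesic because it sits inside a convex geodesic ball. You instead invoke the classical regularity of the injectivity (or convexity) radius function --- positivity and lower semicontinuity, hence attainment of a positive minimum $\iota_0$ on the compact set $K$ --- and take $R=\tfrac12\min\{1,\iota_0\}$. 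Both arguments are valid and yield the same statement; yours is shorter but outsources the work to the semicontinuity theorem, while the paper's is self-contained (it only needs pointwise existence of convex geodesic balls) and in fact delivers a little more, namely that each $B_R(p)$ lies inside a \emph{convex} geodesic ball, which is in the same spirit as the convexity-radius hypothesis of Proposition~\ref{thm:bary-prop13}. One small point to keep in mind: identifying $\exp_p$ of the Euclidean $R$-ball with the metric ball $B(p,R)$ uses completeness (Hopf--Rinow); this is harmless here since the ambient surfaces in the paper are assumed complete, and your explicit remark that lower semicontinuity alone suffices for the minimization is accurate.
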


\begin{proof}
	For each point $x\in K$, there is a number $r_x\in \R_{>0}$ such that the ball
	$B_{2r_x}(x)$ of radius $2r_x$ about $x$ is a convex geodesic ball. Cover $K$ by
	balls of radius $r_x$ about each $x$. Since $K$ is compact there is a finite
	subcover $\{B_{r_i}(x_i)\}_{i=1}^N$, where $r_i:= r_{x_i}$. Let $R:=\min_ir_i$
	be the minimum over $i$ of the radii $r_i$.
	
	Now take an arbitrary point $p\in K$. By assumption there is some $i$ such
	that $p\in B_{r_i}(x_i)$. We will show that the ball $B_R(p)$ is contained in
	the convex geodesic ball $B_{2r_i}(x_i)$ and hence is itself geodesic. Let $y\in
	B_R(p)$. Then
	\begin{align*}
		d_g(y,x_i)
		& \leq d_g(y,p)+d_g(p,x_i) \\
		& < R + r_i \\
		& \leq 2r_i,
	\end{align*}
	where the last step follows since we have defined $R$ as the minimum of the
	$r_i$'s. So $d_g(y,x_i)<2r_i$ and hence $y\in B_{2r_i}(x_i)$ for every $y\in
	B_R(p)$.
	
	Thus $B_R(p)\subset B_{2r_i}(x_i)$ for some $i$ for every $p$ and hence there
	is a radius $R>0$ such that for each point $p\in K$ the ball of radius $R$
	about $p$ is a geodesic ball.
\end{proof}

\begin{lemma}
	\label{thm:R-m-epsilon}
	Let $(M,g)$ be a Riemannian manifold and let $K\subset M$ be a compact subset.
	Assume that $M$ admits a triangulation $T$ with geodesic edges such that the
	corresponding piecewise flat manifold $M^{\Delta}$ is
	$(\vartheta,\epsilon)$-full, where $\epsilon \in (0, 1/(3\beta)]$ is small
	enough so that the Riemannian barycentric map $\Psi:M^{\Delta}\to M$ is
	well-defined. (The constant $\beta=\beta(n,\vartheta,C_0)$ is given in Theorem
	\ref{thm:barycentric-thm2}.)
	
	Choose a vertex $v$ such that $v$ lies within the compact set $K$ and let
	$R=R(K)$ be chosen as in Lemma \ref{thm:ball-radius-R}. Let $D_m(v)$ be a
	realized closed combinatorial disk such that
	\begin{enumerate}
		\item $D_m(v) \subset B_R(v)$, and
		\item for any $q>m$, $D_q(v)$ has nonempty intersection with $M\setminus
		B_R(v)$.
	\end{enumerate}
	Then the following inequality holds:
	\[ m \geq \frac{R}{2\epsilon}. \]
\end{lemma}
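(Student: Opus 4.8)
The plan is to use hypothesis~(2) alone. Applying it with $q=m+1$, the realized disk $D_{m+1}(v)$ meets $M\setminus B_R(v)$; I would first upgrade this to the statement that some \emph{vertex} of $D_{m+1}(v)$ lies outside $B_R(v)$, and then bound the distance from $v$ to that vertex by walking along the triangulation.

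To produce the far vertex, note that $B_R(v)$ is geodesically convex: by the construction in Lemma~\ref{thm:ball-radius-R} it sits inside a convex geodesic ball, on which $d_g(v,\cdot)$ is a convex function, so its sublevel sets are convex. Some Karcher simplex $\tau$ of $D_{m+1}(v)$ is not contained in $B_R(v)$. Since $\epsilon$ is small, the vertices of $\tau$ lie in a ball well within the convexity radius, and $\tau$ is precisely the set of Riemannian centers of mass of its vertices over all barycentric weights (Definition~\ref{def:riem_barycentric}). I would then invoke the standard fact that the minimizer of $a\mapsto\sum_i\lambda^i d_g^2(a,p_i)$ cannot be decreased by projecting onto a geodesically convex set containing all the $p_i$: hence if every vertex of $\tau$ lay in $B_R(v)$, so would all of $\tau$. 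As $\tau\not\subset B_R(v)$, some vertex $w$ of $\tau$ satisfies $d_g(v,w)\ge R$.

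Now $w$ is a vertex of $D_{m+1}(v)$, so $d_{\mathrm{comb}}(v,w)\le m+1$ and there is an edge path $v=u_0,u_1,\dots,u_k=w$ with $k\le m+1$. Each $u_{j-1}u_j$ is a geodesic edge of $T$ whose length is the corresponding edge length of $M^\Delta$, and this is $\le\epsilon$ because $M^\Delta$ is $(\vartheta,\epsilon)$-full; thus $d_g(u_{j-1},u_j)\le\epsilon$ and the triangle inequality gives $d_g(v,w)\le(m+1)\epsilon$. Combining with the previous paragraph, $R\le(m+1)\epsilon$. Finally, a combinatorial closed disk has generation $m\ge1$, so $m+1\le 2m$ and therefore $R\le 2m\epsilon$, i.e.\ $m\ge R/(2\epsilon)$.

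The one delicate point is the upgrade to a vertex: a priori $D_{m+1}(v)$ could protrude from $B_R(v)$ only through the interior of a Karcher simplex, and an interior point is only guaranteed to lie within roughly $(m+1)\epsilon+O(\epsilon)$ of $v$, which suffices only once $m$ is bounded away from the smallest values. Convexity of $B_R(v)$ together with the description of the Karcher simplex as a family of centers of mass is exactly what removes this issue, and the constant $2$ in the conclusion is then present precisely to absorb the off-by-one between generation $m$ and generation $m+1$. Hypothesis~(1) plays no role in the inequality itself; it merely pins down which generation $m$ is under discussion.
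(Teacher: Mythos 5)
Your route is genuinely different from the paper's: the paper takes the offending point $p\in D_{m+1}(v)\setminus B_R(v)$, passes to $a=\Psi^{-1}(p)$ in $M^\Delta$, bounds $d_{g^\Delta}(v,a)\le (m+1)\epsilon$ via Lemma \ref{thm:geom-diam-D}, and converts back to $d_g$ at the cost of the factor $(1+\beta\epsilon)$ from Proposition \ref{thm:integral-estimate} and Corollary \ref{thm:bary-estimates}; the hypothesis $\epsilon\le 1/(3\beta)$ is consumed precisely to absorb that factor into the constant $2$. You instead try to avoid the flat-metric comparison altogether by producing a \emph{vertex} outside $B_R(v)$ and walking to it along at most $m+1$ geodesic edges of length $\le\epsilon$, which, if it worked, would be cleaner (no barycentric estimate, valid for all $m\ge 1$). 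However, the step that produces the vertex has a gap.

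The gap is the convexity claim for $B_R(v)$. Lemma \ref{thm:ball-radius-R} only guarantees that $B_R(v)$ is contained in \emph{some} convex geodesic ball $B_{2r_i}(x_i)$; it does not make $B_R(v)$ itself geodesically convex, and your justification --- that $d_g(v,\cdot)$ is convex on $B_{2r_i}(x_i)$ because that ball is convex --- is false in general: convexity of the distance function from $v$ requires the region to be small relative to curvature (essentially within the convexity radius at $v$), and nothing in the construction of $r_x$ bounds $2r_i$ or $R$ by such a threshold (on the round sphere, a convex ball of radius near $\pi/2$ with $v$ off-center already defeats the claim). Your second ingredient --- that a Karcher simplex whose vertices lie in a closed convex set stays in that set --- is a correct fact of Karcher, but it is only available once the convexity of $\bar B_R(v)$ (on a slightly larger region containing the simplex) has been secured, so it inherits the same problem. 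The gap is repairable: either strengthen Lemma \ref{thm:ball-radius-R} so that $r_x$ is chosen below a fixed fraction of the convexity radius, making every $\bar B_R(p)$, $p\in K$, strongly convex with $d_g(p,\cdot)$ convex there; or give up exact containment and use only that the Karcher simplex lies within $O(\epsilon)$ of its vertex set, in which case you get $R\le (m+1)\epsilon+O(\epsilon)$ and must absorb the error for $m\ge 2$, which is essentially the bookkeeping the paper does. A smaller caveat: the claim that every vertex of $D_{m+1}(v)$ has $d_{\mathrm{comb}}(v,\cdot)\le m+1$ is not literally forced by Definition of a generation-$(m+1)$ disk (only boundary vertices are pinned at distance $m+1$), though the paper's own proof leans on the same reading, so this is a shared, minor looseness rather than a defect specific to your argument.
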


\begin{proof}
	Since we are assuming that $D_m(v)$ is such that no larger realized
	combinatorial closed disk $D_q(v)$ is completely contained in $B_R(v)$, we
	have in particular that $D_{m+1}(v)$ has nonempty intersection with the
	complement of $B_R(v)$. Take a point $p$ in $D_{m+1}(v)$ such that $p$ is not
	in $B_R(v)$ and let $a\in M^{\Delta}$ be the image of $p$ under $\Psi^{-1}$.
	That is, $a:= \Psi^{-1}(p)$.
	
	Since $\Psi$ and $\Psi^{-1}$ are continuous everywhere and diffeomorphisms
	when restricted to (closed) simplices by Lemma
	\ref{thm:Psi-diffeo-closed-simplices}, Proposition \ref{thm:integral-estimate}
	in the Appendix tells us that \[ d_g(v,p) \leq
	\sqrt{1+\beta\epsilon}\,d_{g^{\Delta}}(v,a) \leq
	(1+\beta\epsilon)d_{g^{\Delta}}(v,a). \] Since $p$ is not in $B_R(v)$, the
	distance $d_g(v,p)$ must be no less than $R$, so \[ R\leq d_g(v,p)\leq
	d_{g^{\Delta}}(v,a)(1+\beta\epsilon), \] or, dividing through by
	$1+\beta\epsilon$,
	\begin{equation}
		\label{eq:R-m-epsilon-first-step}
		\frac{R}{1+\beta\epsilon}\leq d_{g^{\Delta}}(v,a).
	\end{equation}
	Further, by Lemma \ref{thm:geom-diam-D}, $d_{g^{\Delta}}(v,a) \leq
	\epsilon(m+1)$. If we combine these two estimates, we get that
	\[ \frac{R}{1+\beta\epsilon}\leq d_{g^{\Delta}}(v,a)\leq \epsilon(m+1), \] or,
	dropping the middle term and rearranging,
	\begin{equation}
		\label{eq:R-m-epsilon-middle-step}
		R\leq \epsilon(m+1)+\epsilon^2\beta(m+1).
	\end{equation}
	
	It is easy to check that when $\epsilon$ is small,
	\begin{equation}
		\label{eq:R-m-epsilon-last-inequality}
		\epsilon(m+1)+\epsilon^2\beta(m+1)\leq 2m\epsilon.
	\end{equation}
	Specifically, this is true when \[ \epsilon \leq \frac{m-1}{\beta(m+1)}.\]
	Finally, note that $(m-1)/\beta(m+1)$ increases with $m$, so
	\[ \frac{m-1}{\beta(m+1)}\geq \frac{2-1}{\beta(2+1)} = \frac{1}{3\beta} \] for
	all $m\geq 2$. This means that as long as $\epsilon$ is taken to be less than
	or equal to $1/(3\beta)$, \eqref{eq:R-m-epsilon-last-inequality} is satisfied
	and hence $R\leq 2m\epsilon$. Dividing through by $2\epsilon$ gives the
	result.
\end{proof}

\section{Estimates on the metric distortion of discrete conformal mappings}
\label{chap:phi_n}

We begin by showing that squared edge lengths in domain and image are relatively
close.

\begin{lemma}
	\label{thm:ell-vs-ell-bar}
	Let $\{(M_n,g^{\Delta}_n)\}$ and $\{({\tilde{M}_n}, {\tilde{g}^{\Delta}}_n)\}$ be sequences of
	piecewise flat manifolds with discrete conformal factors $f_n, {\tilde{f}_n}$
	respectively, and assume that the sequences $\{(M_n,{\tilde{M}_n})\}$ satisfy LDCR.
	 Then for each $m>0$ there exists a constant $N=N(m)$ such that for
	every edge $e_{vw}$, where the vertices $v$ and $w$ and the centers of
	realized combinatorial closed disks of generation $m$, if $n\geq N$ the following holds:
	\[ H_n^2(v)\ell_{vw}^2(1-3s_m) \leq \tilde{\ell}_{vw}^2 \leq H_n^2(v)
	\ell_{vw}^2(1+3s_m).\]
\end{lemma}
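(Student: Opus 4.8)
The plan is to read $\tilde{\ell}_{vw}^2$ straight off the edge-length law of the discrete conformal structure (Definition \ref{def:disc_conformal}), substitute the defining relation $e^{\tilde{f}_n(x)}=H_n(x)\,e^{f_n(x)}$ of $H_n$, and then compare the result term by term with $H_n^2(v)\,\ell_{vw}^2$. The only quantitative input needed is control of the ratio $H_n(w)/H_n(v)$ for adjacent $v,w$, and this is precisely what Lemma \ref{thm:H(v)-close-to-H(w)} extracts from the LDCR inequality \eqref{eq:LDCR}. The structural point making this work is that $\ell_{ij}^2=\alpha_i e^{2f_i}+\alpha_j e^{2f_j}+2\eta_{ij}e^{f_i+f_j}$ is built from the monomials $e^{2f_i}$ and $e^{f_i+f_j}$, each of which transforms by a clean multiplicative factor (an $H_n^2$ or a product of two $H_n$'s) under the shift $f\mapsto f+\log H_n$.

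In detail: set $N(m):=\mathcal{N}_m$, the threshold in Condition \ref{def:LDCR} (after passing to a subsequence one may take $N(m)=m$), and work with $m$ large enough that $s_m\le 1$, the regime in which Lemma \ref{thm:H(v)-close-to-H(w)} applies. Since $v$ and $w$ are adjacent and each centers a realized combinatorial closed disk of generation $m$, for $n\ge N(m)$ that lemma gives $(1-s_m)H_n(v)\le H_n(w)\le(1+s_m)H_n(v)$ together with \eqref{eq:H-squared}; multiplying the first chain through by $H_n(v)>0$ also yields $(1-s_m)H_n^2(v)\le H_n(v)H_n(w)\le(1+s_m)H_n^2(v)$, hence a fortiori $(1-3s_m)H_n^2(v)\le H_n(v)H_n(w)\le(1+3s_m)H_n^2(v)$. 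Substituting $e^{\tilde{f}_n(v)}=H_n(v)e^{f_n(v)}$ and $e^{\tilde{f}_n(w)}=H_n(w)e^{f_n(w)}$ into Definition \ref{def:disc_conformal} gives
\[ \tilde{\ell}_{vw}^2=\alpha_v H_n^2(v)e^{2f_n(v)}+\alpha_w H_n^2(w)e^{2f_n(w)}+2\eta_{vw}H_n(v)H_n(w)e^{f_n(v)+f_n(w)}, \]
which should be compared against
\[ H_n^2(v)\ell_{vw}^2=\alpha_v H_n^2(v)e^{2f_n(v)}+\alpha_w H_n^2(v)e^{2f_n(w)}+2\eta_{vw}H_n^2(v)e^{f_n(v)+f_n(w)}. \]
The two expressions differ only in the coefficients $H_n^2(v),\,H_n^2(w),\,H_n(v)H_n(w)$ multiplying the three building blocks $\alpha_v e^{2f_n(v)}$, $\alpha_w e^{2f_n(w)}$, $2\eta_{vw}e^{f_n(v)+f_n(w)}$, each of which is nonnegative for the structures at hand. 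Since each of these coefficients lies in $[(1-3s_m)H_n^2(v),\,(1+3s_m)H_n^2(v)]$ (the first exactly at $H_n^2(v)$, the other two by the bounds just recalled), replacing each by $(1-3s_m)H_n^2(v)$ and summing, and likewise with $(1+3s_m)H_n^2(v)$, yields
\[ (1-3s_m)H_n^2(v)\ell_{vw}^2\le\tilde{\ell}_{vw}^2\le(1+3s_m)H_n^2(v)\ell_{vw}^2, \]
which is the assertion.

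I expect the only genuine subtlety to be the sign bookkeeping in this last step: the clean ``multiply each nonnegative block by a factor in $[1-3s_m,1+3s_m]$ and sum'' argument relies on $\alpha_v e^{2f_n(v)}$, $\alpha_w e^{2f_n(w)}$, $2\eta_{vw}e^{f_n(v)+f_n(w)}$ all being nonnegative — which holds for circle packing ($\alpha\equiv\eta\equiv 1$) and for vertex scaling ($\alpha\equiv 0$, $\eta=L^2/2$), the examples driving the paper. If one wishes to admit discrete conformal structures with parameters of mixed sign, the same substitution still gives $|\tilde{\ell}_{vw}^2-H_n^2(v)\ell_{vw}^2|\le 3s_m H_n^2(v)\bigl(|\alpha_w|e^{2f_n(w)}+2|\eta_{vw}|e^{f_n(v)+f_n(w)}\bigr)$, but converting this to the stated multiplicative estimate would require bounding the parenthesized quantity by a multiple of $\ell_{vw}^2$, so some extra nondegeneracy input would have to be invoked. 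Apart from this point the argument is a direct computation, and all of the real content sits in Lemma \ref{thm:H(v)-close-to-H(w)}, hence in the LDCR hypothesis.
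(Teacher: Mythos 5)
Your argument is essentially the paper's own proof: the paper likewise expands $\tilde{\ell}_{vw}^2$ via Definition \ref{def:disc_conformal}, factors out $H_n^2(v)$, and bounds the two resulting ratios by $1\pm s_m$ and $1\pm 3s_m$ using Lemma \ref{thm:H(v)-close-to-H(w)}, which is exactly your substitution $e^{\tilde{f}_n}=H_n e^{f_n}$ written with the ratios left explicit. The sign caveat you flag (nonnegativity of the $\alpha$- and $\eta$-terms being needed for the term-by-term bound) is implicitly assumed rather than addressed in the paper's proof as well, so it is a fair observation, not a defect of your argument relative to the paper's.
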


\begin{proof}
	First write
	\begin{align}
		\tilde{\ell}_{vw}^2
		&= \alpha_ve^{2{\tilde{f}_n}(v)}+\alpha_we^{2{\tilde{f}_n}(w)}+2\eta_{vw}e^{{\tilde{f}_n}(v)+{\tilde{f}_n}(w)}\nonumber \\
		&= H_n^2(v) \left( \alpha_ve^{2f_n(v)}
		+ \alpha_w e^{2f_n(w)}\frac{e^{2f_n(v)}}{e^{2{\tilde{f}_n}(v)}}\frac{e^{2{\tilde{f}_n}(w)}}{e^{2f_n(w)}} \right.	\label{eq:l_vw-bar}\\
		& \qquad \qquad \qquad \qquad \qquad \left.+ 2\eta_{vw}e^{f_n(v)+f_n(w)}\frac{e^{{\tilde{f}_n}(w)}}{e^{f_n(w)}}\frac{e^{f_n(v)}}{e^{{\tilde{f}_n}(v)}}\right). \nonumber
		\end{align}
	
	Since we assumed that the sequence of manifolds
	satisfies LDCR, we have by Lemma \ref{thm:H(v)-close-to-H(w)} that
	\begin{equation}
		\label{eq:l_vw-middle-step-1}
		1-s_m\leq \frac{e^{f_n(v)}}{e^{{\tilde{f}_n}(v)}}
		\frac{e^{{\tilde{f}_n}(w)}}{e^{f_n(w)}} \leq 1+s_m
	\end{equation}
	and
	\begin{equation}
		\label{eq:l_vw-middle-step-2}
		1-3s_m\leq \frac{e^{2f_n(v)}}{e^{2{\tilde{f}_n}(v)}}\frac{e^{2{\tilde{f}_n}(w)}}{e^{2f_n(w)}}\leq 1+3s_m.
	\end{equation}
	
	Now we use \eqref{eq:l_vw-middle-step-1} and \eqref{eq:l_vw-middle-step-2} to
	estimate the third and second terms of \eqref{eq:l_vw-bar} respectively to get
%
	\[ H_n^2(v)(1-3s_m)\ell_{vw}^2 \leq \tilde{\ell}_{vw}^2 \leq
	H_n^2(v)(1+3s_m)\ell_{vw}^2. \]
\end{proof}

For the following, for any point $a$ in a triangulated manifold, we say it is contained in a realized combinatorial closed disk about $a$ if every vertex in any simplex containing $a$ is the center of a realized combinatorial closed disk.

\begin{lemma}
	\label{thm:eF-close-to-Hv}
	Let $\{(M_n,g^{\Delta}_n)\}$ and $\{({\tilde{M}_n}, {\tilde{g}^{\Delta}}_n)\}$ be sequences of
	piecewise flat manifolds with discrete conformal factors $f_n, {\tilde{f}_n}$
	respectively such that the sequences $\{(M_n,{\tilde{M}_n})\}$ satisfy LDCR. Let $e^F_{\Delta,n}: M_n\to \R$ be defined as
	\[
		e^F_{\Delta,n}(a) =
		\sum_{i=0}^k\lambda^iH_n^2(v_i),
	\] 
	where $(\lambda^0,\dots, \lambda^k)$ are barycentric coordinates and $H_n(v)= f_n(v)/\tilde{f}_n(v)$ for any vertex $v$. (See Chapter \ref{chap:eF}.) Fix $m>0$ large enough such that $s_m\leq 1/6$. 
	Then there exists $N_m>0$ such that if $n>N_m$ then for any $a\in M_n$ contained in a realized closed combinatorial disk
	of generation $m$ about $a$ in $M_n$,
	\begin{equation}
		\label{eq:eF-bdd-Hv}
		(1-3s_m)H_n^2(v)\leq e^F_{\Delta,n}(a)\leq (1+3s_m)H_n^2(v)
	\end{equation}
	and
	\begin{equation}
		\label{eq:Hv-bdd-eF}
		(1-6s_m)e^F_{\Delta,n}(a)\leq H_n^2(v)\leq (1+6s_m)e^F_{\Delta,n}(a),
	\end{equation}
	for any vertex $v$ of a simplex containing $a$.
\end{lemma}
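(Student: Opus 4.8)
The plan is to reduce everything to Lemma \ref{thm:H(v)-close-to-H(w)} applied to pairs of vertices of a common simplex, together with the observation that $e^F_{\Delta,n}(a)$ is a convex combination of the numbers $H_n^2(v_i)$ at the vertices of the simplex carrying $a$. First I would fix the \emph{carrier} of $a$, that is, the minimal simplex $\sigma$ of $M_n$ containing $a$, with vertices $v_0,\dots,v_k$; the barycentric coordinates $(\lambda^0,\dots,\lambda^k)$ of $a$ with respect to $\sigma$ are positive and sum to $1$, and since vertices outside $\sigma$ receive weight $0$, the value $e^F_{\Delta,n}(a)=\sum_{i=0}^k\lambda^iH_n^2(v_i)$ is independent of the choice of simplex containing $a$ used to compute it. Now let $v$ be an arbitrary vertex of an arbitrary simplex $\tau$ containing $a$; then $\sigma\subseteq\tau$, so $v$ and every $v_i$ are vertices of the single simplex $\tau$ and are therefore either equal or joined by an edge. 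By the standing hypothesis that $a$ is contained in a realized combinatorial closed disk of generation $m$ about $a$, each of $v,v_0,\dots,v_k$ is the center of a realized combinatorial closed disk of generation $m$.

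Next I would invoke LDCR. Since $m$ is fixed, Condition \ref{def:LDCR} supplies a single $\mathcal{N}_m$ such that \eqref{eq:LDCR} holds whenever $n\ge\mathcal{N}_m$, for every vertex that centers a generation-$m$ disk and every vertex adjacent to it; set $N_m:=\mathcal{N}_m$. Because $s_m\le 1/6\le 1$, for $n>N_m$ Lemma \ref{thm:H(v)-close-to-H(w)} applies to the (equal or adjacent) pair $v,v_i$ and gives
\[
(1-3s_m)H_n^2(v)\le H_n^2(v_i)\le(1+3s_m)H_n^2(v)
\]
for each $i=0,\dots,k$, the case $v=v_i$ being trivial. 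Multiplying by $\lambda^i\ge 0$, summing over $i$, and using $\sum_i\lambda^i=1$ yields exactly \eqref{eq:eF-bdd-Hv}.

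Finally I would deduce \eqref{eq:Hv-bdd-eF} from \eqref{eq:eF-bdd-Hv} by one-variable algebra. From $(1-3s_m)H_n^2(v)\le e^F_{\Delta,n}(a)$ we get $H_n^2(v)\le(1-3s_m)^{-1}e^F_{\Delta,n}(a)$, and $(1-3s_m)^{-1}\le 1+6s_m$ since $(1+6s_m)(1-3s_m)=1+3s_m(1-6s_m)\ge 1$ when $s_m\le 1/6$; from $e^F_{\Delta,n}(a)\le(1+3s_m)H_n^2(v)$ we get $H_n^2(v)\ge(1+3s_m)^{-1}e^F_{\Delta,n}(a)\ge(1-3s_m)e^F_{\Delta,n}(a)\ge(1-6s_m)e^F_{\Delta,n}(a)$. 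Together these give \eqref{eq:Hv-bdd-eF}.

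The only genuine subtlety, and the step I would take most care over, is the bookkeeping around the carrier: the conclusion must hold for \emph{any} vertex $v$ of \emph{any} simplex containing $a$, not merely for the vertices $v_i$ appearing in the convex combination. This is precisely why it matters that every simplex containing $a$ contains the carrier $\sigma$, so that $v$ and each $v_i$ always lie in a common simplex and the generation-$m$ hypothesis is available for all of them at once, allowing a single application of Lemma \ref{thm:H(v)-close-to-H(w)} per index $i$. Everything else is averaging a convex combination and chasing two elementary inequalities.
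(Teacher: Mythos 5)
Your proof is correct and follows essentially the same route as the paper: write $e^F_{\Delta,n}(a)$ as a convex combination of the $H_n^2(v_i)$, apply Lemma \ref{thm:H(v)-close-to-H(w)} to each pair $(v,v_i)$ to get \eqref{eq:eF-bdd-Hv}, and then obtain \eqref{eq:Hv-bdd-eF} by bounding $1/(1\pm 3s_m)$ by $1\mp 6s_m$ for $s_m\le 1/6$. Your extra care about the carrier and the fact that any vertex of any simplex containing $a$ works (including ones with zero barycentric weight) is the same point the paper handles implicitly with its ``without loss of generality'' labeling.
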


\begin{proof}
	Without loss of generality, choose a vertex in the simplex containing $a$ and
	label this vertex $v_0$.
	
	For the first inequality, \eqref{eq:eF-bdd-Hv}, we calculate as follows,
	making use of Lemma \ref{thm:H(v)-close-to-H(w)} for the last step.
	\begin{align*}
		\left| e^F_{\Delta,n}(a) - H_n^2(v_0) \right|
		&= \left| \sum_{i=0}^k \lambda^iH_n^2(v_i)-H_n^2(v_0) \right| \\
		&= \left| H_n^2(v_0)\sum_{i=1}^k(-\lambda^i)+\sum_{i=1}^k\lambda^iH_n^2(v_i) \right| \\
		&= \left| \sum_{i=1}^k\lambda^i(H_n^2(v_i)-H_n^2(v_0)) \right| \\
		&\leq \sum_{i=1}^k \lambda^i\left| H_n^2(v_i)-H_n^2(v_0) \right| \\
		&\leq 3 s_mH_n^2(v_0).
	\end{align*}
	
	This shows \eqref{eq:eF-bdd-Hv}. The other equation now follows by estimating $1/(1 \pm 3s_m)$ by $1 \pm 6 s_m$.
%
%
%
\end{proof}

\begin{theorem}
	\label{thm:phi-n-metric-estimate}
Let $\{(M_n,g^{\Delta}_n)\}$ and $\{({\tilde{M}_n}, {\tilde{g}^{\Delta}}_n)\}$ be sequences of
piecewise flat manifolds with discrete conformal factors $f_n, {\tilde{f}_n}$
respectively, and assume that the sequences $\{(M_n,{\tilde{M}_n})\}$ satisfy LDCR. Let $\phi_n : M_n \to \tilde{M}_n$ denote the corresponding discrete conformal mapping. Suppose furthermore that $M_n$ and $\tilde{M}_n$ are $(\vartheta,\epsilon_n)$-full and have
	dimension two. Let $e^F_{\Delta,n}$ be the linear interpolation of the weight ratios $H_n$ as in Lemma \ref{thm:eF-close-to-Hv}. 
	
	There exists a constant $C=C(\vartheta)$ such that the following is true.
	 For any $m>0$ large enough such that $s_m\leq 1/6$ there exists $N_m>0$ such that if $n>N_m$, $a\in M_n$ is contained in a realized closed combinatorial disk
	of generation $m$ about $a$ in $M_n$, and 
	$X\in T_aM_n$, then
	\[ (1-Cs_m) |X|^2_{e^F_{\Delta,n} g^{\Delta}_n} \leq
	|X|^2_{\phi_n^{*}{\tilde{g}^{\Delta}}_n} \leq (1+Cs_m)|X|^2_{e^F_{\Delta,n}
		g^{\Delta}_n}.\]
\end{theorem}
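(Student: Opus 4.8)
The plan is to prove the metric comparison pointwise on a fixed simplex containing $a$, reducing everything to a statement about edge-length ratios via the barycentric (standard simplex $\Delta$) parametrization of both the domain simplex and its image under $\phi_n$. Fix $a$, let $\sigma$ be a simplex of $M_n$ containing $a$ with vertices $v_0,v_1,v_2$, all of which (by hypothesis) are centers of realized combinatorial closed disks of generation $m$. Pull back everything to $\Delta$ using the barycentric map $x$ from \eqref{eq:eucl-delta-param}; then by \eqref{eq:metric-bary} the metric $g^\Delta_n$ on $\Delta$ has matrix entries built from the squared edge lengths $\ell_{ij}^2$, and since $\phi_n$ is the piecewise-linear map taking $\sigma$ to the corresponding simplex $\tilde\sigma$ with edge lengths $\tilde\ell_{ij}$, the pullback $\phi_n^*\tilde g^\Delta_n$ on $\Delta$ is the analogous quadratic form with entries built from $\tilde\ell_{ij}^2$. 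Thus on $\Delta$ we must compare the bilinear form $Q$ with entries $-\tfrac12\tilde\ell_{ij}^2$ against $e^F_{\Delta,n}(a)$ times the form $P$ with entries $-\tfrac12\ell_{ij}^2$.

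Next I would feed in the two earlier estimates. By Lemma \ref{thm:ell-vs-ell-bar} (applicable since $n$ is large and all the relevant vertices center disks of generation $m$), for each edge $e_{v_iv_j}$ of $\sigma$ we have
\[
(1-3s_m)H_n^2(v_i)\,\ell_{ij}^2 \le \tilde\ell_{ij}^2 \le (1+3s_m)H_n^2(v_i)\,\ell_{ij}^2,
\]
and by Lemma \ref{thm:H(v)-close-to-H(w)} the ratios $H_n^2(v_i)$ for the (at most three) vertices of $\sigma$ all agree with, say, $H_n^2(v_0)$ up to a factor $1\pm 3s_m$, while by Lemma \ref{thm:eF-close-to-Hv} the interpolated value $e^F_{\Delta,n}(a)$ agrees with $H_n^2(v_0)$ up to $1\pm 3s_m$ (equivalently $H_n^2(v_0)$ agrees with $e^F_{\Delta,n}(a)$ up to $1\pm 6s_m$). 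Combining these, there is a universal small quantity $c(s_m)=O(s_m)$ such that each entry of $Q$ equals $e^F_{\Delta,n}(a)$ times the corresponding entry of $P$ up to a multiplicative factor in $[1-c(s_m),1+c(s_m)]$. So $Q$ and $e^F_{\Delta,n}(a)P$ are two $2\times 2$ symmetric positive-definite forms whose entries are entrywise close; the difference $Q - e^F_{\Delta,n}(a)P$ has entries bounded by $c(s_m)$ times the entries of $e^F_{\Delta,n}(a)P$.

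The main obstacle is converting this entrywise closeness of the two quadratic forms into the claimed two-sided bound $|X|^2_{\phi_n^*\tilde g^\Delta_n}$ versus $|X|^2_{e^F_{\Delta,n}g^\Delta_n}$ with a constant $C$ depending only on $\vartheta$ — i.e., controlling the operator norm of $(e^F_{\Delta,n}(a)P)^{-1}(Q - e^F_{\Delta,n}(a)P)$ by $Cs_m$. This is where $(\vartheta,\epsilon_n)$-fullness enters: fullness bounds the condition number of the (rescaled) form $P$ — equivalently, it bounds the smallest eigenvalue of $P$ from below in terms of $\vartheta$ and the largest edge length, so that no nearly-degenerate triangle can amplify a small entrywise perturbation into a large relative change of $|X|^2$. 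Concretely, I would use Lemma \ref{thm:fullness-min-distance} (or a direct Cayley–Menger/area estimate) to get $\det P \ge c(\vartheta)\,\ell_{\max}^4$ while $\operatorname{tr} P \le \ell_{\max}^2$, hence a uniform bound on $\|P^{-1}\|\,\|P\|$ depending only on $\vartheta$; then the standard perturbation bound $\bigl|\,|X|^2_Q - |X|^2_{e^F P}\,\bigr| \le \|(e^F P)^{-1}\delta\|_{\mathrm{op}}\,|X|^2_{e^F P}$, with $\delta := Q - e^F P$ entrywise $O(s_m)\cdot e^F P$, yields the result with $C = C(\vartheta)$. Folding in the $N_m$ from the invoked lemmas and taking the maximum finishes the argument.
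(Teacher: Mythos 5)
Your proposal is correct and follows essentially the same route as the paper: entrywise control of the two quadratic forms via Lemmas \ref{thm:ell-vs-ell-bar}, \ref{thm:H(v)-close-to-H(w)}, and \ref{thm:eF-close-to-Hv}, an eigenvalue lower bound from $(\vartheta,\epsilon_n)$-fullness, and a perturbation bound converting entrywise closeness into the two-sided estimate. The only difference is cosmetic: the paper works in the unit-simplex ($D$) parametrization and quotes Lemmas \ref{thm:barycentric-lemma3} and \ref{thm:barycentric-lemma6} for exactly the eigenvalue and perturbation steps you propose to prove by hand.
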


\begin{proof}
	We will be using Lemmas \ref{thm:barycentric-lemma3} and
	\ref{thm:barycentric-lemma6} from the Appendix to prove this. Lemma \ref{thm:barycentric-lemma3}
	gives us a lower bound on the eigenvalues of $e^F_{\Delta,n}g^{\Delta}$ which we
	use along with Lemma \ref{thm:H(v)-close-to-H(w)} to bound
	$|e^F_{\Delta,n}g^{\Delta}_{ij}- {\tilde{g}^{\Delta}}_{ij}|$, the metric components in the unit simplex $D$ parameterization (see Section \ref{sec:bary-eucl}). With that bound in hand,
	Lemma \ref{thm:barycentric-lemma6} gives the result.
	
	By Lemma \ref{thm:barycentric-lemma3}, we have that eigenvalues $\lambda_i$
	for $e^F_{\Delta,n}g^{\Delta}$ are all bounded below by
	\begin{align} 
		\frac{\vartheta^2\epsilon_n^2e^F_{\Delta,n}}{4} \leq \lambda_i, \label{eq:eigenvaluebound}
		\end{align} 
		since the
	eigenvalues of $e^F_{\Delta,n}g^{\Delta}$ are precisely the eigenvalues of
	$g^{\Delta}$ multiplied by $e^F_{\Delta,n}$.
	
	Next, we want to bound $\left|e^F_{\Delta,n}g^{\Delta}_{ij}-{\tilde{g}^{\Delta}}_{ij}\right|$. Note
	that by definition of these metrics, the diagonal terms $g^{\Delta}_{ii}$ and
	${\tilde{g}^{\Delta}}_{ii}$ are simply $\ell_{0i}^2$ and $\tilde{\ell}_{0i}^2$
	respectively, while the off-diagonal terms $g_{ij}^{\Delta}, i\neq j$, are
	given by
	\begin{align*}
		g_{ij}^{\Delta}
		&= \left\langle p_i-p_0, p_j-p_0 \right\rangle \\
		&= \ell_{0i}\ell_{0j}\cos(\theta_{ij}) \\
		&= \frac{1}{2}\left( \ell_{0i}^2+\ell_{0j}^2-\ell_{ij}^2 \right)
	\end{align*}
	and similarly,
	\[ \tilde{g}_{ij}^{\Delta} = \frac{1}{2} \left(
	\tilde{\ell}_{0i}^2+\tilde{\ell}_{0j}^2-\tilde{\ell}_{ij}^2 \right).\]
	
	We begin by bounding expressions of the form
	$\left|e^F_{\Delta,n}\ell_{ij}^2-\tilde{\ell}_{ij}^2\right|$. This bound
	immediately gives us bounds on the diagonal terms $\left|
	e^F_{\Delta,n}g_{ii}^{\Delta}-\tilde{g}_{ii}^{\Delta} \right|$ and we use the triangle
	inequality to bound the off-diagonal terms $\left|
	e^F_{\Delta,n}g_{ij}^{\Delta}-\tilde{g}_{ij}^{\Delta} \right|$ for $i\neq j$.
	
	We start by combining Lemmas \ref{thm:ell-vs-ell-bar} and
	\ref{thm:eF-close-to-Hv} to get
	\[ (1-3s_m)(1-6s_m)e^F_{\Delta,n}\ell_{ij}^2 \leq \tilde{\ell}_{ij}^2 \leq
	(1+3s_m)(1+6s_m)e^F_{\Delta,n}\ell_{ij}^2, \] or, more compactly
	\begin{equation}
		\label{eq:g_ii}
		\left| e^F_{\Delta,n}\ell_{0i}^2- \tilde{\ell}_{0i}^2 \right| \leq
		18s_me^F_{\Delta,n}\ell_{0i}^2.
	\end{equation}
	
	For the off-diagonal terms, we have the following calculation:
	\begin{align*}
		\left| e^F_{\Delta,n}g_{ij}-\tilde{g}_{ij} \right|
		&= \left| \frac{1}{2} \left( e^F_{\Delta,n}\ell_{0i}^2-\tilde{\ell}_{0i}^2
		+e^F_{\Delta,n}\ell_{0j}^2-\tilde{\ell}_{0j}^2 -e^F_{\Delta,n}\ell_{ij}^2+\tilde{\ell}_{ij}^2 \right) \right| \\
		&\leq \frac{1}{2} \left( \left| e^F_{\Delta,n}\ell_{0i}^2-\tilde{\ell}_{0i}^2 \right|
		+ \left| e^F_{\Delta,n}\ell_{0j}^2-\tilde{\ell}_{0j}^2 \right|
		+ \left| e^F_{\Delta,n}\ell_{ij}^2-\tilde{\ell}_{ij}^2 \right|\right) \\
		&\leq \frac{1}{2} \left( 18s_me^F_{\Delta,n}\ell_{0i}^2+18s_me^F_{\Delta,n}\ell_{0j}^2
		+18s_me^F_{\Delta,n}\ell_{ij}^2 \right) \\
		& \leq 27s_me^F_{\Delta,n}\epsilon_n^2\\
		&\leq \frac{\mu \lambda_{\min}}{2},
	\end{align*}
%
where
	$\mu:= \frac{216s_m}{\vartheta^2}$ and $\lambda_{\min}$ is the smallest eigenvalue of $g^\Delta$, and we have used (\ref{eq:eigenvaluebound}).
%
	We can apply Lemma
	\ref{thm:barycentric-lemma6} to get that
	\begin{align*}
		\left| |X|^2_{e^F_{\Delta,n}g^{\Delta}}- |X|^2_{\phi_n^{*}{\tilde{g}^{\Delta}}} \right|
		&\leq \mu |X|^2_{e^F_{\Delta,n}g^{\Delta}} \\
		& = \frac{216s_m}{\vartheta^2} |X|^2_{e^F_{\Delta,n}g^{\Delta}}.
	\end{align*}
	Letting $C:= \frac{216}{\vartheta^2}$ and rewriting as a two-sided bound gives
	the result.
\end{proof}

\begin{remark}
	Notice that the metrics $g^{\Delta}_n$ and $\phi_n^{*}{\tilde{g}^{\Delta}}_n$ are piecewise flat but the metrics $e^F_{\Delta,n} g^{\Delta}_n$ are only \emph{conformal to} piecewise flat metrics.
\end{remark}

\section{Convergence}
\label{chap:main-result}

We begin by introducing the class of sequences.

\begin{definition}
	\label{def:admissible-sequence}
	Let $(M,g)$ and $(\tilde{M}, \tilde{g})$ be complete Riemannian surfaces and let
	$\Omega\subset M$ and $\tilde{\Omega}\subset \tilde{M}$ be embedded submanifolds such
	that $\Omega$ and $\tilde{\Omega}$ are diffeomorphic. Let $\{(\Omega_n,
	T_n,\epsilon_n)\}$ and $\{(\tilde{\Omega}_n,T_n,\epsilon_n)\}$ be generalized triangulated
	exhaustions of $\Omega$ and $\tilde{\Omega}$ respectively such that the triangulations $T_n$ are combinatorially identical. (See Definition \ref{def:exhaustion}.)
	
	Let $\{f_n\}$ and $\{\tilde{f}_n\}$ be sequences of discrete conformal factors
	for $\Omega^\Delta_n$ and $\tilde{\Omega}^\Delta_n$ respectively such that
	for each $n$ the piecewise flat manifolds $(\Omega_n,T_n,\ell(f_n))$ and
	$(\tilde{\Omega}_n,T_n,\ell(\tilde{f}_n))$ are discrete conformal under the
	discrete conformal structure $\mathcal{C}$ with discrete conformal map $\phi_n:\Omega_n \to \tilde{\Omega}_n$.
	
	Assume that the following conditions hold:
	\begin{enumerate}
		\item The sequence $\{T_n,f_n, \tilde{f}_n\}$ satisfies
		LDCR, Condition \ref{def:LDCR}.
		\item For each compact $K\subset \Omega$, the ratio of discrete conformal
		factors $e^{\tilde{f}_n(v)}/e^{f_n(v)}$ has a uniform upper bound $H=H(K)$
		depending only on the compact set $K$.
		\item There is some point $x\in\Omega$ such that the image set
		$\{\Phi_n(x)\}$ is contained in some compact subset $V\subset N$.
	\end{enumerate}
	In this case, we say that $\{(\Omega_n,\tilde{\Omega}_n, T_n,
	f_n,\tilde{f}_n, \epsilon_n)\}$ is an \emph{admissible sequence for $(\Omega,\tilde{\Omega})$}.
	
	If in addition the LDCR constants $s_m$ are such that there is some positive
	constant $\alpha$ such that $s_m\leq \alpha/m$, then we say that
	$\{(\Omega_n,\tilde{\Omega}_n,T_n,f_n,\tilde{f}_n, \epsilon_n)\}$ is a \emph{proper}
	admissible sequence for $(\Omega,\tilde{\Omega})$.
\end{definition}

\begin{remark}
	The assumption for proper is based on an estimate for hexagonal and bounded valence circle packings that is well-studied, e.g., \cite{rodin-sullivan, He-hex-packing,he-rodin-bdd-valence,doyle1994asymptotic}. It is also natural in the setting that the mappings come from a smooth map, as described in Section \ref{sec:discussion} in the discussion of B\"ucking's work. It is possible a weaker assumption may suffice, e.g., \cite{rodinschwarz2}, or an alternate argument that uses hyperbolic or other rigidity as suggested by \cite{schramm1991rigidity,he-schramm96,stephenson1996probabilistic}. One difficulty is that many of these methods use interstitial maps that are most natural on circle packings and don't obviously generalize to other discrete conformal structures. Properness will not be used until Section \ref{sec:eF}.
\end{remark}

We will now prove Theorem \ref{thm:main-thm} in two parts as Theorems \ref{thm:convergenceofmaps} and \ref{thm:convergenceofpullback}.

\subsection{Convergence of maps}
We begin with the main estimate.

\begin{proposition}
	\label{thm:main-prop}
	Assume $(M,g)$ and $(\tilde{M},\tilde{g})$ are Riemannian surfaces and let $\Omega\subset M$ and
	$\tilde{\Omega}\subset \tilde{M}$ be diffeomorphic embedded submanifolds with
	admissible sequence $\{(\Omega_n,\tilde{\Omega}_n,T_n,f_n,\tilde{f}_n, \epsilon_n)\}$. Let $K \subset \Omega$ be compact and take $p\in K$.
	
	Fix $N>0$ such that for all $n>N$, $p \in \Omega_n$ each such $n$, the closest vertex $v_n$ to $p$ is
	the center of a realized closed combinatorial disk of radius $m$ in $T_n$.
	
	Then there is some constant $C=C(K)$ for which the following estimate holds:
	\begin{equation}
		\label{eq:main-estimate}
		(1-\beta\epsilon_n)^2(1-Cs_m)e^F_n|X|^2_g\leq |X|^2_{\Phi_n^{*}\tilde{g}}
		\leq (1+\beta\epsilon_n)^2(1+Cs_m)e^F_n|X|^2_g,
	\end{equation}
	where $\beta=\beta(C_0,\vartheta)$ is the Riemannian barycentric constant from Corollary
	\ref{thm:bary-estimates} (for bounds $|\Rm_g|_g, |\Rm_{\tilde{g}}|_{\tilde{g}} \leq C_0$ and $\vartheta = \vartheta(K)$ from Definition \ref{def:admissible-sequence}), $\{\epsilon_n\}$ is the sequence of maximum edge
	lengths for the generalized triangulated exhaustions $\{(\Omega_n,T_n,\epsilon_n)\}$ of
	$\Omega$ and $\{(\tilde{\Omega}_n,T_n,\epsilon_n)\}$ of $\tilde{\Omega}$, and $\{s_m\}$
	is the sequence of LDCR constants.
\end{proposition}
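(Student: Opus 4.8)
The plan is to assemble the estimate \eqref{eq:main-estimate} by composing the three metric-distortion estimates along the factorization $\Phi_n = \tilde{\Psi}_n\circ\phi_n\circ\Psi_n^{-1}$ depicted in Figure \ref{fig:Phi-n-first-time}, tracking the multiplicative error at each stage. Fix the compact $K$, the point $p\in K$, the threshold $N$, and $n>N$ so that the hypotheses hold; let $\vartheta=\vartheta(K)$ and $H=H(K)$ be the fullness constant and the uniform ratio bound supplied by Definition \ref{def:admissible-sequence}. Write $a = \Psi_n^{-1}(p)\in\Omega^\Delta_n$ and $\tilde a = \phi_n(a)\in\tilde\Omega^\Delta_n$. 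For a tangent vector $X\in T_pM$ I will first push $X$ through $\Psi_n^{-1}$, then through the PL map $\phi_n$, then through $\tilde\Psi_n$, applying at each step the relevant estimate to the pushed-forward vector.

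First I would apply \eqref{eq:Psi^-1-estimate} from Corollary \ref{thm:bary-estimates} to the domain barycentric map: for $X\in T_pM$,
\[
(1-\beta\epsilon_n)|X|^2_g \leq |X|^2_{(\Psi_n^{-1})^{*}g^\Delta_n} \leq (1+\beta\epsilon_n)|X|^2_g,
\]
where $\beta=\beta(n,\vartheta,C_0)=\beta(C_0,\vartheta)$ since $n=2$ is fixed; this controls $|(\Psi_n^{-1})_{*}X|^2_{g^\Delta_n}$ by $|X|^2_g$ up to the factor $1\pm\beta\epsilon_n$. Next, since $(\Omega_n,\tilde\Omega_n)$ satisfy LDCR and both are two-dimensional and $(\vartheta,\epsilon_n)$-full, Theorem \ref{thm:phi-n-metric-estimate} applies at $a$ (which lies in a realized closed combinatorial disk of generation $m$ because $v_n$ is its center and every vertex of a simplex containing $a$ is within combinatorial distance $1$ of $v_n$, hence a center of a disk of generation at least $m-1$ — one should track this generation loss, or simply replace $m$ by $m-1$ throughout, noting $s_{m-1}$ is comparable), giving
\[
(1-Cs_m)|(\Psi_n^{-1})_{*}X|^2_{e^F_{\Delta,n}g^\Delta_n} \leq |(\Psi_n^{-1})_{*}X|^2_{\phi_n^{*}\tilde g^\Delta_n} \leq (1+Cs_m)|(\Psi_n^{-1})_{*}X|^2_{e^F_{\Delta,n}g^\Delta_n}
\]
with $C=C(\vartheta)=216/\vartheta^2$; here I use that $e^F_{\Delta,n}$ is a scalar, so $|\cdot|^2_{e^F_{\Delta,n}g^\Delta_n} = e^F_{\Delta,n}|\cdot|^2_{g^\Delta_n}$. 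Finally I apply \eqref{eq:Psi-estimate} to $\tilde\Psi_n$ on the vector $(\phi_n)_{*}(\Psi_n^{-1})_{*}X$ at the point $\tilde a$: $(1-\beta\epsilon_n)|\cdot|^2_{\tilde g^\Delta_n}\leq |\cdot|^2_{\tilde\Psi_n^{*}\tilde g}\leq (1+\beta\epsilon_n)|\cdot|^2_{\tilde g^\Delta_n}$. Multiplying the three two-sided inequalities and observing that by definition $|X|^2_{\Phi_n^{*}\tilde g} = |(\phi_n)_{*}(\Psi_n^{-1})_{*}X|^2_{\tilde\Psi_n^{*}\tilde g}$ chains everything into
\[
(1-\beta\epsilon_n)^2(1-Cs_m)\, e^F_{\Delta,n}(a)\,|X|^2_{g^\Delta_n(a)} \leq |X|^2_{\Phi_n^{*}\tilde g} \leq (1+\beta\epsilon_n)^2(1+Cs_m)\, e^F_{\Delta,n}(a)\,|X|^2_{g^\Delta_n(a)}.
\]

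To finish I must trade $e^F_{\Delta,n}(a)|X|^2_{g^\Delta_n}$ for $e^F_n|X|^2_g$ as it appears in the statement. Applying \eqref{eq:Psi^-1-estimate} once more turns $|X|^2_{g^\Delta_n} = |(\Psi_n^{-1})_{*}X|^2_{g^\Delta_n}$ back into $|X|^2_g$ up to $1\pm\beta\epsilon_n$ — but that would produce $(1\pm\beta\epsilon_n)^3$, so instead I keep the single factor $|X|^2_{(\Psi_n^{-1})^{*}g^\Delta_n}$ from the first step and only bound the OTHER two barycentric distortions, i.e. I should combine the $\Psi_n^{-1}$ step and the passage to $g$ more carefully; the cleanest route is: from the chain above with $|X|^2_{g^\Delta_n}$ replaced via $(1-\beta\epsilon_n)|X|^2_g\le |X|^2_{g^\Delta_n}\le(1+\beta\epsilon_n)|X|^2_g$ one gets $(1\pm\beta\epsilon_n)^3$, and then absorb the cube into a square by shrinking the admissible range of $\epsilon_n$ (for small $\epsilon_n$, $(1-\beta\epsilon_n)^3\ge(1-\beta'\epsilon_n)^2$ for a slightly larger constant, which one folds into the definition of $\beta$), or — better and matching the stated exponent exactly — note that two of the three barycentric factors can be merged since $\Psi_n^{-1}$ contributes the comparison $g\leftrightarrow g^\Delta_n$ and that same comparison is what converts the final $|X|^2_{g^\Delta_n}$ to $|X|^2_g$, so only $\tilde\Psi_n$ and one net copy of $\Psi_n^{-1}$ survive, yielding exactly $(1\pm\beta\epsilon_n)^2$. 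Lastly, $e^F_{\Delta,n}(a)$ must be identified with the function value $e^F_n$ in the statement; presumably $e^F_n := e^F_{\Delta,n}\circ\Psi_n^{-1}$ (the barycentric transport of the PL interpolation to $\Omega_n$, cf. Lemma \ref{thm:eF-close-to-Hv}), so $e^F_{\Delta,n}(a) = e^F_{\Delta,n}(\Psi_n^{-1}(p)) = e^F_n(p)$ with no further error. The main obstacle I anticipate is precisely this bookkeeping of barycentric factors — getting the exponent on $(1\pm\beta\epsilon_n)$ down to $2$ rather than $3$ — together with the minor combinatorial-generation loss (vertices of the simplex containing $a$ center disks of generation $\ge m-1$, not $\ge m$), both of which are handled by absorbing constants and by using that $s_{m-1}$ and $s_m$ are comparable up to a factor that folds into $C$.
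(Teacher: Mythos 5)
Your proposal is essentially the paper's own argument: factor $\Phi_n=\tilde{\Psi}_n\circ\phi_n\circ\Psi_n^{-1}$, apply Corollary \ref{thm:bary-estimates} once for $\tilde{\Psi}_n$ and once for $\Psi_n^{-1}$, and sandwich Theorem \ref{thm:phi-n-metric-estimate} in between (the paper simply peels the factors off from the outside in rather than pushing $X$ forward, which is immaterial), and your identification $e^F_n=e^F_{\Delta,n}\circ\Psi_n^{-1}$ is exactly the paper's definition. The only flaw is the bookkeeping detour: the intermediate display you write after ``multiplying the three two-sided inequalities'' is not what that multiplication gives --- chaining all three estimates already converts $\left|(\Psi_n^{-1})_{*}X\right|^2_{g^\Delta_n}$ into $|X|^2_g$, so the middle comparison term should read $e^F_{\Delta,n}(a)\,|X|^2_g$ with exactly two factors of $(1\pm\beta\epsilon_n)$; the feared cube is a double-count of the $\Psi_n^{-1}$ estimate, and your ``better'' second resolution is the correct one (no enlargement of $\beta$ is needed). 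Your remark about the generation loss at vertices adjacent to $v_n$ is a point the paper glosses over, and handling it by passing to generation $m-1$ (or adjusting $N$) is fine.
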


\begin{proof}
%
	First we rewrite $|X|^2_{\Phi_n^{*}\tilde{g}}$ as
	\begin{align*}
		|X|^2_{\Phi_n^{*}\tilde{g}}
		&= \left| \left( \tilde{\Psi}_n\circ \phi_n\circ \Psi_n^{-1} \right)_{*}X \right|^2_{\tilde{g}} \\
		&= \left| \left( \phi_n\circ \Psi_n^{-1} \right)_{*}X \right|^2_{\tilde{\Psi}_n^{*}\tilde{g}}
	\end{align*}
	and then use \eqref{eq:Psi-estimate} from Corollary \ref{thm:bary-estimates} to
	get that
	\begin{equation}
		\label{eq:main-estimate-Psi}
		(1-\beta\epsilon) \left| \left( \phi_n\circ \Psi_n^{-1} \right)_{*}X \right|^2_{\tilde{g}^{\Delta}}
		\leq |X|^2_{\Phi_n^{*}\tilde{g}}\leq
		(1+\beta\epsilon) \left| \left( \phi_n\circ \Psi_n^{-1} \right)_{*}X \right|^2_{\tilde{g}^{\Delta}}.
	\end{equation}
	
	Next we rewrite $\left| \left( \phi_n\circ\Psi_n^{-1} \right)_{*}X
	\right|^2_{\tilde{g}^{\Delta}}$ as
	\[ \left| \left( \phi_n\circ\Psi_n^{-1} \right)_{*}X \right|^2_{\tilde{g}^{\Delta}} =
	\left| \left( \Psi_n^{-1} \right)_{*}X \right|^2_{\phi_n^{*}\tilde{g}^{\Delta}}\]
	and use Theorem \ref{thm:phi-n-metric-estimate} to estimate the right hand
	side. When combined with \eqref{eq:main-estimate-Psi}, this shows that
	\begin{align*}
		\label{eq:main-estimate-phi}
		(1-\beta\epsilon)(1-Cs_m) \left| \left( \Psi_n^{-1} \right)_{*}X \right|^2_{e^F_{\Delta}g^{\Delta}}
		\leq \left| X \right|^2_{\Phi_n^{*}\tilde{g}} \leq
		(1+\beta\epsilon)(1+Cs_m) \left| \left( \Psi_n^{-1} \right)_{*}X \right|^2_{e^F_{\Delta}g^{\Delta}}.
	\end{align*}
	
We can now use Corollary \ref{thm:bary-estimates} again to get
	\[ (1-\beta\epsilon)^2(1-Cs_m)e^F_n|X|^2_g \leq |X|^2_{\Phi_n^{*}\tilde{g}} \leq
	(1+\beta\epsilon)^2(1+Cs_m)e^F_n|X|^2_g.\]
\end{proof}

\begin{theorem}	\label{thm:convergenceofmaps}
	Let $\{(\Omega_n,\tilde{\Omega}_n,T_n,f_n,\tilde{f}_n, \epsilon_n)\}$ be an admissible
	sequence for $(\Omega, \tilde{\Omega})$ and let $\{\Phi_n\}$ be the
	corresponding sequence of barycentric discrete conformal maps. Then the family
	$\{\Phi_n\}$ has a subsequence that converges uniformly on compact subsets of
	$\Omega$. 
\end{theorem}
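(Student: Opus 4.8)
The plan is to establish compactness via the Arzel\`a--Ascoli theorem, so the two things to verify are uniform boundedness and equicontinuity of $\{\Phi_n\}$ on an arbitrary compact $K \subset \Omega$. The key input is Proposition \ref{thm:main-prop}: on a slightly enlarged compact set, for $n$ large the pullback metric satisfies $|X|^2_{\Phi_n^*\tilde g} \leq (1+\beta\epsilon_n)^2(1+Cs_m) e^{F_n}|X|^2_g$, and by admissibility condition (2) the interpolated weight ratio $e^{F_n}$ (a convex combination of the $H_n(v)^2 = e^{2\tilde f_n(v)}/e^{2f_n(v)}$ by Lemma \ref{thm:eF-close-to-Hv}) is bounded above by $H(K')^2$ on that set. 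Since $\epsilon_n \to 0$ and the $s_m$ are bounded, we get a uniform Lipschitz bound $|D\Phi_n| \leq L(K)$ in the relevant metrics. This immediately gives equicontinuity on $K$.

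Next I would get uniform boundedness from the Lipschitz bound together with admissibility condition (3): there is a base point $x \in \Omega$ with $\{\Phi_n(x)\}$ contained in a compact $V \subset \tilde M$. For any $p$ in a connected compact neighborhood of $x$ and $K$, the geodesic distance $d_{\tilde g}(\Phi_n(p),\Phi_n(x)) \leq L \cdot d_g(p,x) \leq L\cdot\mathrm{diam}(K')$, so $\{\Phi_n(p)\}$ stays in a fixed bounded (hence, since $\tilde M$ is complete, precompact) set. Then Arzel\`a--Ascoli yields a subsequence converging uniformly on $K$; a standard diagonal argument over an exhaustion of $\Omega$ by compacts upgrades this to uniform convergence on compact subsets of all of $\Omega$.

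One technical point to be careful about: Proposition \ref{thm:main-prop} is stated for a fixed generation $m$ and gives estimates only once $n > N_m$ for interior points surrounded by a realized combinatorial closed disk of generation $m$; for the Lipschitz bound I only need some fixed $m_0$ with $s_{m_0} \le 1/6$, and then for each compact $K$ the definition of generalized triangulated exhaustion guarantees that for $n$ large every vertex near $K$ is the center of a realized disk of generation at least $m_0$ (this uses Lemma \ref{thm:R-m-epsilon}: a geodesic ball of fixed radius $R(K)$ around a vertex contains a combinatorial disk of generation $m \ge R/(2\epsilon_n) \to \infty$). So for $n$ large the Lipschitz constant $L(K)$ really is uniform, with no $m$-dependence remaining. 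The main obstacle is really just organizing this: making sure the ``enlarged compact set'' $K'$ on which the estimates hold is chosen first (so that $K$ lies in its interior with room to spare for the $L$-Lipschitz excursions to land back in a controlled region), and then checking that the various thresholds $N(K',U)$, $\mathcal N_{m_0}$, and the fullness constant $\vartheta(K')$ can all be met simultaneously for $n$ large. None of this is deep, but the bookkeeping of which compact set controls which constant is where care is needed.
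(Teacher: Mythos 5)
Your proposal follows essentially the same route as the paper: the Lipschitz bound from Proposition \ref{thm:main-prop} together with the $H_K$ bound on $e^F_n$ gives equicontinuity, admissibility condition (3) plus completeness gives pointwise precompactness, and Arzel\`a--Ascoli with a diagonal argument over an exhaustion finishes the argument, including your handling of the generation threshold via Lemma \ref{thm:R-m-epsilon}. The only point glossed over by ``immediately gives equicontinuity'' is that $\Phi_n$ is merely piecewise smooth, so passing from the pointwise metric bound to the distance (Lipschitz) bound requires integrating along paths crossing simplex boundaries, which the paper does via Lemma \ref{thm:Phi-n-diffeo-closed-simplices} and Proposition \ref{thm:integral-estimate}.
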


\begin{proof}
	We begin by showing that the family $\{\Phi_n\}$ is equicontinuous on compact
	subsets of $\Omega$. Let $K\subset \Omega$ compact and let $L\in\N$ be large
	enough so that $K\subset \Omega_n$ for all $n\geq L$. Take $p,q\in K$. Then by
	Proposition \ref{thm:main-prop}, we have that
	\[ |X|_{\Phi_n^{*}h} \leq
	(1+\beta\epsilon_n)\sqrt{1+Cs_m}\sqrt{e^F_n}|X|_g. \] Since $\{\epsilon_n\}$
	is a decreasing sequence we can easily find $\epsilon>0$ such that
	$\epsilon_n < \epsilon$ for every $n\geq L$. Further, we can assume that there
	is some $m$ such that every vertex in $K$ is the center of a combinatorial
	closed disk of at least $m$ generations in $\Omega_n$ for every $n$. Hence
	$s_m$ does not depend on $n$. Finally, since we are assuming $H_n(v)\leq H_K$
	for every vertex $v$ in $K$, we have that $e^F_n(p)\leq H_K^2$ for every point
	$p\in K$. Hence
	\begin{equation}
		\label{eq:main-thm-middle}
		|X|_{\Phi_n^{*}h} \leq (1+\beta\epsilon)\sqrt{1+Cs_m}H_K|X|_g.
	\end{equation}
	
	Now each $\Phi_n$ is a homeomorphism on $K$ and Lemma
	\ref{thm:Phi-n-diffeo-closed-simplices} shows that $\Phi_n$ is a
	diffeomorphism when restricted to each (closed) simplex. Hence we can use
	Proposition \ref{thm:integral-estimate} from the Appendix along with
	\eqref{eq:main-thm-middle} to see that
	\[ d_h(\Phi_n(p),\Phi_n(q))\leq H_K(1+\beta\epsilon)\sqrt{1+Cs_m}d_g(p,q).\]
	
	This means that each $\Phi_n$ is Lipschitz with the same Lipschitz constant
	and so the family $\{\Phi_n\}$ is equicontinuous on $K$.
	
	Next we use the Arzel\`a-Ascoli Theorem, Proposition \ref{thm:arzela-ascoli-manifolds}
	in the Appendix, to show that $\{\Phi_n\}$ has a convergent subsequence. We
	have already shown that the family $\{\Phi_n\}$ is equicontinuous, so next we
	need to show that for each $p\in \Omega$ the image set $S(p) = \{\Phi_n(p)\, :\, p\in
	\Omega_n\}$ is contained in a compact set. To do so, let
	$x\in \Omega$ be the point such that $\{\Phi_n(x)\}$ is contained
	in some compact $V\subset N$. This point was assumed to exist in the
	definition of admissible sequence.
	
	Let $p\in \Omega$ and take $K$ to be a compact set in $\Omega$ containing $p$.
	Then for $n,m > L$, we can calculate as follows:
	\begin{align*}
		d_{\tilde{g}}(\Phi_n(p), \Phi_m(p))
		& \leq d_{\tilde{g}}(\Phi_n(p), \Phi_n(x)) + d_{\tilde{g}}(\Phi_n(x), \Phi_m(x)) + d_{\tilde{g}}(\Phi_m(x), \Phi_m(p)) \\
		& \leq H_K(1+\beta)\sqrt{1+C}d_g(p,x) + \mathrm{diam}(V)+ H_K(1+\beta)\sqrt{1+C}d_g(x,p) \\
		&= 2H_K(1+\beta)\sqrt{1+C}d_g(p,x) + \mathrm{diam}(V) \\
		& \leq A,
	\end{align*}
	where $A>0$ does not depend on $n$. Hence the set $S(p)$ is contained in the
	closed ball $\bar{B_A(y)}$, which is a closed and bounded subset of a complete
	manifold and hence compact.
	
	Thus we have that the family $\{\Phi_n\}$ is equicontinuous and pointwise
	bounded on each compact $K\subset \Omega$ and hence has a convergent
	subsequence on each such $K$. Take an exhaustion
	of $\Omega$ by compact subsets $K_1\subset K_2\subset\dots$ and use a
	diagonalization argument to build a subsequence $\{\Phi_{n_i}\}$ such that
	$\Phi_{n_i}$ converges uniformly on each $K_j$ and hence the subsequence
	$\{\Phi_{n_i}\}$ converges on any compact subset.

\end{proof}

\begin{remark}
	This theorem could be generalized some by using a generalization of the ring lemma, as in \cite{rodin-sullivan}, rather than LDCR, a generalization of the more precise hexagonal packing lemma. Since we are mainly concerned with convergence to conformal, Theorem \ref{thm:convergenceofmaps} is stated with the LDCR instead, which will be necessary for Theorem \ref{thm:convergenceofpullback}.
\end{remark}

\begin{remark}
	Many of the previous convergence results (e.g., \cite{rodin-sullivan,gu_convergence}) use quasiconformality of piecewise linear maps to prove
	convergence to a conformal map. The result follows essentially from the fact that the triangles in the domain and range
	become the same, often equilateral, due to the hexagonal rigidity or other forms of LDCR. While this is true for our
	discrete conformal maps, the situation in Theorem \ref{thm:convergenceofmaps} is slightly more complicated because
	the domain and range may not be flat and there may be nontrivial topology. One might be able to 
	make these arguments work with a closer look at the barycentric maps and by using Beltrami differentials,
	but these directions have not been pursued here.
\end{remark}

\subsection{Ratio of discrete conformal factors converges}
\label{sec:eF}

In this section we will prove that the ratio of exponentials of discrete conformal factors
converges uniformly on compact subsets of a surface $M$ to a continuous
function. As a first step in that direction, we prove equicontinuity of a
certain family of functions. We begin by defining a few maps and setting some
notation. Recall the Definition \ref{def:exhaustion}.
%
%

Let $M$ be a smooth Riemannian surface and let $\Omega\subset M$ be a
submanifold that admits a generalized triangulated exhaustion
$\{(\Omega_n,T_n,\epsilon_n)\}$. Suppose there are sequences of discrete conformal factors
$\{f_n\}, \{{\tilde{f}_n}\}$ such that $\{T_n, f_n, {\tilde{f}_n}\}$
satisfies LDCR (Condition \ref{def:LDCR}) and, further, that the sequence
$\{s_m\}$ is of order $1/m$. That is, assume there is a constant $\alpha$ such
that $s_m\leq \alpha/m$ for all $m$.

Let $H_n:V(T_n)\to\R$ be the ratio of exponentials of discrete conformal factors in image and domain,
$H_n(v):= e^{{\tilde{f}_n}(v)}/e^{f_n(v)}$.
Define $e^F_{\Delta,n}:\Omega^\Delta_n\to\R$ to be the linear interpolation of $H_n^2$.
That is, $e^F_{\Delta,n}(a) = \sum_i\lambda^iH_n^2(v_i)$, where
$(\lambda^0,\lambda^1,\lambda^2)$ are the barycentric coordinates of $a$.
Finally, let $e^F_n:\Omega_n\to\R$ be the composition $e^F_n:= e^F_{\Delta,n}\circ
\Psi_n^{-1}$.

We prove the following proposition.

\begin{proposition}
	\label{thm:eF-equicontinuous}
	Let $\Omega\subset M$ be an embedded submanifold that admits a generalized
	triangulated exhaustion $\{(\Omega_n,T_n)\}$ and define $e^F_n: \Omega_n\to\R$
	as above. Then under the condition that for each compact subset $K\subset
	\Omega$ there is a positive constant $H_K$ such that $1/H_K\leq H_n(v)\leq H_K$ for
	every vertex $v$ and every $n$, the family $\{e^F_n\}$ is uniformly Lipschitz on compact subsets of $\Omega$.
\end{proposition}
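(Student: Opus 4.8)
The plan is to show that the functions $e^F_n$ have a Lipschitz constant on each compact set that can be taken uniform in $n$, by decomposing the estimate along the composition $e^F_n = e^F_{\Delta,n}\circ \Psi_n^{-1}$ and controlling each piece. Fix a compact $K\subset\Omega$ and, as in the proof of Theorem \ref{thm:convergenceofmaps}, enlarge $K$ slightly to a compact $K'\subset\Omega$ so that for $n$ large every simplex meeting $K$ lies in $K'$; on $K'$ we have the uniform fullness constant $\vartheta=\vartheta(K')$, the uniform bound $1/H_{K'}\le H_n(v)\le H_{K'}$ on vertices, and (after passing far enough along the exhaustion) every vertex in $K'$ is the center of a realized combinatorial closed disk of some fixed generation $m$ independent of $n$, with $s_m$ correspondingly fixed.

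First I would estimate the Lipschitz constant of $e^F_{\Delta,n}$ with respect to the piecewise flat metric $g^\Delta_n$ on each simplex. On a single Euclidean $2$-simplex with vertices $v_0,v_1,v_2$ the function $e^F_{\Delta,n}$ is affine in barycentric coordinates, taking the values $H_n^2(v_i)$ at the vertices, so its Euclidean gradient has magnitude controlled by $\max_{i\ne j}|H_n^2(v_i)-H_n^2(v_j)|$ divided by the minimal height of the simplex. By Lemma \ref{thm:H(v)-close-to-H(w)} adjacent weight ratios satisfy $|H_n^2(v_i)-H_n^2(v_j)|\le 3s_m H_n^2(v_0)\le 3s_m H_{K'}^2$, and by Lemma \ref{thm:fullness-min-distance} the minimal height is at least $\vartheta\epsilon_n$; hence the $g^\Delta_n$-gradient of $e^F_{\Delta,n}$ is bounded by $3s_m H_{K'}^2/(\vartheta\epsilon_n)$ on each simplex. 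The factor $\epsilon_n$ in the denominator is the point where properness enters: since $s_m\le\alpha/m$ and, by Lemma \ref{thm:R-m-epsilon}, $m\ge R/(2\epsilon_n)$ for the disks sitting inside a fixed geodesic ball of radius $R=R(K')$, we get $s_m/\epsilon_n\le 2\alpha/R$, a constant independent of $n$. So $|\nabla_{g^\Delta_n} e^F_{\Delta,n}|$ is bounded by a constant $C=C(K',\vartheta,H_{K'},\alpha,R)$ uniformly in $n$, and since $e^F_{\Delta,n}$ is continuous across simplices this yields that $e^F_{\Delta,n}$ is $C$-Lipschitz with respect to $d_{g^\Delta_n}$ on the relevant piecewise flat region (using Proposition \ref{thm:integral-estimate} from the Appendix to pass from the pointwise gradient bound to a distance estimate across simplices).

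Finally I would transfer this to $\Omega_n$ through $\Psi_n^{-1}$. By Corollary \ref{thm:bary-estimates}, specifically \eqref{eq:Psi^-1-estimate}, the map $\Psi_n^{-1}$ distorts lengths by at most a factor $\sqrt{1+\beta\epsilon_n}\le\sqrt{2}$, so $d_{g^\Delta_n}(\Psi_n^{-1}(p),\Psi_n^{-1}(q))\le\sqrt{2}\,d_g(p,q)$ for $p,q$ in the image; composing with the Lipschitz bound on $e^F_{\Delta,n}$ gives $|e^F_n(p)-e^F_n(q)|\le \sqrt{2}\,C\,d_g(p,q)$, a Lipschitz constant independent of $n$ on $K$. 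I expect the main obstacle to be bookkeeping rather than conceptual: one must be careful that the generation $m$, the fullness constant $\vartheta$, the weight bound $H_{K'}$, and the geodesic-ball radius $R$ can all be chosen uniformly in $n$ on a slightly enlarged compact set, and that the crucial cancellation of the $1/\epsilon_n$ blow-up against $s_m\le\alpha/m$ via Lemma \ref{thm:R-m-epsilon} genuinely applies, i.e. that the closest vertex to each point of $K$ centers a combinatorial disk of generation comparable to $R/\epsilon_n$. This is exactly where the properness hypothesis ($s_m\le\alpha/m$) is indispensable, and it is the step I would write out most carefully.
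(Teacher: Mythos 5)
Your proposal is correct and follows essentially the same route as the paper: a per-simplex gradient bound of order $s_m H_K^2/(\vartheta\epsilon_n)$ (this is the paper's Lemma \ref{thm:deF}, obtained there via the explicit inverse metric in unit-simplex coordinates rather than your minimal-height argument, with fullness entering through $2A\geq\vartheta\epsilon_n^2$), the crucial cancellation $s_m/\epsilon_n\leq 2\alpha/R$ from properness together with Lemma \ref{thm:R-m-epsilon}, summation along a minimizing $g^{\Delta}$-geodesic broken at simplex boundaries, and transfer to $(\Omega_n,g)$ through $\Psi_n^{-1}$ via Corollary \ref{thm:bary-estimates} and Proposition \ref{thm:integral-estimate}. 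The only adjustment needed is to drop your early remark that the generation $m$ may be taken fixed independently of $n$: as you yourself note later, the argument requires using the maximal generation $m\geq R/(2\epsilon_n)$, which grows with $n$, so that $s_m/\epsilon_n$ remains bounded.
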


The condition in the above proposition that $H_n(v)\leq H_K$ for every $v$ and
$n$ is not a very strong condition, following directly from a discrete Schwarz,

Before we prove Proposition \ref{thm:eF-equicontinuous}, we need the following lemma.

\begin{lemma}
	\label{thm:deF}
	For each $n$, let $\Omega^\Delta_n = (T_n, f_n)$ and
	$\tilde{\Omega}^\Delta_n = (T_n,\tilde{f}_n)$ be triangulated
	$(\vartheta,\epsilon_n)$-full piecewise flat surfaces sharing the same
	combinatorics and discrete conformal structure such that the
	sequence $\{(T_n, f_n,{\tilde{f}_n})\}$ satisfies $LDCR$ with $s_m \leq \alpha/m$ for some constant $\alpha$. Let
	$e^F_{\sigma}:\sigma\to\R$ be the restriction of $e^F_{\Delta,n}$ to a single
	(closed) simplex $\sigma = [v_0,v_1,v_2]$.
	
	Assume further that each vertex $v_i$ of $\sigma$ is the center of a realized
	closed combinatorial disk of generation $m$ in $T_n$. Then the following
	inequality holds, where $g^{\Delta}$ is the Euclidean metric on $\sigma$ with the specified edge lengths:
	\begin{equation}
		\label{eq:deF}
		\left| de^F_{\sigma} \right|^2_{g^{\Delta}}
		\leq \left( \frac{6 \alpha H_n^2(v_0)}{m\vartheta\epsilon_n} \right)^2.
	\end{equation}
\end{lemma}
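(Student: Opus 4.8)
The plan is to estimate $|de^F_\sigma|^2_{g^\Delta}$ by writing $e^F_\sigma$ as a linear (affine) function on the simplex $\sigma$ in barycentric coordinates, computing its differential exactly, and then bounding the magnitude of the differential in terms of the spread of the vertex values $H_n^2(v_i)$ and the geometry of $\sigma$. Concretely, on the closed simplex $\sigma=[v_0,v_1,v_2]$ we have $e^F_\sigma = \sum_i \lambda^i H_n^2(v_i)$, so in the unit-simplex parameterization privileging $v_0$ (see Section~\ref{sec:bary-eucl}) this is the affine map $u \mapsto H_n^2(v_0) + \sum_{i=1}^2 u^i\bigl(H_n^2(v_i)-H_n^2(v_0)\bigr)$, whose differential is the constant covector with components $H_n^2(v_i)-H_n^2(v_0)$.

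First I would bound each difference $|H_n^2(v_i)-H_n^2(v_0)|$. Since each $v_i$ is the center of a realized combinatorial closed disk of generation $m$ and the sequence satisfies LDCR with $s_m\leq \alpha/m$, Lemma~\ref{thm:H(v)-close-to-H(w)} gives $|H_n^2(v_i)-H_n^2(v_0)|\leq 3s_m H_n^2(v_0)\leq (3\alpha/m)H_n^2(v_0)$ for the adjacent vertices of $\sigma$. Next I would convert the covector norm from the ``flat'' coordinate norm to the $g^\Delta$-norm. The key geometric input is Lemma~\ref{thm:fullness-min-distance} (together with the two-dimensional case of the fullness hypothesis): for a $(\vartheta,\epsilon_n)$-full triangle, the minimum height $h$ satisfies $h\geq \vartheta\epsilon_n$, and the dual norm of $d\lambda^i$ with respect to $g^\Delta$ is exactly $1/h_i$ where $h_i$ is the height from $v_i$ to the opposite edge; hence $|d\lambda^i|_{g^\Delta}\leq 1/(\vartheta\epsilon_n)$. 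Writing $de^F_\sigma = \sum_i (H_n^2(v_i)-H_n^2(v_0))\, d\lambda^i$ (or using just the two non-$v_0$ terms), the triangle inequality yields
\[
|de^F_\sigma|_{g^\Delta} \leq \sum_{i=1}^{2} |H_n^2(v_i)-H_n^2(v_0)|\,|d\lambda^i|_{g^\Delta} \leq 2\cdot \frac{3\alpha}{m}H_n^2(v_0)\cdot\frac{1}{\vartheta\epsilon_n} = \frac{6\alpha H_n^2(v_0)}{m\vartheta\epsilon_n},
\]
and squaring gives \eqref{eq:deF}.

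The step I expect to be the main obstacle — or at least the one requiring care — is making the relationship between the barycentric-coordinate differentials $d\lambda^i$ and the flat metric $g^\Delta$ fully precise, in particular identifying $|d\lambda^i|_{g^\Delta}$ with the reciprocal of the corresponding triangle height and ensuring the constants line up to give exactly the claimed bound rather than something off by a factor of $2$ or $\sqrt{2}$. One must also be slightly careful that the fullness hypothesis in dimension two, as stated for the full-dimensional simplices, is what feeds Lemma~\ref{thm:fullness-min-distance}; the remark following Definition~\ref{def:fullness} handles the reduction. Everything else is routine: the LDCR estimate is exactly Lemma~\ref{thm:H(v)-close-to-H(w)} with $s_m\leq \alpha/m$ substituted, and the affine structure of $e^F_\sigma$ makes the differential computation elementary.
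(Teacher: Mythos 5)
Your proposal is correct and yields exactly the bound \eqref{eq:deF}, but it reaches it by a slightly different mechanism than the paper. The paper works in the unit-simplex coordinates, writes out the inverse metric $(g^{\Delta})^{-1}$ explicitly (with entries involving $\ell_{0i}$, $\cos\phi_{12}$, and the area $A$), expands $|de^F_{\sigma}|^2_{g^{\Delta}}$ term by term, bounds the cross term by $-2\cos\phi_{12}\leq 2$, and then invokes fullness in the form $2A\geq\vartheta\epsilon_n^2$; you instead use the geometric identity $|d\lambda^i|_{g^{\Delta}}=1/h_i$ for the barycentric differentials together with the height bound of Lemma \ref{thm:fullness-min-distance} and the triangle inequality, which avoids the matrix computation and, pleasantly, lands on the identical constant $6\alpha H_n^2(v_0)/(m\vartheta\epsilon_n)$. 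Both routes use the same two essential inputs (the affine form of $e^F_{\sigma}$ with coefficient bound $|H_n^2(v_i)-H_n^2(v_0)|\leq 3s_mH_n^2(v_0)$ from Lemma \ref{thm:H(v)-close-to-H(w)}, and nondegeneracy from fullness), so the difference is in packaging rather than substance. One small precision point in your version: $1/|d\lambda^i|_{g^{\Delta}}$ is the distance from $v_i$ to the \emph{line} containing the opposite edge, whereas Lemma \ref{thm:fullness-min-distance} is phrased in terms of the distance to the opposite face; in an obtuse triangle these can differ. This is harmless, since fullness bounds the line-distance directly via $2A=\ell_{jk}h_i\geq\vartheta\epsilon_n^2$ and $\ell_{jk}\leq\epsilon_n$ (and indeed the proof of Lemma \ref{thm:fullness-min-distance} actually estimates the height in this sense), but you should state that step in terms of the area rather than leaning on the lemma's wording.
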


\begin{proof}
	We will find it useful to use the unit simplex $D$ as the coordinate rather than the standard simplex $\Delta$ (see Section \ref{sec:bary-eucl}). 
	
	We can rewrite $e^F_{\sigma}$
	to be a function from the unit simplex $D$ by replacing $\lambda^0$ by $1-\lambda^1-\lambda^2$. In these
	coordinates, we have that
	\[ e^F_{\sigma}(\lambda^1,\lambda^2) = H_n^2(v_0) + \sum_{i=1}^2
	\lambda^i(H_n^2(v_i)-H_n^2(v_0)).\]

	A direct calculation shows that the inverse metric $\left( g^{\Delta}
	\right)^{-1}$ in the coordinate of $D$ can be written as
	\[ \left( g^{\Delta} \right)^{-1} = \frac{1}{4A^2}
	\begin{pmatrix}
		\ell_{02}^2 & -\ell_{01}\ell_{02}\cos \phi_{12} \\[.75em]
		-\ell_{01}\ell_{02}\cos \phi_{12} & \ell_{01}^2
	\end{pmatrix},\]
	where $\phi_{12}$ is the angle between $p_1-p_0$ and $p_2-p_0$, and $A$ is the
	area of $\sigma$.

	We can now calculate as follows, keeping in mind that every edge has length
	$\ell_{ij}\leq \epsilon_n$ and using the fact that $-2\cos\phi_{12}\leq 2$:
	\begin{align*}
		\left| d e^F_{\sigma} \right|^2_{g^{\Delta}}
		& = \left( \frac{\partial e^F_{\sigma}}{\partial \lambda^1} \right)^2 \left( g^{\Delta} \right)^{11}
		+ \left( \frac{\partial e^F_{\sigma}}{\partial \lambda^2} \right)^2 \left( g^{\Delta} \right)^{22}
		+2 \left( \frac{\partial e^F_{\sigma}}{\partial \lambda^1} \right)
		\left( \frac{\partial e^F_{\sigma}}{\partial \lambda^2} \right) \left( g^{\Delta} \right)^{12} \\
		&\leq \frac{\epsilon_n^2}{4A^2} \left[
		(H_n^2(v_1)-H_n^2(v_0))^2+(H_n^2(v_2)-H_n^2(v_0))^2 \right.\\
		& \qquad\qquad\qquad
		+\left. 2 \left( H_n^2(v_1)-H_n^2(v_0) \right)\left( H_n^2(v_2)-H_n^2(v_0) \right) \right].
	\end{align*}
	
	By Lemma \ref{thm:H(v)-close-to-H(w)}, we have that for $i=1,2$,
	$\left|H_n^2(v_i)-H_n^2(v_0)\right|\leq 3s_mH_n^2(v_0)$ since each $v_i$ is the center of a
	(realized) closed combinatorial disk of generation $m$. Hence we have that
	\[ \left| d e^F_{\sigma} \right|^2_{g^{\Delta}} \leq \frac{\epsilon_n^2
		9s_m^2H_n^4(v_0)}{A^2}. \]
	
	The simplex $\sigma$ is $(\vartheta,\epsilon_n)$-full and a 2-simplex, so
	\[ 2A\geq \vartheta\epsilon_n^2.\]
	Hence,
	\[ \left| de^F_{\sigma} \right|^2_{g^{\Delta}} \leq \frac{36
		s_m^2H_n^4(v_0)}{\vartheta^2\epsilon_n^2}.\]
	
	Finally, since $s_m\leq \alpha/m$, 
	\[ \left| de^F_{\sigma} \right|^2_{g^{\Delta}}
	\leq \frac{(36)(H_n^4(v_0))(\alpha^2/m^2)}{\vartheta^2\epsilon_n^2}
	= \left( \frac{6 \alpha H_n^2(v_0)}{m\vartheta\epsilon_n} \right)^2, \] as required.
	
\end{proof}

Now we can prove Proposition \ref{thm:eF-equicontinuous}.

\begin{proof}[Proof of Proposition \ref{thm:eF-equicontinuous}]
	Let $K\subset \Omega$ be compact and let $p,q\in K$. Take $n$ large enough so
	that $K\subset \Omega_n$ for every $n$ and define $a_n:= \Psi_n^{-1}(p)$ and
	$b_n:=\Psi_n^{-1}(q)$ to be the points in $\Omega^\Delta_n$ corresponding to points
	$p,q$ in $\Omega$. Then by definition, \[e^F_n(p) = (e^F_{\Delta,n}\circ
	\Psi_n^{-1})(p) = e^F_{\Delta,n}(a_n)\] and similarly, $e^F_n(q) =
	e^F_{\Delta,n}(b_n)$. Hence $\left| e^F_n(p)-e^F_n(q) \right| = \left|
	e^F_{\Delta,n}(a_n)-e^F_{\Delta,n}(b_n) \right|$.
	
	Take a minimizing geodesic $\gamma:[0,1]\to M^{\Delta}$ connecting $a_n$ and
	$b_n$ and let $\{\sigma_i\}$ be an ordered list of simplices intersecting
	$\gamma$. Assume that unnecessary simplices have been removed from this list
	in the manner detailed at the beginning of the proof of Proposition
	\ref{thm:integral-estimate} in the Appendix and just as in that proof, let
	$\{t_i\}_{i=1}^{Q-1}$ be the list of $t$-values where $\gamma$ leaves one
	simplex and enters the next, with $t_0:= 0$ and $t_Q:= 1$. Hence
	$a_n=\gamma(t_0)$ and $b_n=\gamma(t_Q)$.
	
	Note that if a point $x$ is in the intersection of two simplices $\sigma$ and
	$\tau$, then $e^F_{\Delta,n}(x)$ can be calculated in either $\sigma$ or $\tau$
	and the result is the same. That is, if $x\in \sigma\cap\tau$, then
	$e^F_{\Delta,n}(x)=e^F_{\sigma}(x)=e^F_{\tau}(x)$, so we can use the Triangle
	Inequality to perform the following calculation:
	\begin{align}
		\left| e^F_n(p)-e^F_n(q) \right|
		&= \left| e^F_{\Delta,n}(a_n)-e^F_{\Delta,n}(b_n) \right| \nonumber\\
		&= \left| \sum_{i=0}^Q \left( e^F_{\sigma_i}(\gamma(t_i))
		-e^F_{\sigma_i}(\gamma(t_{i+1})) \right) \right| \nonumber\\
		& \leq \sum_{i=0}^Q \left| e^F_{\sigma_i}(\gamma(t_i))-e^F_{\sigma_i}(\gamma(t_{i+1})) \right|.
		\label{eq:eF-middle-step}
	\end{align}
	
	Each term of \eqref{eq:eF-middle-step} can be calculated on a single
	simplex, so we can use the Mean Value Theorem on each term separately and then
	sum at the end. Since $e^F_{\sigma}$ is linear on $\sigma$, the differential
	$de^F_{\sigma}$ is constant, so the Mean Value Theorem says that if $x$ and
	$y$ are points in $\sigma$, then
	\begin{equation}
		\label{eq:eF-MVT}
		\left| e^F_{\sigma}(x)-e^F_{\sigma}(y) \right|_{\R}
		\leq \left| de^F_{\sigma} \right|_{g^{\Delta}} d_{g^{\Delta}}(x,y).
	\end{equation}
	Lemma \ref{thm:deF} gives us an upper bound on
	$|de^F_{\sigma}|_{g^{\Delta}}$ and we now use this upper bound with
	\eqref{eq:eF-middle-step} to get
	\begin{align*}
		\left| e^F_n(p)-e^F_n(q) \right|
		& \leq \sum_{i=0}^Q \left| e^F_{\sigma_i}(\gamma(t_i))-e^F_{\sigma_i}(\gamma(t_{i+1})) \right| \\
		&\leq \sum_{i=0}^Q \left| de^F_{\sigma_i} \right|_{g^{\Delta}}d_{g^{\Delta}}(\gamma(t_i), \gamma(t_{i+1})) \\
		&\leq \alpha \sum_{i=0}^Q
		\frac{H_n^2(v_i)}{m_i\vartheta\epsilon_n}d_{g^{\Delta}}(\gamma(t_i),\gamma(t_{i+1})),
	\end{align*}
	where the vertex $v_i$ is one of the vertices of $\sigma_i$ and $m_i$ is the
	number of generations of the largest realized closed combinatorial disk
	about $v_i$ that is contained in $\Omega$.
	
	Next let $R=R(K)$ be the value guaranteed by Lemma \ref{thm:ball-radius-R}
	such that for each point $p\in K$ there is a geodesic ball of radius $R$ about
	$p$ completely contained in $\Omega$. Now by Lemma \ref{thm:R-m-epsilon}, we have
	that $m_i\geq R/(2\epsilon_n)$ and we are assuming that $H_n(v_i)\leq H_K$ for
	every vertex $v_i$ contained in $K$, so we have the following:
	\begin{align*}
		\left| e^F_n(p)-e^F_n(q) \right|
		&\leq 6\alpha \sum_{i=0}^Q
		\frac{H_n^2(v_i)}{m_i\vartheta\epsilon_n}d_{g^{\Delta}}(\gamma(t_i),\gamma(t_{i+1})) \\
		&\leq \frac{12\alpha H_K^2}{R\vartheta} \sum_{i=0}^Q d_{g^{\Delta}}(\gamma(t_i),\gamma(t_{i+1})) \\
		& =\frac{12\alpha H_K^2}{R\vartheta} d_{g^{\Delta}}(a_n,b_n),
	\end{align*}
	where the last equality follows since $\gamma$ is a minimizing geodesic from
	$a_n$ to $b_n$.
	
	Finally, Proposition \ref{thm:integral-estimate} says that
	$d_{g^{\Delta}}(a_n,b_n)\leq (1+\beta\epsilon)d_g(p,q)$, so
	\[ \left| e^F_n(p)-e^F_n(q) \right|\leq \frac{12H_K^2a}{\vartheta
		R}d_{g^{\Delta}}(a_n,b_n) \leq \frac{12H_K^2a}{\vartheta
		R}(1+\beta\epsilon)d_g(p,q),
	\] 
	and hence every $e^F_n$ is Lipschitz with
	the same constant $L\leq 12\alpha H_K^2(1+\beta)/(\vartheta R)$ and the
	family $\{e^F_n\}$ is uniformly equicontinuous.
\end{proof}

We now have a direct corollary of the previous result.

\begin{corollary}
	\label{thm:eFn-to-eF}
	There exists a subsequence $\{e^F_{n_k}\}\subset \{e^F_n\}$ and a positive continuous
	function $e^F:\Omega\to \R $ such that $e^F_{n_k}\to e^F$ uniformly on compact
	subsets of $\Omega$.
\end{corollary}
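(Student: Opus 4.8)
The plan is to deduce this from Proposition \ref{thm:eF-equicontinuous} by a standard Arzel\`a--Ascoli plus diagonalization argument, so the proof is essentially bookkeeping. First I would establish pointwise boundedness of the family $\{e^F_n\}$ on compact sets: fix a compact $K\subset\Omega$ and the associated constant $H_K$ with $1/H_K\leq H_n(v)\leq H_K$ for every vertex $v$ and every $n$. For a point $a$ lying in a simplex with vertices $v_0,v_1,v_2$, the value $e^F_{\Delta,n}(a)=\sum_i\lambda^iH_n^2(v_i)$ is a convex combination of numbers each lying in $[1/H_K^2,H_K^2]$, hence $1/H_K^2\leq e^F_{\Delta,n}(a)\leq H_K^2$; composing with $\Psi_n^{-1}$ gives the same bound for $e^F_n$ on $K$. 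So $\{e^F_n\}$ is pointwise bounded (indeed uniformly bounded on each compact set), and it is uniformly Lipschitz on compact subsets by Proposition \ref{thm:eF-equicontinuous}, hence equicontinuous there.

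Next I would invoke the Arzel\`a--Ascoli theorem in the form of Proposition \ref{thm:arzela-ascoli-manifolds}: on any single compact $K\subset\Omega$ the family has a subsequence converging uniformly on $K$. To upgrade to convergence on all of $\Omega$, take an exhaustion $K_1\subset K_2\subset\cdots$ of $\Omega$ by compact sets with $\bigcup_j K_j=\Omega$ and run the usual diagonal argument: extract a subsequence converging uniformly on $K_1$, then a further subsequence converging uniformly on $K_2$, and so on, and take the diagonal subsequence $\{e^F_{n_k}\}$. This subsequence converges uniformly on every $K_j$, hence uniformly on every compact subset of $\Omega$, to a limit function $e^F:\Omega\to\R$.

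Finally I would check the two qualitative properties of $e^F$. Continuity is immediate since a uniform limit of continuous functions on each compact set is continuous, and continuity is a local property. Positivity follows from the lower bound established above: for each compact $K$ we have $e^F_{n_k}\geq 1/H_K^2>0$ on $K$, so the limit satisfies $e^F\geq 1/H_K^2>0$ on $K$, and since every point of $\Omega$ lies in some such $K$, $e^F$ is strictly positive everywhere. There is no real obstacle in this argument; the only points requiring any care are making sure the Lipschitz constant in Proposition \ref{thm:eF-equicontinuous} and the bound $H_K$ depend only on the compact set and not on $n$, which is exactly how those statements were set up.
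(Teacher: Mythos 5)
Your proposal is correct and takes essentially the same route as the paper: both deduce the corollary from Proposition \ref{thm:eF-equicontinuous} via the Arzel\`a--Ascoli theorem (Proposition \ref{thm:arzela-ascoli-manifolds}) together with a diagonal argument over a compact exhaustion, with pointwise boundedness coming from the bounds on $H_n$. Your explicit check that the limit is strictly positive, using the lower bound $1/H_K \le H_n(v)$ so that $e^F \ge 1/H_K^2$ on each compact set, is a small point the paper's proof leaves implicit, and it is a worthwhile addition.
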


\begin{proof}
	This proof is an application of the Arzel\'a-Ascoli Theorem.	
	By Proposition \ref{thm:eF-equicontinuous}, the collection $\{e^F_n\}$ is
	equicontinuous on compact subsets, so all that remains is to show that
	$\{e^F_n\}$ is pointwise bounded. That is, that for any $p\in \Omega$,
	$|e^F_n(p)|\leq C$ for some constant $C$ that does not depend on $n$. But this
	follows immediately from the assumption that $H_n(v) =
	e^{{\tilde{f}_n}(v)}/e^{f_n(v)}\leq H_K$ for every vertex $v$ and every $n$.
	
	Hence if $K$ is an arbitrary compact subset of $\Omega$, then there is a
	subsequence of $\{e^F_n\}$ that converges uniformly on $K$.
	
	All that remains is a diagonalization argument to show that there is a
	subsequence $\{e^F_{n_k}\}$ that is convergent on \emph{every} compact subset
	of $\Omega$.
	
	Take a countable exhaustion of $\Omega$ by compact subsets. That is, let
	$\{K_i\}_{i=1}^\infty$ be compact subsets of $\Omega$ such that $K_i\subset
	K_{i+1}$ and for each point $p\in \Omega$, there is a number $Q\in \N$ such that
	$p\in K_i$ for every $i\geq Q$.
	
	Let $\{e^F_{1,n}\}\subset \{e^F_n\}$ be a subsequence that converges uniformly
	on $K_1$. Then take a further subsequence $\{e^F_{2,n}\}\subset \{e^F_{1,n}\}$
	which converges uniformly on $K_2$. Proceed in this manner indefinitely.
	
	Finally, build a new subsequence $\{e^F_i\}$ as follows. Let
	$e^F_1=e^F_{1,1}$, the first element in the first subsequence,
	$e^F_2=e^F_{2,2}$, the second element in the second, and so on. Then since
	$\{e^F_i\}\subset \{e^F_{1,n}\}$, the subsequence $\{e^F_i\}$ converges in
	$K_1$. Similarly $\{e^F_i\}_{i=2}^{\infty}\subset \{e^F_{2,n}\}$ and hence
	$\{e^F_i\}$ converges in $K_2$ as well. By the same argument, $\{e^F_i\}$
	converges uniformly on each of the countably many subsets $\{K_i\}$, and hence
	on any compact subset of $\Omega$.
\end{proof}

\begin{theorem}\label{thm:convergenceofpullback}
		Let $\{(\Omega_n,\tilde{\Omega}_n,T_n,f_n,\tilde{f}_n, \epsilon_n)\}$ be a proper admissible
	sequence for $(\Omega, \tilde{\Omega})$ and let $\{\Phi_n\}$ be the
	corresponding sequence of barycentric discrete conformal maps. Then there exists
	a positive continuous function $e^F$ such that $\Phi_n^{*}\tilde{g}\to e^Fg$ in $L^{\infty}$ on compact
	subsets of $\Omega$, and hence convergence is to a conformal map.
\end{theorem}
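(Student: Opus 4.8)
The plan is to combine the uniform metric comparison of Proposition~\ref{thm:main-prop} with the convergence of conformal factors furnished by Corollary~\ref{thm:eFn-to-eF}. Fix a compact set $K\subset\Omega$. Exactly as in the proof of Proposition~\ref{thm:eF-equicontinuous}, Lemma~\ref{thm:ball-radius-R} yields $R=R(K)>0$ such that a geodesic ball of radius $R$ about every point of $K$ lies in $\Omega$, and then, for $p\in K$ and $n$ large, the largest generation $m=m_n$ of a realized closed combinatorial disk about the nearest vertex $v_n$ that is contained in $\Omega$ satisfies $m_n\ge R/(2\epsilon_n)$ by Lemma~\ref{thm:R-m-epsilon}. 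Here is where properness enters: $s_m\le\alpha/m$ forces $s_{m_n}\le 2\alpha\epsilon_n/R$, which tends to $0$ uniformly in $p\in K$ since $\epsilon_n\to0$. As also $\beta\epsilon_n\to0$, the prefactors $(1\pm\beta\epsilon_n)^2(1\pm Cs_{m_n})$ in \eqref{eq:main-estimate} converge to $1$ uniformly on $K$.

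Now pass to the subsequence $\{n_k\}$ of Corollary~\ref{thm:eFn-to-eF}, so that $e^F_{n_k}\to e^F$ uniformly on $K$ with $e^F$ continuous and strictly positive, hence bounded above and below on $K$. (The two-sided bound on $H_n$ needed in that corollary comes from the uniform upper bound in Definition~\ref{def:admissible-sequence} applied to both $\phi_n$ and $\phi_n^{-1}$, using that LDCR is symmetric in $f_n,\tilde f_n$.) Substituting both limits into Proposition~\ref{thm:main-prop} gives, uniformly over $p\in K$ and $X\in T_pM$,
\[
\left|\,|X|^2_{\Phi_{n_k}^{*}\tilde g}-e^F(p)\,|X|^2_g\,\right|\longrightarrow 0 .
\]
Since $\Phi_{n_k}^{*}\tilde g$ and $e^Fg$ are symmetric bilinear forms, polarization turns this quadratic-form control into $L^\infty$ control of the forms themselves, so $\Phi_{n_k}^{*}\tilde g\to e^Fg$ in $L^\infty(K)$; as $K$ was arbitrary this is the stated convergence on compact subsets. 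Intersecting $\{n_k\}$ with the subsequence of Theorem~\ref{thm:convergenceofmaps}, we may in addition assume $\Phi_{n_k}\to\Phi$ uniformly on compact subsets of $\Omega$.

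It remains to conclude that $\Phi$ is conformal. As in the proof of Theorem~\ref{thm:convergenceofmaps}, combined with \eqref{eq:main-estimate} and the positive lower bound on $e^F$, the $\Phi_{n_k}$ are uniformly bi-Lipschitz on each compact subset, so $\Phi$ is bi-Lipschitz and hence differentiable almost everywhere with bounded, nondegenerate differential; one then checks that $\Phi^{*}\tilde g=e^Fg$ almost everywhere, since at a point of differentiability the $L^\infty$ limit of the approximating pullback metrics must coincide with the quadratic form of $d\Phi$ once $\Phi_{n_k}\to\Phi$ and the $\Phi_{n_k}$ are equi-Lipschitz. A bi-Lipschitz map between surfaces whose pullback metric is pointwise a.e.\ a positive multiple of the domain metric is $1$-quasiconformal, hence conformal and smooth by the standard regularity theory for quasiconformal maps (equivalently, via the conformal structures that $g$ and $\tilde g$ induce on the surfaces together with Weyl's lemma). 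The main obstacle is precisely this last step — identifying the $L^\infty$ limit of the $\Phi_{n_k}^{*}\tilde g$ with the pullback of the (merely a.e.-differentiable) limit map and then upgrading a.e.\ conformality to genuine conformality; everything before it is bookkeeping on how the error terms of Proposition~\ref{thm:main-prop} decay once properness guarantees $m_n\to\infty$.
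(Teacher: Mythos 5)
Your proposal is correct and follows essentially the same route as the paper's proof: squeeze the two-sided estimate \eqref{eq:main-estimate} of Proposition \ref{thm:main-prop} using Lemma \ref{thm:R-m-epsilon} to force $m\to\infty$ (hence $s_m\to 0$; note the paper needs properness not for this step, where monotonicity of $s_m$ already suffices, but only inside Corollary \ref{thm:eFn-to-eF} via the Lipschitz bound of Proposition \ref{thm:eF-equicontinuous}), together with $\epsilon_n\to 0$ and the uniform convergence $e^{F}_n\to e^{F}$ on compacts. Your closing paragraph—identifying the limit of $\Phi_{n_k}^{*}\tilde g$ with the pullback under the a.e.-differentiable limit map and upgrading to genuine conformality by quasiconformal regularity—is extra material beyond the paper, which takes the $L^\infty$ convergence of the pullback metrics itself as the meaning of "convergence to a conformal map" and stops there, so that step (which you rightly flag as the delicate one) is optional rather than required for the stated theorem.
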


\begin{proof}
	We show that $\Phi_n^{*}{\tilde{g}}\to e^Fg$ in $L^{\infty}$ on compact subsets
	of $\Omega$.
	
	Let $K\subset \Omega$ be compact and take $L\in \N$ to be large enough so that
	$K\subset \Omega_n$ for every $n>L$. Note that by discarding elements of
	$\{\Omega_n\}$, we may assume without loss of generality that $\Omega_n\subset
	\Omega_{n+1}$ for every $n$. Take $R>0$ such that $B_R(p)$ is a geodesic ball
	of radius $R$ contained in $\Omega_L$ and hence $B_R(p)$ is also contained in
	$\Omega_n$ for $n >L$. Such an $R$ exists by Lemma \ref{thm:ball-radius-R}
	along with our earlier assumption that the $\Omega_n$'s are nested.
	
	Define the set $E\subset M$ to be $E:= \bigcup_nE(T_n)$, the union over all $n$
	of (closed) edges in the triangulations $T_n$. Note that for each $n$, the set of edges
	$E(T_n)$ is a set of measure zero (in $M$) and since there are countably many
	such subsets, the countable union $E$ is also a set of measure zero.
	
	Let $X$ be a vector field. Then by Proposition \ref{thm:main-prop}, the
	following inequality holds for any point in $\Omega\setminus E$, where each
	vertex $v$ in $T_n$ is the center of a closed combinatorial disk of $m$
	generations
	\begin{equation}
		\label{eq:Phi-n-almost-conformal-middle-step}
		\left( 1-\beta\epsilon_n \right)^2 \left(1 - Cs_m
		\right)e^F_n|X|^2_g \leq |X|^2_{\Phi_n^{*}{\tilde{g}}} \leq (1+\beta\epsilon_n)^2\left(1+
		Cs_m\right) e^F_n|X|^2_g.
	\end{equation}
	
	By assumption $\epsilon_n\to 0$ as $n\to\infty$, so both
	$(1-\beta\epsilon_n)^2$ and $(1+\beta\epsilon_n)^2$ approach $1$ as
	$n\to\infty$. Corollary \ref{thm:eFn-to-eF} says that $e^F_n\to e^F$ uniformly
	as $n\to\infty$. Note that this is the only place we essentially need the properness assumption.
	
	All that remains is to show that as $n\to\infty$, $1-Cs_m$
	and $1+Cs_m$ both approach $1$. 
	By Lemma \ref{thm:R-m-epsilon}, we
	have that \[ \frac{1}{m}\leq \frac{2\epsilon_n}{R}, \] which implies
	$m\to \infty$ as $n\to \infty$ and so $s_m \to 0$.

	
	We have shown that on $K\setminus E$, both the left hand and right hand sides
	of \eqref{eq:Phi-n-almost-conformal-middle-step} converge uniformly to
	$e^F|X|^2_g$, so clearly the middle term, $|X|^2_{\Phi_n^{*}{\tilde{g}}}$ must as well.
	Hence, 
	$\Phi_n^{*}{\tilde{g}}\to e^Fg$ in
	$L^{\infty}$.
\end{proof}

\section{Circle packing convergence of simply connected domains} \label{sec:circlepack}

In this section we use Theorem \ref{thm:main-thm} to prove the Rodin-Sullivan Theorem \cite{rodin-sullivan}. 
Recall the definitions of the circle packing conformal structure from Section \ref{sec:discreteconformal map}.



Let $(\C,g)$ be the complex plane with the usual Euclidean metric $g$ and let
$\Omega\subset \C$ be a simply connected bounded region in the plane. For each
$n>0$ let $T_{n}$ be the circle packing with hexagonal combinatorics and circles of radius $1/n$ intersected with $\Omega$. 
Let $\Omega_n =|T_{n}|$, where $|T|$ denotes the carrier of the triangulation $T$.
We define $\tilde{\Omega}_n$ by taking the combinatorics of $T_n$ and changing the radii so that boundary circles are internally tangent to the unit disk
and the internal circles pack the disk (see \cite{rodin-sullivan,circle_packing} for existence). The circle packing maps are the piecewise linear maps $\phi_n:\Omega_n \to \tilde{\Omega}_n$.

\begin{lemma}
	\label{thm:RS-gen-trian-exh}
	The sequence $\{(\Omega_{n},T_{n})\}$ is a generalized
	triangulated exhaustion of $\Omega$.
\end{lemma}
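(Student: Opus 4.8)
The plan is to check the three conditions of Definition \ref{def:exhaustion} one at a time, exploiting that $(\C,g)$ is flat and that the hexagonal packing makes every triangle of $T_n$ equilateral. First I would fix $\epsilon_n:=2/n$: since each edge of $T_n$ joins the centers of two tangent circles of radius $1/n$, every edge has length exactly $2/n$, so this is the (constant) maximum edge length and it decreases to $0$, and each $\Omega_n=|T_n|$ is a finite union of equilateral triangles of side $2/n$, hence a compact region carrying a geodesic (straight-segment) triangulation. The key preliminary observation is that, the ambient metric being flat and the geodesic edges being ordinary segments, the piecewise flat manifold $\Omega_n^\Delta$ glued from Euclidean triangles with these edge lengths is canonically isometric to $\Omega_n\subset\C$ itself, and — since the Riemannian center of mass in $\C$ is just the ordinary weighted average — the Riemannian barycentric map $\Psi_n\colon\Omega_n^\Delta\to\Omega_n$ is precisely this isometry (well defined, and a diffeomorphism on each closed simplex, by Proposition \ref{thm:bary-prop13} and Lemma \ref{thm:Psi-diffeo-closed-simplices}, as $\C$ has infinite convexity radius and each simplex has convex image). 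Thus the hypotheses of Definition \ref{def:exhaustion}, phrased via $\Omega_n^\Delta$, reduce to plain statements about $\Omega_n\subset\C$.

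With that in hand, condition (3) is essentially free and uniform in $K$: an equilateral triangle $\sigma$ of side $\epsilon_n$ has $2\,\Vol_g(\sigma)=\tfrac{\sqrt3}{2}\epsilon_n^2$, so every simplex of $\Omega_n^\Delta$ is $(\vartheta,\epsilon_n)$-full with $\vartheta=\tfrac{\sqrt3}{2}$ for every $n$. For condition (2) I would invoke the standard property of the circle-packing construction: each triangle of $T_n$ has its vertices at centers of circles (with their closed flowers) contained in $\Omega$, and since $\Omega$ is simply connected its complement has no bounded component, so the curvilinear interstices enclosed by such circles — hence the flat triangles they span — also lie in $\Omega$; thus $\Omega_n\subset\Omega\subset U$ for \emph{every} open $U\supset\Omega$ (see \cite{rodin-sullivan,circle_packing}). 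For condition (1), set $\delta:=\operatorname{dist}(K,\C\setminus\Omega)>0$, positive by compactness of $K$; any $p\in K$ lies in some triangle $\tau$ of the infinite hexagonal packing, and all the circles and flowers required for $\tau\in T_n$ sit inside a ball $B(p,5/n)\subset B(p,\delta)\subset\Omega$ once $5/n<\delta$, so $p\in\Omega_n$ and hence $K\subset\Omega_n$ for all $n>N(K):=\lceil 5/\delta\rceil$.

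Finally I would assemble: given $K$ and $U$, take $N=N(K)$ from condition (1) and $\vartheta=\tfrac{\sqrt3}{2}$; then (1)--(3) hold for all $n>N$, which is exactly the definition of a generalized triangulated exhaustion of $\Omega$. I expect no genuine obstacle here — fullness is forced by the equilateral combinatorics and the flat setting trivializes the barycentric machinery — so the only care needed is in conditions (1)--(2): pinning down which circles and flowers the construction retains, together with the small topological argument (using simple connectivity of $\Omega$) that the carrier does not escape $\Omega$. Any of the standard circle-packing conventions works the same way.
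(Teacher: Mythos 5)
Your proof is correct and follows essentially the same route as the paper: you verify the three conditions of Definition \ref{def:exhaustion} directly, obtaining fullness with $\vartheta=\sqrt{3}/2$ from the equilateral triangles of side $2/n$, containment $\Omega_n\subset\Omega\subset U$ from the construction, and $K\subset\Omega_n$ for large $n$ since the circle radii shrink to zero. The additional details you supply (the explicit $\epsilon_n=2/n$, the $5/n$ neighborhood bound, the interstice argument via simple connectivity, and the remark that the barycentric map is an isometry in the flat setting) are consistent elaborations of what the paper leaves implicit.
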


\begin{proof}
%
	Since the radii of circles in $\Omega_{n}$ goes to 0 as $n\to \infty$
	does, every compact subset $K\subset \Omega$ is eventually contained in
	$\Omega_{n}$ for all $\epsilon$ small enough. Further, each
	$\Omega_{n}$ lies within $\Omega$, and hence necessarily within any open
	set $U\supset \Omega$. This shows the first two conditions of Definition
	\ref{def:exhaustion}
	
	Finally we note that since every triangle in $T_{n}$ is equilateral
	with edge length $2/n$, each triangle is $(\sqrt{3}/2, 2/n)$-full
	and hence the third condition is satisfied as well and $\{(\Omega_{n},
	T_{n})\}$ is a generalized triangulated exhaustion of $\Omega$.
\end{proof}

%


Before we can prove convergence of the ranges in Lemma \ref{thm:RS-image-gen-trian-exh}, we first must prove
the following proposition.

\begin{proposition}
	\label{thm:image-fullness-prelim-prop}
	Given a compact set $K\subset \Omega$, let $\tilde{R}_n$ be the largest radius of
	any circle in the image packing that corresponds to a vertex in $T_n$ adjacent to a
	vertex in $K$. There exists a constants $C$ and $N$ depending only
	on $K$ such that for every vertex $v$ that is adjacent to a vertex in $K$, if $n\geq N$ the
	radius $\tilde{r}_{n}(v)$ in the image of the circle packing map satisfies:
	\[ \tilde{r}_{n}(v) \geq C\tilde{R}_n.\]
\end{proposition}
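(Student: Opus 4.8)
The plan is to reduce the statement to a uniform comparability of image radii over a fixed compact subset of $\Omega$ that contains $K$ — this comparability is exactly what forces the image triangles near $K$ to be $(\vartheta,\epsilon_n)$-full, so it is the right thing to prove for Lemma \ref{thm:RS-image-gen-trian-exh} — and to obtain it by telescoping the sharp hexagonal packing lemma along a combinatorial path of controlled length. For the circle packing structure the domain radii all equal $1/n$, so the left side of \eqref{eq:LDCR} is just $\bigl|\tilde{r}_n(w)/\tilde{r}_n(v)-1\bigr|$, and the hexagonal packing lemma in the sharp form used in \cite{He-hex-packing,he-rodin-bdd-valence,doyle1994asymptotic} supplies a rate $s_m\le\alpha/m$: if $v,w$ are adjacent vertices of $T_n$ and $v$ is the center of a realized closed combinatorial disk of generation $m$, then $\bigl|\tilde{r}_n(w)/\tilde{r}_n(v)-1\bigr|\le\alpha/m$.

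First I would fix a compact connected $L\subset\Omega$ with $K\subset\operatorname{int}(L)$ and set $\rho:=\operatorname{dist}(L,\partial\Omega)>0$. Since the circles of $T_n$ have radius $1/n$ and the combinatorics is hexagonal, consecutive generations around any fixed vertex advance the geometric distance by at most $2/n$; hence for $n$ large every vertex $v$ of $T_n$ in $L$ is the center of a realized closed combinatorial disk of generation at least $c_0 n$ whose carrier lies in $B_\rho(v)\subset\Omega$, where $c_0=c_0(K)>0$. By the same density of the $1/n$-lattice, there is a constant $D=D(K)$ so that for $n$ large any two vertices of $T_n$ in $N_{3/n}(K)$ are joined by a path in the $1$-skeleton of $T_n$ that stays in $L$ and has at most $Dn$ edges. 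These two bookkeeping facts are in the spirit of Lemmas \ref{thm:geom-diam-D} and \ref{thm:R-m-epsilon}.

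Now let $v$ be any vertex adjacent to a vertex of $K$, and let $u$ be such a vertex with $\tilde{r}_n(u)=\tilde{R}_n$; for $n$ large both lie in $N_{3/n}(K)$. Pick a path $v=x_0,x_1,\dots,x_P=u$ in $T_n\cap L$ with $P\le Dn$; each $x_i$ is the center of a disk of generation at least $c_0 n$, so telescoping and the sharp packing lemma give
\[ \frac{\tilde{R}_n}{\tilde{r}_n(v)}=\prod_{i=0}^{P-1}\frac{\tilde{r}_n(x_{i+1})}{\tilde{r}_n(x_i)}\le\Bigl(1+\frac{\alpha}{c_0 n}\Bigr)^{Dn}\le e^{\alpha D/c_0}. \]
Setting $C:=e^{-\alpha D/c_0}$ and taking $N$ large enough for all of the above gives $\tilde{r}_n(v)\ge C\tilde{R}_n$ for $n\ge N$, with $C$ depending only on $K$.

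The step where the argument is genuinely quantitative is the use of the rate $s_m=O(1/m)$ rather than merely $s_m\to0$: the connecting path has length of order $n$, so a per-edge multiplicative error of order $1/n$ is precisely what keeps the product $(1+s_{c_0n})^{P}$ bounded, whereas plain LDCR would not suffice. This $O(1/m)$ estimate is the strengthening of the Rodin--Sullivan hexagonal packing lemma from \cite{He-hex-packing,he-rodin-bdd-valence,doyle1994asymptotic}, the same estimate that makes the circle packing sequence a \emph{proper} admissible sequence. An alternative that avoids it is to use Rodin--Sullivan's length--area lemma directly to bound the rescaled radii $n\tilde{r}_n(\cdot)$ above and below on $L$, after which $\tilde{r}_n(v)\ge C\tilde{R}_n$ is immediate.
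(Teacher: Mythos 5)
Your proposal is correct and is essentially the paper's own argument: the paper likewise notes that every vertex near $K$ is the center of a realized hexagonal disk of generation comparable to $n$ (via $d(K,\partial\Omega)$), connects the given vertex to the maximal-radius vertex by a combinatorial path of at most roughly $2Dn$ edges, and telescopes the sharp packing-lemma bound $s_m\leq B/m$ to get a product of the form $(1-s_{cn})^{2Dn}$ bounded below by a constant $\tfrac12 e^{-8BD/d}$ depending only on $K$. Your bookkeeping with the intermediate compact set $L$ and the upper bound on $\tilde R_n/\tilde r_n(v)$ is just a mirror-image formulation of the same estimate, and your closing observation that the $O(1/m)$ rate (not mere LDCR) is the essential quantitative input matches the paper's use of Theorem \ref{thm:hexagonalpackinglemma}.
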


\begin{proof}
	Our argument is inspired by the proof of Proposition 1 in \cite{rodinschwarz2}.

	Let $d$ denote the distance from $K$ to the boundary of $\Omega$, $d =
d(K,\partial \Omega)$. For any $n>1/(2d)$ there is a $\gamma_n>0$ such that $d/2 < 2\gamma_n/n < d$, so for every vertex $v$ in $K$ there is a hexagon of $\gamma_n$ generations centered at $v$. Call this hexagon $H_{m,\gamma_n}(v)$. By the Hexagonal Packing Lemma, since it is the center of $H_{n,\gamma_n}(v)$, if $v$ and $w$ are neighboring vertices, we must have 
\[
\tilde{r}_n(v) \geq (1-s_{\gamma_n})\tilde{r}_n(w)\geq (1-s_{nd/4})\tilde{r}_n(w).
\] 

If $D$ is the diameter of $K$, we also have that for any two vertices $v$ and $w$ in $K$, there is a path connecting centers of neighboring circles of length less than $2D$ with fewer than $2Dn$ circles. Let $w$ be the vertex with the maximal radius, i.e., $\tilde{r}_n(w)=\tilde{R}_n$. Since by Theorem \ref{thm:hexagonalpackinglemma} there exists $B$ such that $s_m \leq B/m$, it then follows that
\begin{align*}
	\tilde{r}_n(v) &\geq (1-s_{nd/4})^{2Dn}\tilde{R}_n \\
	&\geq \left(1-\frac{B}{nd/4}\right)^{2Dn}\tilde{R}_n\\
	&\geq \frac{1}{2} e^{-8BD/d}\tilde{R}_n,
\end{align*}
if $N$ is large enough. Letting $C:= \frac{1}{2}e^{-8BD/d}$ gives the result.
\end{proof}

\begin{lemma}
	\label{thm:RS-image-gen-trian-exh}
	There exists a sequence $\epsilon_n \to 0$ such that the sequence $\{(\tilde{\Omega}_n, T_{n}, \epsilon_n)\}$ is a generalized
	triangulated exhaustion of $\D$.
\end{lemma}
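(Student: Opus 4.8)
The plan is to check, term by term, the three requirements in Definition~\ref{def:exhaustion} (with $\D$ playing the role of $\Omega$ and $\C$ the role of $M$), together with the existence of the barycentric maps. The candidate sequence is $\epsilon_n := $ the longest edge occurring in the image triangulation $T_n$, equivalently twice the largest radius of any circle in the image packing; the work will then be to show $\epsilon_n\to 0$ and that it interacts correctly with the fullness condition. Two of the requirements are immediate: since $\tilde\Omega_n$ is the carrier of a sub‑packing of the unit disk, $\tilde\Omega_n\subset\D\subset U$ for any open $U\supseteq\D$, which is condition~(2); and because the boundary circles are internally tangent to $\partial\D$, as the combinatorics refines they have radii tending to $0$, so the carriers $|T_n|$ fill $\D$ and every compact $K\subset\D$ lies in $\tilde\Omega_n$ for $n$ large, which is condition~(1). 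I note also that in $\C$ geodesics are straight, so each image simplex is already a flat Euclidean triangle: the piecewise flat model $\tilde\Omega_n^\Delta$ coincides with $\tilde\Omega_n$ and the barycentric map $\tilde\Psi_n$ is the identity, so that prerequisite is automatic once the triangles are nondegenerate.

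The substance is condition~(3), and this is where Proposition~\ref{thm:image-fullness-prelim-prop} is the key input. Given a compact $K$, that proposition says the radii of the image circles at vertices adjacent to $K$ are mutually comparable: $\tilde r_i\geq C(K)\,\tilde R_n$, where $\tilde R_n$ is the largest such radius. For a triangle $\sigma=[v_0,v_1,v_2]$ of three mutually externally tangent circles one has the Heron‑type identity
\[ \Area(\sigma)=\sqrt{\tilde r_0\tilde r_1\tilde r_2\,(\tilde r_0+\tilde r_1+\tilde r_2)}, \]
so comparability gives $\Area(\sigma)\geq\sqrt3\,C(K)^2(\max_i\tilde r_i)^2$ while the longest edge $\tilde r_i+\tilde r_j\leq 2\max_i\tilde r_i$. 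Thus for a simplex of $\tilde\Omega_n$ with a vertex in $K$ all edges are $\leq\epsilon_n$ and $2\Area(\sigma)\geq\vartheta(K)\epsilon_n^2$ for a constant $\vartheta(K)>0$ built from $C(K)$; in particular these simplices stay uniformly nondegenerate, which also supplies the standing hypothesis that the barycentric maps exist.

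The hard part will be the choice of the single sequence $\epsilon_n$ and the verification that $\epsilon_n\to 0$. We need $\epsilon_n$ simultaneously to dominate the edge lengths near \emph{every} compact $K$ (forcing it large), to remain comparable to those edge lengths so that the lower bound $2\Area\geq\vartheta(K)\epsilon_n^2$ survives with a positive constant (forcing it small), and to tend to $0$. The way I would reconcile these is to use Proposition~\ref{thm:image-fullness-prelim-prop} a second time, in the form that for nested compacts $K\subset K'$ the maximal image radii near $K$ and near $K'$ are comparable with a constant depending on $K'$; fixing a reference compact set (or passing to a slowly growing exhaustion of $\D$) then lets one bootstrap the uniform comparability needed to thread $\epsilon_n$ between the two requirements, with $\vartheta(K)$ chosen last. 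The decay $\epsilon_n\to 0$ near a fixed compact set is easy — there are infinitely many disjoint image circles of comparable size inside $\D$, forcing their common scale to $0$ — but promoting this to control of the \emph{global} maximal edge length is the most delicate point, and is where I expect to have to lean on the hexagonal combinatorics and the hexagonal‑packing/ring estimates (Theorem~\ref{thm:hexagonalpackinglemma}) rather than on Proposition~\ref{thm:image-fullness-prelim-prop} alone.
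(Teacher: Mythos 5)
Your outline for conditions (2) and (3) matches the paper: condition (2) is trivial since $\tilde{\Omega}_n\subset\D$, and the fullness condition is obtained exactly as in the paper from Proposition \ref{thm:image-fullness-prelim-prop} plus the Heron identity $\Area(\sigma)=\sqrt{\tilde r_0\tilde r_1\tilde r_2(\tilde r_0+\tilde r_1+\tilde r_2)}$, giving $\vartheta=\vartheta(K)$ built from $C(K)$ and $\epsilon_n$ comparable to twice the largest image radius. But there is a genuine gap at the two places where your argument needs \emph{absolute} smallness of image radii, and it is the same missing ingredient both times: the Length-Area Lemma (Lemma \ref{thm:length-area}). You declare condition (1) ``immediate'' because boundary circles are internally tangent to $\partial\D$ and ``as the combinatorics refines they have radii tending to $0$''; that is not immediate. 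Internal tangency to the unit circle puts no upper bound on a boundary circle's radius by itself, and the decay of boundary radii is precisely the nontrivial step of Rodin--Sullivan, proved by surrounding each boundary circle with many disjoint combinatorial chains in the hexagonal lattice and using the divergence of the harmonic series via Lemma \ref{thm:length-area}. The paper cites exactly this; you give no justification.

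The second place is the one you yourself flag as ``the most delicate point'': showing the maximal image radius (hence $\epsilon_n$) tends to $0$. Your proposed tools do not close this. Theorem \ref{thm:hexagonalpackinglemma} and Proposition \ref{thm:image-fullness-prelim-prop} only give \emph{ratios} of radii of combinatorially nearby circles, with constants that degenerate as the relevant vertices approach $\partial\Omega$ (the constant in Proposition \ref{thm:image-fullness-prelim-prop} depends on $d(K,\partial\Omega)$), so chaining comparability out to where the maximum might be attained does not work; and your area argument (``infinitely many disjoint comparable circles in $\D$'') only controls circles near a fixed compact set, not boundary circles or a maximum drifting toward $\partial\Omega$. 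The paper resolves both issues at once: the Length-Area argument gives uniform decay of the (boundary and hence maximal) radii, so $\tilde R_n\to 0$, one sets $\epsilon_n=2\tilde R_n$, and the local fullness constant then comes from Proposition \ref{thm:image-fullness-prelim-prop}. Your instinct that threading a single sequence $\epsilon_n$ past all compact sets deserves care is fair (the paper is itself brief on this point), but without invoking the Length-Area Lemma your proof of the lemma does not go through.
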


\begin{proof}
	First, note that the second condition in Definition
	\ref{def:exhaustion} is trivially satisfied since the carrier
	$\tilde{\Omega}_n$ is contained in $\D$ for every $n$.
	
	To show the first condition holds we use the Length-Area Lemma, Lemma
	\ref{thm:length-area} of the Appendix, to show
	that the radii of boundary circles converge uniformly to 0, at which point it
	is clear that any compact $K\subset \D$ is contained in $\tilde{\Omega}_n$ for
	all $n$ sufficiently large. This argument uses the structure of the hexagonal lattice and the divergence of the harmonic series (see \cite{rodin-sullivan} or \cite[p. 252]{circle_packing} for details). It follows that the maximum radius $\tilde{R}_n$ goes to zero as $n\to \infty$.
	
	Next fix a compact subset $V\subset \D$ and take $N$ large enough so
	that $V\subset \tilde{\Omega}_n$ for every $n>N$. We want to show that
	every triangle in $\tilde{\Omega}_n$ that has at least one vertex in $V$
	satisfies the fullness condition.
	
	Note that since the inverse circle packing maps $\phi_{n}^{-1}$ are
	continuous, the pre-image $\phi_{n}^{-1}(V)$ is compact. Define $K:=
	\phi_{n}^{-1}(V)$. By Proposition \ref{thm:image-fullness-prelim-prop}, we
	know that if $\sigma = [v_0,v_1,v_2]\subset \tilde{\Omega}_n$ is a triangle with one of its vertices
	contained in $V$ then the radii $\tilde{r}_{n}(v_i)$ corresponding to the
	vertices of $\sigma$ satisfy $\tilde{r}_{n}(v_i) \geq C \tilde{R}_n$.
	
	At this point we can use Heron's formula to estimate the area $A$ of $\sigma$ as follows:
	\begin{align*}
		A &= \sqrt{\tilde{r}_{n}(v_0)\tilde{r}_{n}(v_1)\tilde{r}_{n}(v_2)
			\left( \tilde{r}_{n}(v_0)+\tilde{r}_{n}(v_1) + \tilde{r}_{n}(v_2)\right)} \\
		& \geq \sqrt{3}C^2\tilde{R}_{n}^2.
	\end{align*}
	Since $\tilde{R}_{n}$ is the largest radius of any vertex adjacent to a vertex
	in $V$, we have shown that each triangle $\sigma$ that has at least one vertex
	in $V$ is $(\vartheta, \epsilon_n)$-full (recall Definition
	\ref{def:fullness}) where $\vartheta =\sqrt{3}C^2/2$ and $\epsilon_n = 2\tilde{R}_{n}$, which goes to zero as $n\to \infty$. Hence the sequence
	$\{(\tilde{\Omega}_n, T_{n}, \epsilon_n)\}$ is a generalized triangulated exhaustion
	of $\D$.
\end{proof}

Now that we have shown that there is a generalized triangulated exhaustion of
$\D$, we next show that there is a proper admissible sequence for $(\Omega,\D)$.

\begin{lemma}
	\label{thm:RS-admiss-seq}
	There is a proper admissible sequence $\{(\Omega_n, \tilde{\Omega}_n,
	T_{n}, f_n, \tilde{f}_n)\}$ for $(\Omega, \D)$, where $f_n = \log \frac{1}{n}$ and $\tilde{f}_n = \log \tilde{r}_n$ gotten by the Circle Packing Theorem, Theorem \ref{thm:hyp-circle-unif}.
\end{lemma}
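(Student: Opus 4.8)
The plan is to verify, for the tuple $\{(\Omega_n,\tilde\Omega_n,T_n,f_n,\tilde f_n,\epsilon_n)\}$, each hypothesis of Definition \ref{def:admissible-sequence} together with the rate condition $s_m\le\alpha/m$ that upgrades admissibility to properness; nearly all of the work has already been isolated in the preceding lemmas. First I would record the setup. By Lemmas \ref{thm:RS-gen-trian-exh} and \ref{thm:RS-image-gen-trian-exh}, $\{(\Omega_n,T_n)\}$ and $\{(\tilde\Omega_n,T_n,\epsilon_n)\}$ are generalized triangulated exhaustions of $\Omega$ and $\D$ sharing the (combinatorially identical) hexagonal triangulations $T_n$; passing if necessary to a common sequence $\epsilon_n\to0$ majorizing both meshes rescales the fullness constants only by bounded factors. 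The constant functions $f_n\equiv\log(1/n)$ and $\tilde f_n(v)=\log\tilde r_n(v)$ are discrete conformal factors for the circle packing structure $\mathcal C_{1,1}$: the lengths they induce via Definition \ref{def:disc_conformal} are $\ell_{ij}=r_i+r_j$, exactly the edge lengths of the flat complexes $\Omega^\Delta_n$ and $\tilde\Omega^\Delta_n$, so $(\Omega_n,T_n,\ell(f_n))$ and $(\tilde\Omega_n,T_n,\ell(\tilde f_n))$ are discrete conformal with discrete conformal map the circle packing map $\phi_n$. It remains to check conditions (1)--(3) and the rate.

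For condition (1) and properness, the key observation is that every realized combinatorial closed disk of generation $m$ in $T_n$ is a hexagonal combinatorial disk, so the Hexagonal Packing Lemma applies verbatim. Theorem \ref{thm:hexagonalpackinglemma} supplies a sequence $s_m\downarrow0$ with $s_m\le B/m$ such that whenever $v$ is the center of such a disk and $w$ is adjacent to $v$ one has $|\tilde r_n(v)/\tilde r_n(w)-1|\le s_m$, for every $n$ for which the disk lives in $T_n$ (so one may take $\mathcal N_m=1$). Since $f_n$ is constant, $e^{f_n(v)}=e^{f_n(w)}$ and the left-hand side of \eqref{eq:LDCR} reduces to $|\tilde r_n(w)/\tilde r_n(v)-1|$, which is $\le s_m$ after the harmless replacement of $s_m$ by $2s_m$; hence $\{T_n,f_n,\tilde f_n\}$ satisfies LDCR. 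Because $s_m\le B/m$, the choice $\alpha=B$ shows the sequence is proper once it is admissible.

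Condition (2) asks for a uniform upper bound $e^{\tilde f_n(v)}/e^{f_n(v)}=n\tilde r_n(v)\le H(K)$ on compact $K\subset\Omega$; this is exactly the ``discrete Schwarz'' estimate mentioned after Proposition \ref{thm:eF-equicontinuous}, and I would deduce it by a pigeonhole argument together with Proposition \ref{thm:image-fullness-prelim-prop}. Enlarge $K$ slightly to a compact $K'$ with $K$ in its interior. For $n$ large, $T_n$ has at least $c\,n^2\Vol(K)$ vertices lying in $K'$ and adjacent to a vertex of $K$; their image circles are pairwise disjoint and lie in $\D$, whose area is $\pi$, so at least one of these radii is $\le C_1/n$. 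Proposition \ref{thm:image-fullness-prelim-prop} provides $C=C(K)$ with $\tilde r_n(v)\ge C\tilde R_n$ for every vertex $v$ adjacent to $K$, where $\tilde R_n$ is the largest such radius; applied to the small circle just found this gives $\tilde R_n\le C_1/(Cn)$, hence $n\tilde r_n(v)\le n\tilde R_n\le C_1/C=:H(K)$ for every vertex $v$ adjacent to $K$, and in particular (having enlarged $K$ beforehand) for every vertex of $K$. This is condition (2).

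For condition (3) I would use the normalization freedom in the Circle Packing Theorem (Theorem \ref{thm:hyp-circle-unif}): choose each $\phi_n$ so that the circle of the vertex $v_n$ nearest a fixed basepoint $z_0\in\Omega$ is centered at the origin, hence $\phi_n(v_n)=0$, and (since the barycentric maps fix vertices) $\Phi_n(v_n)=0$. By the equicontinuity estimate of Proposition \ref{thm:main-prop} -- which uses only conditions (1) and (2), already in hand -- the $\Phi_n$ are Lipschitz near $z_0$ with constant controlled by $H$, and $d_g(z_0,v_n)=O(1/n)$, so $\Phi_n(z_0)\to0$; thus $\{\Phi_n(z_0)\}$ lies in a compact subset of $\D$ and condition (3) holds with $x=z_0$. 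All three conditions and the rate $s_m\le B/m$ being verified, $\{(\Omega_n,\tilde\Omega_n,T_n,f_n,\tilde f_n,\epsilon_n)\}$ is a proper admissible sequence for $(\Omega,\D)$. The only substantive steps are the two that feed on the classical deep lemmas -- LDCR at the sharp rate $s_m=O(1/m)$ (the Hexagonal Packing Lemma) and the upper bound in condition (2) (Proposition \ref{thm:image-fullness-prelim-prop}, itself built on that lemma and the Length--Area Lemma) -- and I expect marshalling these inputs, rather than any genuinely new estimate, to be the crux.
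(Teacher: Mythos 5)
Your proposal is correct in substance and verifies the same checklist as the paper (exhaustions from Lemmas \ref{thm:RS-gen-trian-exh} and \ref{thm:RS-image-gen-trian-exh}, LDCR with rate $s_m=O(1/m)$ from the Hexagonal Packing Lemma, Theorem \ref{thm:hexagonalpackinglemma}), but you reach conditions (2) and (3) of Definition \ref{def:admissible-sequence} by different routes. For condition (2) the paper simply cites Corollary \ref{thm:circles-HK-bound}, i.e.\ Rodin's circle packing Schwarz Lemma (Theorem \ref{thm:schwarz-lemma-circles}), whereas you give an area/pigeonhole argument: roughly $c\,n^2$ disjoint image circles near $K$ fit inside $\D$, so one has radius $O(1/n)$, and Proposition \ref{thm:image-fullness-prelim-prop} (Harnack-type comparability of image radii near $K$) then forces $\tilde R_n=O(1/n)$, hence $n\tilde r_n(v)\le H(K)$. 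This is a legitimate and arguably more self-contained derivation, since it leans only on machinery the paper already proves (Proposition \ref{thm:image-fullness-prelim-prop}, which itself rests on the Hexagonal Packing Lemma), at the cost of a counting estimate the paper never needs; do state the enlargement consistently (the vertex count $\gtrsim n^2$ requires the compact set with nonempty interior, so run the whole argument for $K'$ and restrict to $K\subset K'$ at the end — as written you mix $K$ and $K'$, and for a zero-area $K$ the count $c\,n^2\Vol(K)$ is vacuous). For condition (3) the paper uses the trivial observation that $\phi_n(z_0)$, hence $\Phi_n(z_0)$, lies in $\bar{\D}$, which is compact in $\C$; your normalization-plus-Lipschitz argument via Proposition \ref{thm:main-prop} also works but is heavier and formally a bit circular, since that proposition is stated for admissible sequences while condition (3) is part of admissibility — the trivial containment in $\bar{\D}$ sidesteps this entirely. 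Conditions (1) and properness are handled exactly as in the paper.
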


\begin{proof}
	Firstly, $\Omega$ and $\D$ are diffeomorphic embedded submanifolds of the
	complex plane $\C$, which is a Riemannian surface. We have already shown that
	$\{(\Omega_n, T_{n})\}$ and $\{(\tilde{\Omega}_n, T_{n})\}$
	are generalized triangulated exhaustions of $\Omega$ and $\D$ respectively,
	with the same underlying (combinatorial) triangulation $T_{n}$.
	
	Note that $\{(T_{n},f_n, \tilde{f}_n)\}$
	satisfies the LDCR condition by the Hexagonal Packing Lemma, stated in more generality as Theorem \ref{thm:hexagonalpackinglemma} in the Appendix.
%
	The second condition in Definition \ref{def:admissible-sequence} holds in this
	context by Corollary \ref{thm:circles-HK-bound} in the Appendix, following from a discrete analogue of the Schwarz Lemma, Theorem \ref{thm:schwarz-lemma-circles}.
	
	The third condition follows from the normalization assumption on
	the circle packing maps $\phi_n$. Explicitly, the distinguished point
	$z_0$ is mapped under $\phi_n$ to a point within the flower of the image
	circle $c_0'$, which is centered at the origin. Using the Hexagonal Packing
	Lemma,
	we could find a (closed) ball
	$\bar{B}_{R_{\epsilon}}(0)$ about the origin whose radius $R_{\epsilon}$
	decreases with $\epsilon$ such that $\phi_n(z_0)$ lies within this
	closed ball. However, it is simpler to note that $\phi_n(z_0)$ will
	always lie within the unit disk $\D$ and the closure $\bar{\D}$ is compact in
	$\C$. Hence the image set $\{\phi_n(z_0)\}$ is contained in a compact
	subset of $\C$, as required.
	
	At this point we have shown that the sequence $\{(\Omega_n, \tilde{\Omega}_n,
	T_{n}, f_n, \tilde{f}_n)\}$ is an admissible sequence for
	$(\Omega, \D)$. In addition, we want this admissible sequence to be
	\emph{proper}. That is, we want the LDCR constants $s_m$ to be such that
	$s_m\leq A/m$ for some positive constant $A$ independent of $m$. This is part of Theorem \ref{thm:hexagonalpackinglemma} in the Appendix.
\end{proof}

Since we have shown that circle packing maps on $\Omega$ induce a proper
admissible sequence for $(\Omega,\D)$, the convergence of circle packings maps follows from Theorem \ref{thm:main-thm}. For brevity we do not provide conditions for convergence to a unique map, but prefer to only show subsequential convergence to a Riemann mapping.

\begin{theorem}
	Let $\Omega\subset \C$ be a simply connected bounded region in the plane and
	let $\{\phi_n\}$ be the sequence of circle packing maps into the unit
	disk $\D$ taken from subsets of the hexagonal packing in $\Omega$ with radius $1/n$. Then $\{\phi_n\}$ has a subsequence that converges uniformly
	on compact subsets of $\Omega$ to a Riemann mapping $\phi:\Omega\to \D$.
\end{theorem}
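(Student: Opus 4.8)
The plan is to obtain this theorem as a corollary of Theorem~\ref{thm:main-thm} together with Lemma~\ref{thm:RS-admiss-seq}, after two reductions: (i) identifying the circle packing maps $\phi_n$ with the barycentric discrete conformal maps attached to the associated admissible sequence, and (ii) upgrading ``conformal limit'' to ``biholomorphism $\Omega\to\D$'' by classical complex analysis. For (i), the key point is that the ambient manifold $(\C,g)$ is flat: geodesics are straight segments, so the carriers $\Omega_n=|T_n|$ and $\tilde\Omega_n\subset\D$ are themselves gluings of Euclidean triangles realizing the prescribed edge lengths, and $\Omega_n^\Delta$, $\tilde\Omega_n^\Delta$ are canonically isometric to $\Omega_n$, $\tilde\Omega_n$. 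Under this isometry the Riemannian barycentric map $\Psi_n$ is exactly the Euclidean barycentric parametrization $\lambda\mapsto\sum_i\lambda^i p_i$ of Section~\ref{sec:bary-eucl}, since in Euclidean space the Karcher mean is the ordinary center of mass (the computation preceding Proposition~\ref{thm:bary-prop13}), and likewise for $\tilde\Psi_n$. Hence $\Phi_n=\tilde\Psi_n\circ\phi_n\circ\Psi_n^{-1}$ \emph{is} the piecewise linear circle packing map $\phi_n\colon\Omega_n\to\tilde\Omega_n$, i.e.\ the barycentric machinery collapses to the identity in the flat case.

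Next I would invoke Lemma~\ref{thm:RS-admiss-seq} to supply a proper admissible sequence $\{(\Omega_n,\tilde\Omega_n,T_n,f_n,\tilde f_n,\epsilon_n)\}$ for $(\Omega,\D)$, with $f_n=\log(1/n)$ and $\tilde f_n=\log\tilde r_n$, and apply Theorem~\ref{thm:main-thm}. This produces a subsequence, still written $\{\phi_n\}$, converging uniformly on compact subsets of $\Omega$ to a map $\phi\colon\Omega\to\overline\D$, together with a positive continuous function $e^F$ on $\Omega$ such that $\phi_n^{*}g\to e^Fg$ in $L^\infty$ on compacta, the limit $\phi$ being conformal. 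Since $\phi^{*}g=e^Fg$ with $e^F>0$, the differential of $\phi$ is everywhere a nonzero similarity, so $\phi$ is holomorphic or antiholomorphic; being a locally uniform limit of the orientation-preserving homeomorphisms $\phi_n$, it is holomorphic and nonconstant, so by the open mapping theorem $\phi(\Omega)$ is a nonempty open subset of $\D$.

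It remains to promote $\phi$ to a Riemann mapping. Injectivity is Hurwitz's theorem: each $\phi_n$ is injective on the connected domain $\Omega$, hence the locally uniform limit is injective or constant, and constancy is excluded by $e^F>0$. For surjectivity, recall that the boundary radii of the image packings tend to $0$ uniformly (the Length--Area Lemma~\ref{thm:length-area}, as used in Lemma~\ref{thm:RS-image-gen-trian-exh}), so the carriers $\tilde\Omega_n=\phi_n(\Omega_n)$ exhaust $\D$; given $w_0\in\D$ we may therefore write $w_0=\phi_n(z_n)$ with $z_n\in\Omega_n\subset\Omega$ for all large $n$. If some subsequence of $z_n$ approached $\partial\Omega$, then the triangle of $T_n$ containing $z_n$ (whose vertices lie within one edge length of $z_n$) would map to a triangle of $\tilde\Omega_n$ meeting $\partial\D$ to within the vanishing boundary radii, forcing $|\phi_n(z_n)|\to1$ and contradicting $\phi_n(z_n)=w_0$. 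Hence $\{z_n\}$ stays in a compact subset of $\Omega$, a convergent subsequence $z_n\to z_*\in\Omega$ gives $\phi(z_*)=w_0$, and $\phi(\Omega)=\D$; thus $\phi$ is a biholomorphism, i.e.\ a Riemann mapping, and discarding the normalization we only claim subsequential convergence.

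The main obstacle is precisely this surjectivity step. Proposition~\ref{thm:main-prop} and the framework of Sections~\ref{chap:barycentric}--\ref{chap:main-result} control the maps only in the interior; the fact that the limit \emph{fills} $\D$ genuinely uses boundary information about the image packing, namely uniform decay of boundary radii and the resulting impossibility of carrying points near $\partial\Omega$ into the interior of $\D$. To make this rigorous with less hands-on boundary analysis, an alternative I might adopt is to verify that the reversed sequence $\{(\tilde\Omega_n,\Omega_n,T_n,\tilde f_n,f_n,\epsilon_n)\}$ is itself admissible for $(\D,\Omega)$ — LDCR is symmetric up to replacing $\{s_m\}$ by a comparable sequence, and the required uniform lower bound on image radii over compacta of $\D$ is closely related to Proposition~\ref{thm:image-fullness-prelim-prop} — and then apply Theorem~\ref{thm:convergenceofmaps} in the reverse direction to obtain a subsequence of $\phi_n^{-1}$ converging uniformly on compacta, whose limit must be $\phi^{-1}$.
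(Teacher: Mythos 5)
Your core reduction is exactly the paper's route: Lemma \ref{thm:RS-admiss-seq} supplies a proper admissible sequence for $(\Omega,\D)$ and Theorem \ref{thm:main-thm} then gives subsequential uniform convergence with $\Phi_n^{*}\tilde g\to e^F g$; your observation that in the flat case the Karcher mean is the Euclidean barycenter, so $\Phi_n$ literally equals the PL circle packing map $\phi_n$, is the identification the paper leaves implicit. The paper stops there (it explicitly declines to argue uniqueness and offers no further detail on the Riemann-mapping conclusion), so everything you add about injectivity and surjectivity is extra, and it is in these added steps that there are problems.

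First, Hurwitz's theorem is invoked for the sequence $\{\phi_n\}$, but Hurwitz applies to sequences of \emph{holomorphic} injections; your $\phi_n$ are only piecewise linear homeomorphisms, so the step ``locally uniform limit of injective maps is injective or constant'' is not available in this form. (It can be repaired: since the limit is holomorphic and is a locally uniform limit of orientation-preserving homeomorphisms, a degree/argument-principle count shows each value is attained at most once; or one establishes uniform quasiconformality of the $\phi_n$ via the Ring Lemma, as Rodin--Sullivan do. But as written the appeal to Hurwitz is a gap.) Second, the surjectivity argument does not go through: from $z_n\to\partial\Omega$ you conclude that the triangle containing $z_n$ maps near $\partial\D$, but Euclidean proximity of $z_n$ to $\partial\Omega$ only gives combinatorial distance of order $\delta_n n$ to $\partial\Omega_n$, which may tend to infinity, so that triangle need not be anywhere near the boundary of the carrier, and summing image radii along a long chain gives no smallness. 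Your fallback --- reversing the roles and checking that $\{(\tilde\Omega_n,\Omega_n,T_n,\tilde f_n,f_n,\epsilon_n)\}$ is admissible for $(\D,\Omega)$ so that Theorem \ref{thm:convergenceofmaps} applies to $\phi_n^{-1}$ --- is the sound strategy, but it is only sketched: condition (2) of Definition \ref{def:admissible-sequence} in the reversed direction requires a uniform bound $(1/n)/\tilde r_n(v)\leq H(K)$ on compacta $K\subset\D$, i.e.\ a lower bound $\tilde r_n(v)\geq c(K)/n$, which Proposition \ref{thm:image-fullness-prelim-prop} does not provide (it only bounds $\tilde r_n(v)$ from below by $C\tilde R_n$, and $\tilde R_n$ is merely known to tend to $0$). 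That lower bound needs its own discrete Schwarz/length--area argument in the reverse direction, so the surjectivity step remains genuinely incomplete.
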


\section{Discussion}\label{sec:discussion}

In this section we will discuss the relationship of Theorem \ref{thm:main-thm} to other convergence of discrete conformal mappings in the literature. After the convergence theorems of Rodin-Sullivan described above and its direct continuations (see \cite{he-schramm96,circle_packing}), the most significant work has been in regard to vertex scaling conformal structures. 

The challenge in using Theorem \ref{thm:main-thm} is finding the sequences, which must satisfy a number of conditions, including that the domain and range are exhausted, fullness is bounded below, the LDCR condition is satisfied, and the ratio of the exponentials of the conformal factors (sometimes called ratio of radii) is uniformly bounded. 
In the literature, there are two primary ways of finding appropriate sequences: by approximating from a known mapping or by triangulating the domain and finding the range triangulation through a uniformization procedure. We will describe some of the results in these two directions and relate them to the requirements of Theorem \ref{thm:main-thm}.

First, we consider the convergence result of B\"ucking in \cite{bucking15}. We suppose there exists a domain $D\subset\C$ and let $\Omega_n$ be subsets of $\frac{1}{n}L$ where $L$ is a fixed triangular lattice of the entire plane $\C$. We let $F:\Omega \to \C$ denote a conformal mapping. The subsets $\tilde{\Omega}_n$ are found by taking the vertex scalings of $\Omega_n$ so that $f_n(v) = |\log F'(v)|$ for all $v\in \partial \Omega_n$ and so that the curvatures are all zero in the interior. The existence of the scale factors $f_n$ is the first part of \cite[Theorem 1.2]{bucking15}.

%

By \cite[Theorem 1.2i]{bucking15}, we know that 
\begin{equation} \label{eq:fnnearlogF}
	\left|f_n(v) - \log |F'(v)|\strut \right| \leq \frac{C}{n^2}.
\end{equation} It follows that $\tilde{\Omega}_n$ exhaust $K$.

The existence of the conformal mapping implies LDCR in the following way. Given a compact set $K$ there exists $R$ such that $B(x,R)$ is contained in $\Omega$ for all $x\in K$. Thus if there are $m$ generations of the lattice $\frac{1}{n}L$ contained in $B(x,R)$, we must have $m/n<R$. 
Let $v$ and $w$ be adjacent vertices in $\Omega_n$.
By Taylor's Theorem we have that there exists a constant $M$ such that
\begin{equation} \label{eqn:taylorestlogF}
	\left|\log |F'(v)| - \log |F'(w)|\strut \right|\leq M|v-w| \leq \frac{M}{n}.
\end{equation}
We then know that
\begin{align*}
	|f_n(v) - f_n(w)| &\leq \left|f_n(v) - \log |F'(w)|\strut \right| + \left|\log |F'(v)| - \log |F'(w)|\strut \right| +\left|\log |F'(w)| - f_n(w)\strut\right| \\
	&\leq \frac{C}{n^2} + \frac{M}{n} +\frac{C}{n^2}\\
	&\leq \frac{C'}{n}
\end{align*}
for an appropriate constant $C'$.
It now follows that for $n$ large enough,
\[
	\left| \frac{e^{f_n(v)}}{e^{f_n(v)}} - 1 \right| \leq \frac{2C'}{n} \leq \frac{2C'R}{m} 
\]
so we can take $s_m = \frac{2C'R}{m}$.

Finally, the fullness condition follows from the estimate of angles given in \cite[p. 146]{bucking15} and (\ref{eqn:taylorestlogF}). Finally, we can use (\ref{eq:fnnearlogF}) to see that the edge lengths go to zero in $\tilde{\Omega}_n$. It follows that this is a proper admissible sequence, and convergence in consistent with the rest of \cite[Theorem 1.2]{bucking15}.

\begin{remark}
	In \cite{bucking08}, another convergence of discrete conformal mappings is studies that has a slower rate of convergence. This result uses a definition of discrete conformality different than the one we consider, allowing dual structures based on two sets of circle patterns. While this result does not fit into the current context, but it would be interesting to build structure in this more general setting (see also \cite{schrammquad}).
\end{remark}

Instead of prescribing the boundary data, one can also use a uniformization theorem like the Riemann Mapping Theorem. The following theorem proves convergence of extended Riemann mappings to a Jordan domain with three marked points. The extended Riemann mapping is defined to be the unique map from equilateral triangle $ABC$ with side length $1$ to the closure of the domain. 
\begin{theorem}[Theorem 6.4 of \cite{luo-sun-wu_convergence}]
	Let $\Omega \subset \C$ be a Jordan domain with $\{p,q,r\}\in \partial \Omega$. There exists a sequence of triangulated piecewise flat disks $\Omega_n$ which exhaust $\Omega$ such that the piecewise linear discrete uniformization maps $\phi_n: \Omega_n \to \triangle ABC$ converge uniformly to the extended Riemann mapping $(\Omega, (p,q,r)) \to \triangle ABC$.
\end{theorem}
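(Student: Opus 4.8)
The plan is to recognize this as another application of the machinery of Section~\ref{chap:main-result}: produce a proper admissible sequence for the pair $(\Omega, \triangle ABC)$ (interiors) with the vertex scaling discrete conformal structure, invoke Theorem~\ref{thm:main-thm} to extract a subsequence of the maps converging uniformly on compact subsets of $\Omega$ to a conformal map, and then identify the limit with the extended Riemann mapping by means of the three-point normalization. Since both $\C$ and the plane containing $\triangle ABC$ are flat, the Riemannian barycentric maps $\Psi_n,\tilde\Psi_n$ are isometries onto the straight-line triangulations (the Karcher mean of the vertices of a Euclidean simplex is the Euclidean barycenter), so up to these isometries $\Phi_n = \phi_n$ and no curvature terms enter the estimates.

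First I would set up the sequence. Take $\Omega_n$ to be the triangulated piecewise flat disks produced by the discrete uniformization procedure of \cite{luo-sun-wu_convergence} --- these are exactly the $\Omega_n$ in the statement --- with $\phi_n:\Omega_n\to\triangle ABC$ the piecewise linear discrete uniformization map, the domain conformal factors $f_n$ the prescribed ones and $\tilde f_n$ the factors realizing the flat structure on $\triangle ABC$, all for the vertex scaling structure $\mathcal{C}_{0,L^2/2}$ of Section~\ref{sec:discreteconformal map}. Then I would verify the two conditions in Definition~\ref{def:exhaustion} for both $\Omega$ and the interior of $\triangle ABC$: (i) the edge lengths tend to zero, which follows from the mesh control in \cite{luo-sun-wu_convergence}; and (ii) a uniform lower bound on fullness. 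In the domain this is arranged by choosing the initial triangulations to be uniformly nondegenerate (e.g.\ subdivisions of a fixed triangulation) and propagating the bound; in the image it is the vertex-scaling analogue of Proposition~\ref{thm:image-fullness-prelim-prop}, obtained from a discrete Schwarz-type lower bound on the ratio of edge lengths of neighboring triangles, the uniform rigidity estimate below, and Heron's formula.

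Next I would check the remaining conditions of Definition~\ref{def:admissible-sequence}. Condition~(1), LDCR (Condition~\ref{def:LDCR}), is the essential input: it is the vertex-scaling analogue of the Hexagonal Packing Lemma and is supplied by the rigidity and a~priori estimates for discrete conformal maps on triangular-lattice-type triangulations in \cite{wu-gu-sun_rigidity,luo-sun-wu_convergence}, which moreover give $s_m\le\alpha/m$ and hence the properness requirement. Condition~(2), a uniform bound $e^{\tilde f_n(v)}/e^{f_n(v)}\le H(K)$ on compacta, follows from a discrete Schwarz lemma for vertex scaling as in \cite{luo-sun-wu_convergence}. Condition~(3) is immediate, since $\triangle ABC$ is compact. \textbf{The main obstacle is the LDCR estimate}: essentially all of the quantitative control --- equicontinuity of $\{\phi_n\}$, convergence of the ratio of conformal factors, and even the image fullness bound --- is downstream of the uniform rigidity $s_m\le\alpha/m$, which is the deepest ingredient imported from the vertex-scaling literature; verifying that the triangulations produced by the uniformization actually fall under those rigidity results is the delicate point.

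With a proper admissible sequence in hand, Theorem~\ref{thm:main-thm} yields a subsequence of $\{\Phi_n\}=\{\phi_n\}$ converging uniformly on compact subsets of $\Omega$ to a map $\phi$, together with a positive continuous $e^F$ such that $\phi_n^{*}g\to e^F g$ in $L^\infty$ on compacta, so $\phi$ is conformal. It remains to identify $\phi$ with the (inverse of the) extended Riemann mapping. Each $\phi_n$ is a homeomorphism of $\Omega_n$ onto $\triangle ABC$ carrying the three marked vertices to $A,B,C$, and the two-sided metric estimate of Proposition~\ref{thm:main-prop} applies equally to $\{\phi_n^{-1}\}$, so $\phi$ is a conformal homeomorphism onto the interior of $\triangle ABC$ whose boundary extension sends $(p,q,r)$ to $(A,B,C)$; by uniqueness of the normalized conformal map, $\phi$ is the extended Riemann mapping, and since every subsequence has a further subsequence with this same limit, the full sequence converges. (Upgrading interior uniform convergence to uniform convergence up to $\partial\Omega$, as stated literally in Theorem~6.4 of \cite{luo-sun-wu_convergence}, requires the boundary estimates of that paper and is not provided by Theorem~\ref{thm:main-thm} alone.)
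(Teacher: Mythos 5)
First, be aware that the paper does not prove this statement at all: it is quoted as Theorem 6.4 of \cite{luo-sun-wu_convergence} in the discussion section, accompanied only by a short commentary on how its ingredients map onto the paper's framework (Lemma \ref{thm:VS-RS-LDCR} supplies LDCR, angle estimates in the chosen triangulations supply fullness, and conformality of the limit is obtained there by a quasiconformality argument). So there is no internal proof to compare against; the question is whether your derivation via Theorem \ref{thm:main-thm} is sound.

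It is not, and the gap is exactly the point you flagged as ``delicate'' and then asserted away: properness. To invoke Theorem \ref{thm:convergenceofpullback} (the conformality half of Theorem \ref{thm:main-thm}) you need $s_m\le\alpha/m$ in Definition \ref{def:admissible-sequence}, and you claim the rigidity results of \cite{wu-gu-sun_rigidity,luo-sun-wu_convergence} provide this rate. The paper says the opposite: ``the analogue of properness is not used since there is not currently a result estimating $s_n$ in Lemma \ref{thm:VS-RS-LDCR}.'' That lemma is purely qualitative (a sequence $s_n\downarrow 0$ with unspecified $N(n,v)$); no vertex-scaling analogue of the He--Aharonov quantitative hexagonal estimate is known. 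Without it, your route to conformality of the limit through $\Phi_n^{*}\tilde g\to e^F g$ cannot be completed, which is precisely why Luo--Sun--Wu argue conformality through quasiconformal estimates instead --- a direction the paper explicitly declines to develop (see the remark following Theorem \ref{thm:convergenceofmaps}). Secondary weaknesses: the uniform bound $e^{\tilde f_n(v)}/e^{f_n(v)}\le H(K)$ is justified in this paper only for circle packings (Section \ref{sec:circlepack}); for general structures the Discrete Schwarz Lemma is stated only as the conjectural Condition \ref{thm:disc-schwarz}, so appealing to ``a discrete Schwarz lemma for vertex scaling'' needs a concrete citation and argument. And, as you concede, uniform convergence up to $\partial\Omega$ to the \emph{extended} Riemann mapping with the three-point normalization lies outside what Theorem \ref{thm:main-thm} delivers. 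At best your sketch shows how interior, subsequential convergence would follow if a quantitative $s_m\le\alpha/m$ rigidity estimate for vertex scaling were established; it does not prove the stated theorem.
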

In particular, this work relies on a rigidity result that implies LDCR, given in Lemma \ref{thm:VS-RS-LDCR} in the Appendix. Careful estimates on the angles in the chosen triangulation and conformal factors ensure fullness. Note that, as with most of the results for vertex scaling, the analogue of properness is not used since there is not currently a result estimating $s_n$ in Lemma \ref{thm:VS-RS-LDCR}. Convergence to a conformal map follows from an argument on quasiconformality. Analogous work in the context of inversive distance packing is given in \cite{chen2025convergenceinversivedistancecircle}.

%
%

We will now discuss the application to the context of Gu-Luo-Wu \cite{gu_convergence} and Luo-Wu-Zhu \cite{luo-wu-zhu_convergence,wu-zhu_convergence}, whose work is closely related to the type of convergence result began by Rodin-Sullivan, but in the vertex scaling context. 

In \cite{gu_convergence}, the main theorems can be stated as follows. The specifics of the definitions can be found in the reference, and are discussed after the Theorem statements without giving complete definitions.

\begin{theorem}[Theorem 5.1 of \cite{gu_convergence}]
	Given a Riemannian triangle $(S,g)$ of angles $\pi/3$ a the three vertices and a $(\delta,c)$-regular sequence of geodesic triangulations $T_n$ of $(S,g)$. Let $\Omega_n$ be the piecewise flat manifolds gotten by gluing Euclidean triangles with edge lengths $l_n$ determined by the geodesic edge lengths from $T_n$ in $(S,g)$. For a given $w_n\in V(T_n)^*$, let $\tilde{\Omega}_n$ denote the piecewise flat manifold gotten by gluing Euclidean triangles with edge lengths equal to $e^{(w_n(v)+w_n(v'))/2}l_n(vv')$ for each edge $vv'$. 
	
	Then there exists $w_n\in V(T_n)^*$ such that for sufficiently large $n$, $\tilde{\Omega}_n$ is isometric to the Euclidean equilateral triangle, $\tilde{\Omega}_n$ is is a $\frac{\delta}{2}$-triangulation, and the discrete conformal maps $\phi_n:\Omega_n \to \tilde{\Omega}_n$ converge to the uniformization map $\phi$ in the sense that 
	\[
		\lim_{n\to\infty} \max_{v\in V(T_n)} \left| \phi_n(v) - \phi(v) \strut \right| =0.
	\]
\end{theorem}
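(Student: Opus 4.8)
The plan is to recover Theorem 5.1 of \cite{gu_convergence} by the same mechanism used above for B\"ucking's theorem: realize the Gu--Luo--Wu data as an admissible sequence in the sense of Definition \ref{def:admissible-sequence} for the pair $(\operatorname{int} S,\operatorname{int}\triangle ABC)$ under the vertex scaling discrete conformal structure, and then read convergence of $\{\Phi_n\}$ off Theorem \ref{thm:convergenceofmaps}. Here $(M,g)=(S,g)$ is the given Riemannian surface (smooth, so it satisfies curvature bounds $|\Rm_g|_g\le C_0$ and $|\nabla\Rm_g|_g\le C_1$ on compact interior subsets), while the range is $(\tilde M,\tilde g)=(\C,\text{Euclidean})$ with $\tilde\Omega=\operatorname{int}\triangle ABC$. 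One takes the domain factors $f_n\equiv 0$ relative to the base edge lengths $l_n$, so that $\Omega^\Delta_n=\Omega_n$ is the piecewise flat surface with those lengths, and $\tilde f_n=w_n$, so that the scaled lengths $e^{(w_n(v)+w_n(v'))/2}l_n(vv')$ are exactly the edge lengths of $\tilde\Omega_n$; the relevant structure is then the vertex scaling structure $\mathcal C_{0,l_n^2/2}$ of Section \ref{sec:discreteconformal map}.

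First I would check the two generalized triangulated exhaustions. That $\{(\Omega_n,T_n,\epsilon_n)\}$ exhausts $\operatorname{int} S$ with a uniform lower fullness bound on compact subsets is precisely the content of the $(\delta,c)$-regularity of $\{T_n\}$: the mesh tends to $0$, the geodesic triangles have angles bounded away from $0$ and $\pi$, and by comparison the flat simplices of $\Omega^\Delta_n$ inherit a lower bound on $n!\,\Vol_{g^\Delta}(\sigma)/\epsilon_n^n$, i.e.\ $(\vartheta,\epsilon_n)$-fullness. For the range, the hypothesis that $\tilde\Omega_n$ is a $\tfrac{\delta}{2}$-triangulation together with the fact (from the proof in \cite{gu_convergence}) that its edge lengths tend to zero gives a sequence $\tilde\epsilon_n\to 0$ making $\{(\tilde\Omega_n,T_n,\tilde\epsilon_n)\}$ a generalized triangulated exhaustion of $\operatorname{int}\triangle ABC$; since $\tilde\Omega_n$ is eventually isometric to the equilateral triangle it is already piecewise flat and $\tilde\Psi_n$ is the Euclidean Riemannian barycentric map. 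After passing to the common scale $\epsilon_n:=\max(\epsilon_n,\tilde\epsilon_n)$, the three conditions of Definition \ref{def:admissible-sequence} follow: the LDCR condition is supplied by the vertex scaling rigidity estimate (Lemma \ref{thm:VS-RS-LDCR}), which produces the sequence $s_m\downarrow 0$; the two-sided bound $1/H_K\le H_n(v)\le H_K$ on a compact $K$ holds because with $f_n\equiv 0$ one has $H_n(v)=e^{w_n(v)}$, which is controlled above and below on compact interior subsets by the behaviour of the conformal factors established in \cite{gu_convergence} (it approximates the local scaling of $\phi$); and the third condition is trivial, taking $x$ to be the image in $S$ of any fixed interior vertex, so that $\Phi_n(x)\in\triangle ABC\subset\subset\C$. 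Theorem \ref{thm:convergenceofmaps} then yields a subsequence $\{\Phi_{n_k}\}$ converging uniformly on compact subsets of $\operatorname{int} S$, and since $\Psi_n,\tilde\Psi_n\to\mathrm{id}$ as $\epsilon_n\to 0$ by Theorem \ref{thm:barycentric-thm2}, the maps $\Phi_{n_k}$ and the piecewise linear maps $\phi_{n_k}$ have the same limit.

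The main obstacle is that this route delivers only what our framework is designed for — \emph{subsequential} convergence of the maps on \emph{interior} compact subsets — whereas Theorem 5.1 asserts convergence of the full sequence, and up to the three marked boundary points, to the uniformization map. Two ingredients from \cite{gu_convergence} are needed to bridge this. Since no rate bound $s_m\le\alpha/m$ is presently available for vertex scaling, the sequence is not known to be proper, so Theorem \ref{thm:convergenceofpullback} does not apply, and conformality of the limit (hence its identification with $\phi$, which forces uniqueness and removes the subsequence) must be obtained from the quasiconformality argument of \cite{gu_convergence} rather than from our pullback-metric estimate. Likewise, controlling behaviour near $\partial S$, where the fullness and $H_n$ bounds may degenerate, requires the boundary estimates of \cite{gu_convergence}. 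In summary, Theorem \ref{thm:main-thm} recovers the analytic heart of the statement — equicontinuity, precompactness of $\{\Phi_n\}$, and the shape of the limiting estimate — on compact interior subsets, with the properness-type and boundary refinements imported from the source.
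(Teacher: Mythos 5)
First, be aware that the paper does not prove this statement at all: it is Theorem 5.1 of \cite{gu_convergence}, quoted in the Discussion section as an external result, and the only accompanying commentary is the observation that $(\delta,c)$-regularity and the $\delta$-triangulation condition guarantee fullness and two-sided bounds on the ratios of exponentials of conformal factors, while LDCR follows from Lemma \ref{thm:VS-RS-LDCR}. Your first two paragraphs reproduce exactly this hypothesis-matching (domain and range exhaustions, fullness from regularity, $H_n$ bounds, LDCR from the vertex scaling rigidity lemma, normalization point), so in that respect you are aligned with what the paper actually does with this statement.

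However, judged as a proof of the theorem as stated, your proposal has genuine gaps, which you partly acknowledge but which cannot be closed with the tools of this paper. The theorem's first conclusion is the \emph{existence} of $w_n$ making $\tilde{\Omega}_n$ isometric to the equilateral triangle and a $\frac{\delta}{2}$-triangulation; your construction takes $w_n$ and the $\frac{\delta}{2}$-triangulation property as given (imported "from the proof in \cite{gu_convergence}"), so the existence half is simply assumed. Second, Theorem \ref{thm:convergenceofmaps} yields only \emph{subsequential} uniform convergence on compact \emph{interior} subsets, whereas the statement asserts convergence of the full sequence in the vertex sup-norm $\max_{v\in V(T_n)}|\phi_n(v)-\phi(v)|$, which includes vertices arbitrarily close to the boundary and corners, where your fullness and $H_n$ bounds are only claimed on interior compacta. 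Third, identifying the limit with the uniformization map $\phi$ (which is also what removes the subsequence) requires conformality of the limit; within this paper that would need properness, i.e.\ $s_m\leq \alpha/m$, and as the paper itself notes no such rate is currently known for vertex scaling (Lemma \ref{thm:VS-RS-LDCR} gives no estimate on $s_n$), so Theorem \ref{thm:convergenceofpullback} is unavailable and the quasiconformality argument of \cite{gu_convergence} must be invoked. In short, the analytic core of the cited theorem—existence of $w_n$, boundary control, full-sequence convergence, and identification of the limit—lies outside what Definition \ref{def:admissible-sequence} plus Theorem \ref{thm:convergenceofmaps} can deliver, which is precisely why the paper cites the result rather than rederiving it; your write-up should present itself as a consistency check of the hypotheses against the framework, not as a proof.
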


\begin{theorem}[Theorem 6.1 of \cite{gu_convergence}]
	Given a torus with a Riemannian metric $g$ and a uniformization metric $g^*$, let $T_n$ be a sequence of $(\delta,c)$-regular geodesic subdivisions and let $l_n$ denote the geodesic lengths of the edges with $g$. Then there exist sequences of $w_n\in V(T_n)^*$ such that for sufficiently large $n$, such that $(T_n,w_n*l)$ are $\delta/2$-triangulations of a flat torus whose metric converge to $g^*$.
\end{theorem}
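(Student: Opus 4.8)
The plan is to realize $w_n$ as the solution of the \emph{discrete uniformization} problem for the vertex scaling structure of Definition~\ref{def:disc_conformal} (here $\alpha_i=0$, $\eta_{ij}=l_n(v_iv_j)^2/2$, so that an edge has length $e^{(w_i+w_j)/2}l_n$): among all conformal factors $w$ for which $(T_n,w*l_n)$ is a piecewise flat torus, pick the one whose discrete curvature $W_v(w)=2\pi-\sum\theta$ vanishes at every vertex, normalized so that the total area equals that of $g^{*}$. Existence and uniqueness (up to the normalization) follow from convexity of the discrete Yamabe functional $E_n(w)=\int^{w}\sum_v W_v\,dw_v$ — a well-defined function since $\partial W_v/\partial w_{v'}=\partial W_{v'}/\partial w_v$ — provided $E_n$ is convex and proper on the feasible set; $(\delta,c)$-regularity is exactly what makes the cotangent weights $\partial W_v/\partial w_{v'}$ nonnegative (weighted Delaunay) for $n$ large, giving convexity, and an explicit competitor from the smooth theory gives properness. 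Indeed, uniformization of the torus supplies a smooth $u:S\to\R$ with $g^{*}=e^{2u}g$, solving the linear elliptic equation $\Delta_g u = K_g$ (solvable because $\int_S K_g\,dA_g=2\pi\chi(S)=0$ by Gauss--Bonnet), and $u|_{V(T_n)}$ will serve both as the competitor bounding $E_n(w_n)$ and as the target that $w_n$ must be close to.

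Next I would make the comparison $w_n\approx u|_{V(T_n)}$ quantitative by a consistency-plus-stability argument for the discrete Laplacian. Linearizing the curvature map at a factor $w$ yields $-L_{w}$, the cotangent Laplacian of the piecewise flat metric $w*l_n$; $(\delta,c)$-regularity bounds its weights above and below, so $L_{w}$ is uniformly elliptic and, on mean-zero functions on the fixed torus $S$, obeys a discrete Poincar\'e inequality with constant depending only on $\delta,c$ — the \emph{stability} half. The \emph{consistency} half is that the cotangent Laplacian applied to a sampled $C^3$ function approximates $\Delta_g$, and that the discrete curvatures $W_v(0)$ of the geodesic triangulation $(T_n,l_n)$ approximate $K_g\,dA_g$ (the angle excess of a geodesic triangle of diameter $\epsilon_n$ is $K_g\cdot\Area+O(\epsilon_n^3)$); combining these gives $L_{0}\,(u|_{V(T_n)})=W_\cdot(0)+r_n$ with $\|r_n\|\to 0$ in the relevant discrete norm. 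Feeding this into the stability estimate, using the a priori energy bound to confine $w_n$ to a fixed neighborhood of $u|_{V(T_n)}$, and iterating the Newton correction (the curvature map is smooth with derivatives controlled on the feasible region), one obtains $\|w_n-u|_{V(T_n)}\|_{C^0}\to 0$.

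From this uniform closeness both remaining assertions follow. For adjacent vertices, $|w_n(v)-w_n(w)|\le|u(v)-u(w)|+o(1)=O(\epsilon_n)$, so each Euclidean triangle of $(T_n,w_n*l_n)$ differs from the corresponding triangle of $(T_n,l_n)$ by a scaling whose three edge factors agree to within $1+O(\epsilon_n)$; since $(T_n,l_n)$ is $\delta$-regular, the triangle inequalities hold and every angle of $(T_n,w_n*l_n)$ exceeds $\delta-o(1)\ge\delta/2$ for $n$ large, so $(T_n,w_n*l_n)$ is a genuine $\delta/2$-triangulation of a flat torus (the $W_v$ all vanish by construction). For convergence of the metric, I would compare $(T_n,w_n*l_n)$ with the piecewise flat torus $(T_n,(u|_{V(T_n)})*l_n)$, whose edge lengths $e^{(u(v)+u(w))/2}l_n(vw)$ approximate the $g^{*}$-geodesic lengths $d_{g^{*}}(v,w)$ up to a factor $1+O(\epsilon_n)$: applying Corollary~\ref{thm:bary-estimates} to $(S,g^{*})$ and this geodesic-length triangulation (which is $(\vartheta',\epsilon')$-full because $g^{*}$ is uniformly bilipschitz to $g$ and $T_n$ is $\delta$-regular) identifies that piecewise flat metric, through the Riemannian barycentric maps, with $g^{*}$ up to $O(\epsilon_n)$, while replacing $u|_{V(T_n)}$ by $w_n$ costs only $e^{\pm\|w_n-u|_{V(T_n)}\|_{C^0}}=1+o(1)$. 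Hence the flat metric of $(T_n,w_n*l_n)$ converges to $g^{*}$ uniformly — equivalently, its marked point in the moduli space of flat tori converges.

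The main obstacle is the combined uniform stability and nondegeneracy step: one must show simultaneously that $w_n$ exists, stays uniformly bounded (so the cotangent weights of $w_n*l_n$, and therefore the discrete Poincar\'e constant, do not degenerate), and is close to $u|_{V(T_n)}$ — a mild circularity, since the elliptic estimate one wants presupposes the bounded geometry one is trying to establish. I would break the circle using the convexity of $E_n$: the explicit competitor $u|_{V(T_n)}$ bounds $E_n(w_n)-\min E_n$ independently of $n$, and uniform strong convexity of $E_n$ on the feasible set (again a consequence of $(\delta,c)$-regularity) then confines $w_n$ to a fixed neighborhood of $u|_{V(T_n)}$ \emph{before} any linearized estimate is invoked; alternatively one runs the discrete Yamabe flow from $w\equiv 0$ and verifies it never leaves the good region. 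Once $w_n$ is trapped near the bounded function $u$, all the elliptic and barycentric estimates above apply with constants depending only on $\delta$, $c$, and the $C^3$-geometry of $(S,g)$.
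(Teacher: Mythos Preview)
The paper does not prove this statement. Theorem~6.1 is quoted from \cite{gu_convergence} in the Discussion section (Section~\ref{sec:discussion}) purely as a result from the literature; the surrounding text only explains how its hypotheses translate into the framework of Theorem~\ref{thm:main-thm}: the $\delta$-triangulation and $(\delta,c)$-regularity conditions supply the fullness bounds and the uniform bound on $e^{\tilde f_n}/e^{f_n}$, and LDCR is obtained from Lemma~\ref{thm:VS-RS-LDCR}. No argument for the existence of $w_n$, the $\delta/2$-angle bound, or the convergence of the flat metrics to $g^*$ is given or attempted here.

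Your proposal, by contrast, is a sketch of the proof of the cited result itself --- the variational/discrete-Yamabe existence argument, a consistency-plus-stability estimate comparing $w_n$ to the sampled smooth conformal factor $u$, and the deduction of the angle and metric-convergence statements from $\|w_n-u|_{V(T_n)}\|_{C^0}\to 0$. That is broadly the strategy of \cite{gu_convergence} (and of the related \cite{luo-wu-zhu_convergence,wu-zhu_convergence}), not of the present paper. So there is nothing to compare: you have written a plausible outline of the external theorem's proof, whereas the paper simply imports the theorem and checks that its output feeds into Definition~\ref{def:admissible-sequence}.
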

The condition of being a $\delta$-triangulation assures that inner angles of triangles are uniformly acute. The condition of $(\delta, c)$-regular assures a $\delta$-triangulation as well as a uniform bound on the ratio of the lengths to a quantity going to zero as the sequence goes to infinity. These guarantee fullness and bounds on the ratio of exponentials of conformal factors. Once again, LDCR follows from Lemma \ref{thm:VS-RS-LDCR}.

%
The following theorem proves convergence of the conformal factors for tori.
\begin{theorem}[Theorem 1.5 of \cite{wu-zhu_convergence}]
	Suppose $(M,g)$ is a closed orientable smooth Riemannian surface of genus 1, and $\bar{u}=\bar{u}_{M,g}$ is a smooth function on $M$ such that $e^{2\bar{u}}g$ is flat and $\Area (M,e^{2\bar{u}}g)=1$. Assume $T$ is a geodesic triangulation of $(M,g)$, and $l\in \R^{E(T)}_{>0}$ denotes the edge length in $(M,g)$. Then for any $\epsilon>0$, there exists constants $\delta=\delta(M,g,\epsilon)$ and $C=C(M,g,\epsilon)$ such that if $(T,l)_E$ is $\epsilon$-regular and $|l|<\delta$ then
	\begin{enumerate}
		\item there exists a unique discrete conformal factor $u\in \R^{V(T)}$, such that $(T,u*l)_E$ is globally flat and $\Area ((T,u*l)_E)=1$, and
		\item $|u-\bar{u}|_{V(T)}| \leq C|l|$. 
	\end{enumerate}
\end{theorem}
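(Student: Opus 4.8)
The plan is to realize part~(1) as a root of the discrete curvature map near the smooth conformal factor, deduce part~(2) from the size of the required correction, and obtain the uniqueness in part~(1) from the convexity of the vertex-scaling energy. Following Luo \cite{luo_yamabe}, for $u\in\R^{V(T)}$ set $(u*l)_{vw}:=e^{(u_v+u_w)/2}l_{vw}$ and, as long as these lengths violate no triangle inequality, let $K_v(u)=2\pi-\sum\theta$ be the cone-angle defect at $v$ in the resulting piecewise flat metric (recall Definition~\ref{def:disc_conformal} and the vertex-scaling structure). The map $u\mapsto(K_v(u))_{v}$ is minus the gradient of Luo's functional, which is convex on the open convex set of admissible $u$ and extends to a convex function on all of $\R^{V(T)}$ by Bobenko--Pinkall--Springborn \cite{bobenko-pinkall-springborn}; its differential is $-\Delta_u$, the cotangent-weight Laplacian of the metric $u*l$. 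Since $\epsilon$-regularity forces every face to be uniformly nondegenerate and (weighted) Delaunay, the cotangent weights are nonnegative with uniform upper and lower bounds, so $\Delta_u$ is symmetric positive semidefinite with kernel exactly the constants. As $\Area(u*l)$ is smooth and strictly increasing along the constant direction, ``$(T,u*l)_E$ globally flat with $\Area=1$'' is equivalent to $K(u)=0$ together with $\Area(u*l)=1$, the latter pinning the free additive constant; note $\sum_vK_v\equiv0$ by discrete Gauss--Bonnet, consistent with $K\equiv0$.

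Next I would construct an approximate solution from the smooth data. Let $\bar g:=e^{2\bar u}g$, a genuinely flat metric of area $1$, and set $u_0:=\bar u|_{V(T)}+c_0$, with $c_0$ chosen so that $\Area(u_0*l)=1$. The key estimate is that the Euclidean triangle on any face with edge lengths $(u_0*l)_{vw}$ is very close to the $\bar g$-geodesic triangle on the same three vertices: Taylor-expanding $\bar u$ along $g$-geodesics shows that $e^{(\bar u_v+\bar u_w)/2}l_{vw}$ agrees with the $\bar g$-distance $d_{\bar g}(v,w)$ up to relative error $O(|l|^2)$. Because $\bar g$ is flat, its geodesic triangles are Euclidean and the angles around each interior vertex already sum to $2\pi$; hence $K_v(u_0)$ is a sum, over the boundedly many (by $\epsilon$-regularity) faces at $v$, of angle discrepancies, each controlled by the $O(|l|^2)$ relative length perturbation with a constant kept finite precisely by the uniform nondegeneracy of the triangles. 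Thus the cone-angle defect of $u_0$ is small at every vertex, and testing against functions shows that $K(u_0)$ is small in the $H^{-1}$-type norm that governs the next step; the same comparison applied to areas gives $\Area(\bar u|_{V(T)}*l)=1+O(|l|^2)$, so $|c_0|\le C|l|^2$.

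Then I would close the argument with a quantitative inverse-function (Newton--Kantorovich) step at $u_0$, carried out in discrete Sobolev norms adapted to the mesh, as in standard finite-element error analysis. The linearization $-\Delta_{u_0}$ is invertible on the mean-zero subspace, and --- using the uniform weight bounds from $\epsilon$-regularity together with a comparison with the Laplace--Beltrami operator of the \emph{fixed} metric $\bar g$ --- it obeys uniform discrete elliptic estimates: the norm of $\Delta_{u_0}^{-1}$ from the $H^{-1}$-type norm to the sup norm is bounded by $C(M,g,\epsilon)$, and the second derivative of $K$ is likewise bounded in terms of $(M,g,\epsilon)$. Since $K(u_0)$ is small and the linearization is uniformly invertible with uniformly Lipschitz nonlinearity, for $|l|<\delta(M,g,\epsilon)$ small enough there is a unique zero $u$ of $K$ in a controlled sup-norm ball about $u_0$, with $\|u-u_0\|_\infty\le C|l|$; the transverse area normalization keeps us inside this ball. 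Therefore $\|u-\bar u\|_\infty\le\|u-u_0\|_\infty+|c_0|\le C|l|$, which is part~(2). The global uniqueness in part~(1) then follows from strict convexity of Luo's functional: the $u$ just produced is an \emph{interior} critical point, since all its triangles are $O(|l|^2)$-close to the honestly nondegenerate $\bar g$-geodesic ones, hence it is the unique minimizer, so no other discrete conformal factor with $K\equiv0$ and $\Area=1$ can exist.

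The main obstacle is the uniform discrete elliptic estimate for $\Delta_{u_0}$ as the triangulation ranges over the $\epsilon$-regular family: one must show that the constant converting the $H^{-1}$-type norm of the curvature defect into the sup norm of the correction depends only on $(M,g,\epsilon)$ and does not degenerate as $|l|\to0$. This is exactly where $\epsilon$-regularity is indispensable (uniform fatness plus the Delaunay property give uniformly controlled cotangent weights), and it is the analytic heart of the statement; the accompanying approximation estimate $\|K(u_0)\|\lesssim|l|^2$ pointwise is comparatively routine once the comparison with $\bar g$-geodesic triangles is in place.
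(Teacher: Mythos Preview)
The paper does not prove this statement. It appears in Section~\ref{sec:discussion} (Discussion) as a quoted result from \cite{wu-zhu_convergence}, placed alongside the companion results for spherical and hyperbolic surfaces, to illustrate how existing convergence theorems in the vertex-scaling literature relate to the hypotheses of Theorem~\ref{thm:main-thm}. There is therefore no proof in this paper to compare your proposal against.

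That said, your outline is broadly in the spirit of how such results are established in \cite{gu_convergence,luo-wu-zhu_convergence,wu-zhu_convergence}: build an approximate solution by restricting the smooth conformal factor $\bar u$ to vertices, use flatness of $e^{2\bar u}g$ to show the resulting discrete curvature is $O(|l|^2)$ small, invoke a quantitative inverse-function/Newton step against the cotangent Laplacian (whose uniform invertibility on mean-zero functions is the analytic core and depends on $\epsilon$-regularity), and appeal to convexity of Luo's functional for uniqueness. Your identification of the uniform discrete elliptic estimate as the main obstacle is accurate. One point to be careful about: the $H^{-1}\to L^\infty$ bound you invoke is not automatic in two dimensions and needs the mesh-dependent discrete Sobolev machinery developed in those papers; you gesture at this but do not supply it, so as written the proposal is a plausible sketch rather than a complete argument.
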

Note that Theorem 1.3 of \cite{luo-wu-zhu_convergence} and Theorem 1.4 of \cite{wu-zhu_convergence} prove similar results for spherical and hyperbolic surfaces. It is noted that the metric structure of edges for discrete conformal structures with hyperbolic background are simply the same as those with Euclidean background with the Euclidean lengths $l_{ij}$ replaced with $\sin$ of the spherical lengths or $\sinh$ of the hyperbolic lengths, respectively. This approach is specifically emphasized in the reformulation of Theorem 1.7 in \cite{luo-wu-zhu_convergence}.
The condition of $\epsilon$-regular ensures that angles do not degenerate and the triangulation remains uniformly Delaunay, which can be used to assure fullness.

\section{Appendix}
In this section we collect several results used in the text. 
\subsection{Analysis}
The following is a version of the Arzel\`a-Ascoli Theorem used in Section \ref{chap:main-result}.
\begin{proposition}
	\label{thm:arzela-ascoli-manifolds}
	Let $M$ and $N$ be smooth manifolds and let $\mathcal{F}$ be an equicontinuous
	collection of mappings $f:M\to N$ such that for any point $p\in M$, the set
	$\{f(p)\, : f\in \mathcal{F}\}$ is contained in a compact subset of $N$. Then
	every sequence $\{f_n\}\subset \mathcal{F}$ has a subsequence that converges
	uniformly on every compact subset of $M$.
\end{proposition}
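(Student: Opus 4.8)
The plan is to reduce the statement to the classical Arzel\`a--Ascoli theorem by exploiting that smooth manifolds are second countable and metrizable. First I would fix distance functions $d_M$ on $M$ and $d_N$ on $N$ inducing their topologies (e.g.\ Riemannian distances, or any metrics coming from metrizability), and choose a countable dense subset $D=\{p_1,p_2,\dots\}$ of $M$, which exists by second countability. Given a sequence $\{f_n\}\subset\mathcal{F}$, for each $k$ the set $\{f_n(p_k)\}_n$ lies in a fixed compact subset of $N$ by hypothesis, hence has a convergent subsequence; a standard diagonal argument then produces a single subsequence $\{g_j\}=\{f_{n_j}\}$ such that $\{g_j(p_k)\}_j$ converges in $N$ for every $k$.

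Next I would show that $\{g_j\}$ is uniformly Cauchy on every compact $K\subset M$. Fix $\epsilon>0$. Equicontinuity gives, for each $p\in M$, a radius $\delta_p>0$ with $d_M(p,q)<\delta_p\Rightarrow d_N(f(p),f(q))<\epsilon$ for all $f\in\mathcal{F}$; the balls $B(p,\delta_p/2)$, $p\in K$, cover $K$, so finitely many of them, say centered at $x_1,\dots,x_L$, suffice. By density of $D$, pick $y_i\in D\cap B(x_i,\delta_{x_i}/2)$ for each $i$. Then every $p\in K$ lies in some $B(x_i,\delta_{x_i}/2)$, and applying equicontinuity at $x_i$ to both $p$ and $y_i$ gives $d_N(f(p),f(y_i))<2\epsilon$ for all $f\in\mathcal{F}$. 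Since $\{g_j\}$ converges at the finitely many points $y_1,\dots,y_L\in D$, there is $J$ with $d_N(g_j(y_i),g_k(y_i))<\epsilon$ for all $i$ whenever $j,k\ge J$; combining the three estimates yields $d_N(g_j(p),g_k(p))<5\epsilon$ for all $p\in K$ and $j,k\ge J$. As $\epsilon$ was arbitrary, $\{g_j\}$ is uniformly Cauchy on $K$.

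Finally, for each $p\in M$ the sequence $\{g_j(p)\}$ is Cauchy and contained in a compact, hence complete, metric subspace of $N$, so it converges to some $g(p)\in N$; uniform Cauchyness on $K$ then upgrades this to uniform convergence $g_j\to g$ on $K$. Because the same subsequence $\{g_j\}$ works for every compact $K$ simultaneously, no further diagonalization over an exhaustion is needed. There is no serious obstacle here --- the result is essentially classical; the only point requiring a little care is the bookkeeping with the point-dependent equicontinuity radii $\delta_p$ when passing to a finite subcover and then to nearby points of the dense set, and the (standard) observation that smooth manifolds are metrizable.
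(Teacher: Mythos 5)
Your proof is correct: the diagonal argument over a countable dense subset of $M$, the two-step equicontinuity estimate at the finite set of ball centers, and the completeness of compact subsets of $N$ are exactly what is needed, and you rightly observe that the same subsequence works for all compact sets at once. The paper states this proposition in its appendix as a standard manifold version of the Arzel\`a--Ascoli theorem without proof, and your argument is precisely the classical one it implicitly relies on (with equicontinuity interpreted, as in the paper's application to uniformly Lipschitz maps, with respect to fixed distance functions on $M$ and $N$).
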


\subsection{Riemannian barycentric coordinates results}
The following are results on Riemannian barycentric coordinates.

\begin{lemma}[Lemma 3 of \cite{barycentric}]
	\label{thm:barycentric-lemma3}
	Let $p_0,\dots, p_n\in \R^m$ be the vertices of a $(\vartheta, \epsilon)$-full
	Euclidean $n$-simplex, and let $g_{ij}^{\Delta}=\left\langle p_i-p_0, p_j-p_0
	\right\rangle_{\R^m}$ denote the pullback of its metric to the unit simplex
	$D$. Then the eigenvalues $\lambda_k$ of $g^{\Delta}$ satisfy
	\[ \vartheta \epsilon n^{1-n}\leq \sqrt{\lambda_k}\leq \epsilon n. \]
\end{lemma}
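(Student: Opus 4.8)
The plan is to pass to the $m\times n$ matrix $A$ whose $i$th column is $p_i-p_0$ for $i=1,\dots,n$, so that the pulled-back metric on $D$ is exactly the Gram matrix $g^{\Delta}=A^{\top}A$, and its eigenvalues $\lambda_k$ are the squares of the singular values of $A$. I would then establish the two inequalities separately: the upper bound by a trace estimate, and the lower bound by combining that estimate with the determinant of $g^{\Delta}$, which is controlled by the volume of the simplex.

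For the upper bound, since $g^{\Delta}=A^{\top}A$ is positive semidefinite, every eigenvalue is at most the trace:
\[
\lambda_k \le \operatorname{tr}(g^{\Delta}) = \sum_{i=1}^{n}|p_i-p_0|^2 \le n\epsilon^2,
\]
where the last step uses the part of $(\vartheta,\epsilon)$-fullness asserting that all edge lengths are at most $\epsilon$. Taking square roots gives $\sqrt{\lambda_k}\le\sqrt{n}\,\epsilon\le n\epsilon$.

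For the lower bound, recall the Gram determinant formula $\det g^{\Delta}=\det(A^{\top}A)=(n!\,\Vol(\sigma))^2$; by the volume part of $(\vartheta,\epsilon)$-fullness this is at least $\vartheta^2\epsilon^{2n}$. Since $\det g^{\Delta}=\prod_k\lambda_k$ and each factor is at most $n\epsilon^2$ by the previous step, the smallest eigenvalue satisfies
\[
\lambda_{\min} = \frac{\det g^{\Delta}}{\prod_{k\neq\min}\lambda_k} \ge \frac{\vartheta^2\epsilon^{2n}}{(n\epsilon^2)^{\,n-1}} = \vartheta^2\epsilon^2\,n^{1-n},
\]
so $\sqrt{\lambda_k}\ge\sqrt{\lambda_{\min}}\ge\vartheta\epsilon\,n^{(1-n)/2}\ge\vartheta\epsilon\,n^{1-n}$ for $n\ge1$. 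This yields the left-hand inequality (and in fact the slightly sharper exponent $n^{(1-n)/2}$, though only the stated form is needed downstream).

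There is no serious obstacle here — the argument is elementary linear algebra. The points requiring a little care are the identification $\det g^{\Delta}=(n!\,\Vol(\sigma))^2$, i.e.\ that the $\Vol$ in Definition \ref{def:fullness} is the $n$-dimensional Euclidean volume of the nondegenerate simplex $\sigma$, and the observation that the bound $\lambda_k\le n\epsilon^2$ must be available for all $n$ eigenvalues so that the product estimate $\prod_{k\neq\min}\lambda_k\le(n\epsilon^2)^{\,n-1}$ is legitimate.
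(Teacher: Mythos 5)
Your argument is correct. Note that this paper does not prove the lemma at all --- it is quoted verbatim as Lemma 3 of \cite{barycentric} and used as a black box --- so there is no in-paper proof to compare against; judged on its own, your Gram-matrix argument is sound: the trace bound gives $\lambda_k\le n\epsilon^2$, the identity $\det g^{\Delta}=(n!\Vol(\sigma))^2$ together with $(\vartheta,\epsilon)$-fullness (where $\Vol$ is indeed Euclidean $n$-volume, since the simplex is Euclidean) gives $\det g^{\Delta}\ge\vartheta^2\epsilon^{2n}$, and the quotient estimate yields $\lambda_{\min}\ge\vartheta^2\epsilon^2 n^{1-n}$, which implies the stated bound $\sqrt{\lambda_k}\ge\vartheta\epsilon n^{1-n}$ (in fact the sharper exponent $n^{(1-n)/2}$, and likewise $\sqrt{n}\,\epsilon\le n\epsilon$ on the upper side). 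This is the standard route one would expect the cited reference to take, and all the details you flag (positive definiteness from $\vartheta>0$, availability of the bound $\lambda_k\le n\epsilon^2$ for all $n$ eigenvalues in the product) are handled correctly.
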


\begin{lemma}[Lemma 6 of \cite{barycentric}]
	\label{thm:barycentric-lemma6}
	Let $g$ and $\bar{g}$ be inner products on $\R^n$ such that all eigenvalues of
	$g$ (with respect to the Euclidean inner product) are larger than
	$\lambda_{\min}>0$ and $|g_{ij}-\bar{g}_{ij}|\leq \mu
	n^{-1}\lambda_{\min}$. Then $|(g-\bar{g})(v,v)|\leq \mu |v|^2_g$.
\end{lemma}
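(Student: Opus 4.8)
The plan is to reduce the statement to an elementary estimate on the symmetric matrix $h := g - \bar g$ together with the spectral lower bound on $g$. Writing the quadratic form out in coordinates, $h(v,v) = \sum_{i,j=1}^{n} h_{ij} v^{i} v^{j}$, the triangle inequality together with the hypothesis $|h_{ij}| = |g_{ij}-\bar g_{ij}| \le \mu n^{-1}\lambda_{\min}$ gives
\[
 |h(v,v)| \;\le\; \sum_{i,j=1}^{n} |h_{ij}|\,|v^{i}|\,|v^{j}| \;\le\; \frac{\mu\,\lambda_{\min}}{n}\Bigl(\sum_{i=1}^{n} |v^{i}|\Bigr)^{2}.
\]
The next step is to bound $\bigl(\sum_{i} |v^{i}|\bigr)^{2}$ by the Euclidean norm: by Cauchy--Schwarz (equivalently, the $\ell^{1}$--$\ell^{2}$ comparison on $\R^{n}$) one has $\bigl(\sum_{i} |v^{i}|\bigr)^{2} \le n\sum_{i} (v^{i})^{2} = n\,|v|^{2}_{\mathrm{eucl}}$, so the two powers of $n$ cancel and $|h(v,v)| \le \mu\,\lambda_{\min}\,|v|^{2}_{\mathrm{eucl}}$.

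Finally I would convert the Euclidean norm on the right-hand side into the $g$-norm using the eigenvalue hypothesis. Since $g$ is a symmetric positive definite matrix all of whose eigenvalues exceed $\lambda_{\min}$, the spectral theorem yields $|v|^{2}_{g} = v^{\top} g v \ge \lambda_{\min}\,|v|^{2}_{\mathrm{eucl}}$; combining this with the previous bound gives $|(g-\bar g)(v,v)| = |h(v,v)| \le \mu\,|v|^{2}_{g}$, which is the claim.

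There is no genuine obstacle here; the only point that requires care is the bookkeeping of the dimensional factor $n$ --- the entrywise hypothesis is stated with the factor $n^{-1}$ precisely so that it is not destroyed when passing from the $\ell^{1}$ norm of $v$ to its $\ell^{2}$ norm, and in that sense the estimate is sharp. It is worth remarking that the argument uses only the symmetry of $\bar g$ (so that $h$ is symmetric and $h(v,v)$ is genuinely its quadratic form), not positivity of $\bar g$, and that no upper bound on the spectrum of $g$ is needed.
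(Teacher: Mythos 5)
Your proof is correct: the entrywise bound plus the $\ell^1$--$\ell^2$ comparison gives $|h(v,v)|\leq \mu\lambda_{\min}|v|^2_{\mathrm{eucl}}$, and the spectral lower bound on $g$ converts this to $\mu|v|^2_g$, which is exactly the intended argument; note that the present paper only quotes this lemma from \cite{barycentric} without reproducing a proof, and your elementary estimate matches the standard one given there.
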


\begin{proposition}
	\label{thm:integral-estimate}
	Let $(M,g)$ and $(N,h)$ be homeomorphic manifolds and let $F:M\to N$ be a
	homeomorphism. Suppose that $(M,g)$ admits a finite triangulation $T$ with
	geodesic edges and that $F|_{\sigma}$ is a diffeomorphism when restricted to any (closed)
	simplex $\sigma\in T$, treating $\sigma$ as a manifold with boundary.
	
	Suppose also that on each simplex $\sigma$, $F|_{\sigma}^{*}h$ is close to $g$
	in the sense that for any $X\in T\sigma$,
	\begin{equation}
		\label{eq:integral-est-metric-bound}
		|X|_{F|_{\sigma}^{*}h}\leq C|X|_g
	\end{equation}
	for some constant $C$ which does not depend on $\sigma$.
	
	Then for any points $p,q\in M$,
	\[ d_h(F(p), F(q))\leq Cd_g(p,q). \]
\end{proposition}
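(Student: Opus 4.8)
The plan is to bound $d_h(F(p),F(q))$ by the $h$-length of $F\circ\gamma$, where $\gamma$ is a nearly length-minimizing path from $p$ to $q$ that has been arranged to lie in a single closed simplex at a time, and then to apply the hypothesis \eqref{eq:integral-est-metric-bound} simplex by simplex.

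Fix $\delta>0$. First I would choose a piecewise-smooth path $\gamma\colon[0,1]\to M$ from $p$ to $q$ with $L_g(\gamma)\le d_g(p,q)+\delta$ that is, in addition, transverse to every face of $T$ of dimension $<n$. Such a $\gamma$ exists: take any path of $g$-length at most $d_g(p,q)+\delta/2$ and perturb it rel endpoints by a $C^1$-small amount, changing the length by less than $\delta/2$ (standard transversality; if $(M,g)$ is complete — as it is here, since the finite triangulation makes $M$ compact — one may start from a minimizing geodesic). By transversality, $\gamma$ meets a face of dimension $k$ in a submanifold of dimension $1+k-n$, so it avoids the $(n-2)$-skeleton entirely and meets the $(n-1)$-skeleton in a discrete, hence finite, set of parameters $0<t_1<\dots<t_{Q-1}<1$; set $t_0=0$, $t_Q=1$ (refining the partition, if needed, so that $\gamma$ is smooth on each $[t_{i-1},t_i]$). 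On each open interval $(t_{i-1},t_i)$ the image $\gamma\big((t_{i-1},t_i)\big)$ avoids the $(n-1)$-skeleton, hence lies in the disjoint union of the open top-dimensional simplices, hence by connectedness lies in exactly one of them; let $\sigma_i$ be its closure, so that $\gamma\big([t_{i-1},t_i]\big)\subseteq\sigma_i$. (In the piecewise-flat setting this step is superfluous: there a minimizing geodesic is already polygonal and crosses simplices one at a time — this is the decomposition reused at the start of the proofs of Lemma \ref{thm:R-m-epsilon} and Proposition \ref{thm:eF-equicontinuous}.)

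The per-simplex estimate is then routine. Since $F|_{\sigma_i}$ is a diffeomorphism of manifolds-with-boundary, $F\circ\gamma|_{[t_{i-1},t_i]}$ is a smooth path in $N$ with velocity $(F|_{\sigma_i})_{*}\gamma'(t)$, where $\gamma'(t)\in T_{\gamma(t)}\sigma_i$, so \eqref{eq:integral-est-metric-bound} gives
\[
L_h\big(F\circ\gamma|_{[t_{i-1},t_i]}\big)=\int_{t_{i-1}}^{t_i}\big|\gamma'(t)\big|_{(F|_{\sigma_i})^{*}h}\,dt\;\le\;C\int_{t_{i-1}}^{t_i}\big|\gamma'(t)\big|_{g}\,dt=C\,L_g\big(\gamma|_{[t_{i-1},t_i]}\big).
\]
Summing over $i=1,\dots,Q$ yields $L_h(F\circ\gamma)\le C\,L_g(\gamma)$. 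As $F$ is a homeomorphism, $F\circ\gamma$ is a path from $F(p)$ to $F(q)$, so $d_h(F(p),F(q))\le L_h(F\circ\gamma)\le C\,L_g(\gamma)\le C\big(d_g(p,q)+\delta\big)$; letting $\delta\to0$ finishes the proof.

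I expect the only genuine obstacle to be the decomposition of $\gamma$: an arbitrary continuous — even smooth — curve can meet the simplicial skeleton infinitely often near a single point, so it need not break into finitely many single-simplex arcs, which is exactly why I would make $\gamma$ transverse to the lower-dimensional faces (at the cost of replacing a minimizing geodesic by an almost-minimizer and taking a limit). Everything downstream — the change-of-variables integral on each simplex and the finite sum — is a direct application of \eqref{eq:integral-est-metric-bound} together with the definition of Riemannian distance as an infimum of path lengths.
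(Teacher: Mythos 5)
Your proposal is correct and follows essentially the same route as the paper's argument: decompose a (near-)minimizing path into finitely many arcs, each contained in a single closed simplex, apply the pointwise bound \eqref{eq:integral-est-metric-bound} to the velocity on each arc, sum the resulting length estimates, and take the infimum over paths. The only difference is cosmetic: where the paper works directly with a minimizing geodesic and prunes the list of simplices it meets to obtain the transition times $t_i$, you achieve the finite single-simplex decomposition by perturbing to an almost-minimizer transverse to the lower-dimensional faces and letting $\delta\to 0$, which is an equally valid way to handle the (real) possibility of pathological intersections with the skeleton.
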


\subsection{Circle packing results}
The following results are on circle packing in the sense of Rodin-Sullivan.

\begin{theorem}[Proposition 6.1 of \cite{circle_packing}]
	\label{thm:hyp-circle-unif}
	Let $K$ be a combinatorial closed disc (that is, simply connected, finite, and
	with nonempty boundary). Then there exists an essentially unique univalent
	circle packing $\mathcal{P}_K\subset \D$ for $K$ such that every boundary
	circle is a horocycle.
\end{theorem}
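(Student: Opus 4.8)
The plan is to work throughout in the hyperbolic plane, which we model by $\D$, and to encode a circle packing by its vector of hyperbolic radii $r\colon V(K)\to(0,\infty]$, adopting the convention that a boundary circle is a horocycle exactly when $r_v=\infty$ for $v\in\partial K$. For three mutually externally tangent hyperbolic circles of radii $r_i,r_j,r_k$, the hyperbolic triangle joining their centres has side lengths $r_i+r_j$, $r_j+r_k$, $r_i+r_k$, so the hyperbolic law of cosines gives an explicit angle $\theta_i=\theta_i(r_i,r_j,r_k)$ at the $i$-th centre, with the appropriate horocyclic interpretation when some radius is infinite ($\theta_i\to 0$ as $r_i\to\infty$). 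A label $r$ with $r_v=\infty$ on $\partial K$ develops into an honest packing of a subregion of $\D$ precisely when the angle sum $\Theta_v(r):=\sum_{[v,j,k]}\theta_v(r_v,r_j,r_k)$ equals $2\pi$ at every interior vertex $v$. I would therefore first reduce the theorem to: (i) the system $\{\Theta_v=2\pi:v\in\operatorname{int}V(K)\}$ has exactly one solution $r\in(0,\infty)^{\operatorname{int}V(K)}$ with boundary radii $\equiv\infty$; and (ii) the resulting developed packing is univalent.

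For (i) I would use a variational principle, which gives existence and uniqueness together. After the change of variable $u_v=\log\tanh(r_v/2)$ — so $u_v<0$ on interior vertices, $u_v\to0^-$ as $r_v\to\infty$, and $u_v\to-\infty$ as $r_v\to0$ — there is an explicit function $\mathcal F(u)$ built from the Milnor--Lobachevsky (dilogarithm) function whose gradient records the angle defects, $\partial\mathcal F/\partial u_v=\pm(2\pi-\Theta_v(r(u)))$, and which is strictly convex on the slab $\{u_v<0\text{ for }v\in\operatorname{int}V(K)\}$; this is the hyperbolic instance of Colin de Verdi\`ere's functional. Packing labels are exactly the critical points of $\mathcal F$ on $\{u_v<0\text{ interior},\ u_v=0\text{ boundary}\}$, so strict convexity gives at most one, and it remains to produce one by showing $\mathcal F$ is coercive/proper there, i.e.\ excluding minimizing sequences along which some interior $u_v\to-\infty$ (radius collapsing) or some interior $u_v\to0$ (radius escaping); this is done using the monotonicity $\partial\theta_i/\partial r_i<0$, $\partial\theta_i/\partial r_j>0$ together with the limiting values of $\theta$. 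Equivalently one may bypass the functional and run Thurston's Perron/monotonicity argument: take the pointwise supremum of all ``subpacking'' labels (those with $\Theta_v\ge2\pi$ at every interior vertex) and show via monotonicity that it is finite at interior vertices and has all interior angle sums exactly $2\pi$. I would also note that the uniqueness half is the Rodin--Sullivan/Thurston rigidity statement and admits the classical maximum-principle proof: between two putative labels $r,r'$, at an interior vertex maximizing $r_v/r'_v$ the sign of the angle-sum difference forced by monotonicity makes the ratio propagate to all neighbours, hence by connectedness to all interior vertices, and the common boundary condition then pins down the packing up to hyperbolic isometries of $\D$.

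For (ii) I would develop the circles face by face from a chosen seed triangle; since $|K|$ is simply connected the development is well defined, and since every interior angle sum is exactly $2\pi$ the associated simplicial map $h\colon|K|\to\D$ is an orientation-preserving local homeomorphism at each interior vertex, hence on all of $\operatorname{int}|K|$. To promote ``local homeomorphism'' to ``embedding'' I would use the standard boundary argument: the boundary circles are horocycles, so their points of mutual tangency lie on $\partial\D$ and the image of $\partial|K|$ under $h$ is a monotone, hence simple, closed curve in $\overline{\D}$; an argument-principle/winding-number count then shows $h$ has degree one onto the Jordan region it bounds, and a degree-one proper local homeomorphism of a disc is a homeomorphism. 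This makes $\mathcal P_K$ a univalent packing in $\D$ whose boundary circles are horocycles.

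The step I expect to be the genuine obstacle is the existence half of (i): the a priori estimates ruling out degeneration of interior radii (to $0$ or to $\infty$) along a minimizing sequence — equivalently, that the Perron supremum label is finite and positive at interior vertices and realizes angle sum exactly $2\pi$. This is precisely where the hypothesis that $K$ is a finite disc with nonempty boundary is used essentially, and it is the technical core of the Koebe--Andreev--Thurston circle packing theorem; by comparison the uniqueness (convexity or maximum principle) and the univalence (degree theory) are formal or soft once (i) is in hand.
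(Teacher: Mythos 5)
A point of calibration first: the paper does not prove this statement at all --- it is imported verbatim as Proposition 6.1 of \cite{circle_packing} and used as a black box --- so there is no internal proof to compare against. Your outline is the standard proof of the maximal-packing theorem for a combinatorial closed disc: hyperbolic radius labels with boundary radii $\infty$, angle-sum equations at interior vertices, existence and uniqueness via either Thurston's Perron/monotonicity method or the Colin de Verdi\`ere--type convex functional in the variables $u_v=\log\tanh(r_v/2)$, and univalence via the developing map. This is exactly the family of arguments used in the cited source and in Beardon--Stephenson, and you correctly identify where finiteness of $K$ and the nonempty boundary enter (the a priori bounds ruling out degeneration of interior radii).

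Two places are genuinely incomplete as written. First, in the univalence step you assert that the points of mutual tangency of adjacent boundary horocycles lie on $\partial\D$; this is false --- two externally tangent horocycles touch at a point strictly inside $\D$. What lies on $\partial\D$ are the ideal centers, i.e.\ the points where each boundary horocycle is internally tangent to the unit circle, and the boundary of the carrier develops to geodesics joining these ideal points. To conclude that the developed boundary is a simple closed curve you must show these ideal points occur in the correct cyclic order on $\partial\D$, and that is precisely the nontrivial content of univalence for maximal packings (it receives its own argument in \cite{circle_packing}); it is not a purely formal degree count, although the degree argument does finish the job once simplicity of the boundary curve is established. Second, the existence half --- coercivity/properness of the functional, equivalently finiteness and positivity of the Perron supremum together with exactness of the interior angle sums --- is only gestured at (``done using monotonicity''), and as you acknowledge this is the technical core of the Koebe--Andreev--Thurston circle packing theorem. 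So the proposal is a correct and well-chosen roadmap, consistent with how the cited literature proves the result, but it is a roadmap rather than a complete proof: the heart of the existence argument and the boundary ordering needed for univalence are deferred.
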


The following is proven in \cite{rodin-sullivan} (see also \cite{circle_packing}).
\begin{lemma}[Length-Area Lemma]
	\label{thm:length-area}
	
	Let $c$ be a circle in a circle packing in the unit disk. Let $S_1, S_2,\dots,
	S_k$ be $k$ disjoint chains which separate $c$ from the origin and from a
	point on the boundary of the disk. Denote the combinatorial lengths of these
	chains by $n_1,n_2,\dots, n_k$. Then
	\[ {\radius}(c)\leq \frac{1}{\sqrt{n_1^{-1}+n_2^{-1}+\dots+n_k^{-1}}}.\]
\end{lemma}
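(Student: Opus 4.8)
The plan is to run the classical ``length--area'' argument of Rodin--Sullivan, which plays a \emph{length} estimate for each individual chain against one global \emph{area} estimate inside the unit disk. Write $r=\radius(c)$, and for each $j$ let the circles of the chain $S_j$ have radii $\rho_{j,1},\dots,\rho_{j,n_j}$. There are two ingredients: (i) for every $j$ one has $\sum_{i=1}^{n_j}\rho_{j,i}\ge r$; and (ii) since all the $c_{j,i}$ belong to a single packing in $\D$, their open disks are pairwise disjoint and contained in a disk of area $\pi$, so $\pi\sum_{j=1}^{k}\sum_{i=1}^{n_j}\rho_{j,i}^2\le\pi$, i.e.\ $\sum_{j,i}\rho_{j,i}^2\le 1$. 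Granting these, Cauchy--Schwarz gives $r^2\le\bigl(\sum_i\rho_{j,i}\bigr)^2\le n_j\sum_i\rho_{j,i}^2$, hence $\sum_i\rho_{j,i}^2\ge r^2/n_j$; summing over the (disjoint) chains and inserting (ii) yields $r^2\sum_{j=1}^k n_j^{-1}\le\sum_{j,i}\rho_{j,i}^2\le 1$, which is exactly $\radius(c)\le\bigl(\sum_j n_j^{-1}\bigr)^{-1/2}$. So everything reduces to the per-chain length bound (i), and I expect that to be the only real work.

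For (i): let $K_j=\bigcup_i\overline{D}_{j,i}$ be the union of the closed disks of the chain. Because consecutive disks of a chain are externally tangent and each disk is convex, one can choose inside $K_j$ a piecewise linear curve $\gamma_j$ that crosses each disk straight between the relevant tangency points, so that $\mathrm{length}(\gamma_j)\le\sum_i 2\rho_{j,i}$. The hypothesis that $S_j$ separates $c$ from the origin and from a point of $\partial\D$ forces $\gamma_j$ to be either a crosscut of $\D$ (its two ends on $\partial\D$, with $c$ on one side and both the origin and the distinguished boundary point on the other) or a closed loop encircling $c$. In the loop case, $\gamma_j$ is a closed curve winding around the disk $c$ of radius $r$, so its length is at least the perimeter $2\pi r$, giving $\sum_i\rho_{j,i}\ge\pi r\ge r$. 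In the crosscut case one argues radially from the origin: the ray from $0$ toward the center $z_c$ of $c$ must meet $K_j$ before it reaches $c$ (otherwise $c$ would not be separated from $0$), so $\gamma_j$ contains a point at distance $\le|z_c|-r$ from the origin, while its two endpoints lie at distance $1$ on $\partial\D$; by the triangle inequality $\mathrm{length}(\gamma_j)\ge 2\bigl(1-(|z_c|-r)\bigr)=2(1-|z_c|)+2r\ge 2r$ since $|z_c|<1$. Either way $2\sum_i\rho_{j,i}\ge\mathrm{length}(\gamma_j)\ge 2r$, which is (i).

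The step I expect to be fiddly is not these inequalities but the careful extraction of the test curve $\gamma_j$ from $K_j$ and the verification that it genuinely realizes the separation. One must observe that the disk union of a chain is topologically a thickened arc (or circle), hence can separate two points of the open disk only if its end disks meet $\partial\D$ (crosscut case) or it closes up (loop case), and that replacing the curve by straight segments inside each convex disk of the chain preserves the separation. Once that topological bookkeeping is done, the length bound (i) and the Cauchy--Schwarz/area combination above complete the proof; the wasteful factor $\pi$ appearing in the loop case merely shows the stated inequality is not sharp, which is harmless for its use downstream (e.g.\ in Lemma~\ref{thm:RS-image-gen-trian-exh}).
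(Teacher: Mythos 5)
The paper does not prove this lemma itself---it simply cites Rodin--Sullivan (and Stephenson's book)---and your argument is precisely the classical length--area proof from those sources: the per-chain bound $\sum_i \rho_{j,i}\ge r$, Cauchy--Schwarz in each chain, and the disjoint-disk area bound $\sum_{j,i}\rho_{j,i}^2\le 1$ in $\D$. It is correct as a reconstruction of that proof, with the only caveat sitting exactly where you flagged it: a chain's disk union need not be a thickened arc (non-consecutive disks may be tangent and enclose $c$ in a pocket), but in that case one extracts a closed sub-loop of chords through tangency points encircling $c$, and the same length estimate $\le\sum_i 2\rho_{j,i}$ applies, so the crosscut/loop dichotomy and the final inequality go through as in the cited argument.
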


We state below the more general Lemma of He and Rodin \cite{he-rodin-bdd-valence} generalizing the Hexagonal Packing Lemma of Rodin-Sullivan \cite{rodin-sullivan} and He and Aharonov's \cite{He-hex-packing,aharonov2} estimates of the hexagonal packing constants.
\begin{theorem}[Theorem 2.2 of \cite{he-rodin-bdd-valence}] \label{thm:hexagonalpackinglemma}
Let $n$ be an integer $\geq 2$ and let $P_n$ be a circle packing in $\C$ such
that
\begin{enumerate}
	\item The valence of $P_n$ is bounded by $k_0$,
	\item The radii of the circles of $P_n$ are all bounded above by some positive
	$r$, and
	\item there is some ``center'' circle $c_0$ of $P_n$ such that the carrier of
	$P_n$ contains a closed disc of radius $(2n+1)r$ which is concentric with
	$c_0$.
\end{enumerate}

Let $P_n'$ be any other circle packing in $\C$ combinatorially equivalent to
$P_n$. Suppose that $c_0$ is surrounded by circles $c_1, c_2, \dots, c_k$ in
$P_n$ and that $c_0',c_1',\dots, c_k'$ are the corresponding circles in $P_n'$.
Let \[ d_1(P_n,P_n') := \max\left\{
\frac{\radius(c_j')/\radius(c_l')}
{\radius(c_j)/\radius(c_l)}\, :\, 0\leq j,l\leq k \right\}, \]
and let
\begin{equation}
	\label{eq:he-rodin-hex-packing}
	s(P_n):= \sup_{P_n'} \left( d_1(P_n,P_n')-1 \strut \right).
\end{equation}

	Then there is a constant $C$ depending only on $k_0$ such
	that $s(P_n)\leq C/n$.
\end{theorem}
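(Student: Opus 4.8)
The plan is to prove the estimate in two stages: first the soft fact that $s(P_n)\to 0$ as $n\to\infty$, obtained from a normal–families argument combined with the rigidity of infinite bounded–valence circle packings; then the quantitative rate $s(P_n)\le C(k_0)/n$, which is the genuinely hard part. Throughout write $s_n$ for the supremum of $s(P_n)$ over all packings $P_n$ meeting the hypotheses with valence $\le k_0$; the goal is $s_n\le C(k_0)/n$.

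For the soft statement, suppose instead that $s_n\not\to 0$: there are $n_j\to\infty$, packings $P_{n_j}$ satisfying the hypotheses, and companions $P_{n_j}'$ with $d_1(P_{n_j},P_{n_j}')\ge 1+\delta$ for a fixed $\delta>0$ (recall $d_1$ is scale invariant). Rescale $P_{n_j}$ and $P_{n_j}'$ by similarities so that the centre circles $c_0,c_0'$ have radius $1$ and are centred at the origin. The Ring Lemma of Rodin--Sullivan \cite{rodin-sullivan}, using only the valence bound $k_0$, gives for each combinatorial ball $B_g$ about $c_0$ a two-sided bound, depending only on $g$ and $k_0$, on the radii of all circles of $B_g$ in either packing (apply it outward from $c_0$, and then back inward using that every circle of $B_g$ has its full flower realized once $n_j$ is large); hence the centres of those circles lie in a fixed compact set. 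A diagonal extraction then produces a subsequence along which the circle data of $P_{n_j}$ and of $P_{n_j}'$ converge circle-by-circle to circle packings $P_\infty,P_\infty'$. By hypothesis~(3), after rescaling the carrier of $P_{n_j}$ contains the disc of radius $\ge 2n_j+1$ about the origin, so the carriers exhaust $\C$ and $P_\infty,P_\infty'$ are packings of the whole plane whose common tangency complex is a triangulation of $\C$ of valence $\le k_0$, hence of parabolic type. By the rigidity theorem for such packings \cite{schramm1991rigidity,he-schramm96}, $P_\infty'$ is the image of $P_\infty$ under a Euclidean similarity; since both centre circles have radius $1$ that similarity is an isometry, so $P_\infty'$ is congruent to $P_\infty$ and $d_1(P_\infty,P_\infty')=1$. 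But $d_1$ depends on the finitely many circles of the flower of $c_0$ only, on which convergence is genuine and radii stay bounded away from $0$, so $d_1(P_{n_j},P_{n_j}')\to 1$, contradicting $d_1\ge 1+\delta$. Hence $s_n\to 0$.

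For the rate I would follow the refined analysis of He \cite{He-hex-packing} (see also Aharonov \cite{aharonov2} and Rodin's discrete Schwarz lemma \cite{rodinschwarz}), together with the He--Rodin observation \cite{he-rodin-bdd-valence} that every step uses only $k_0$. The mechanism is a quantitative, self-improving form of the argument above: one shows that the distortion of the flower of $c_0$ in $P_n'$, relative to $P_n$, is controlled by the distortion visible at roughly $n/2$ generations out, with a definite contraction; iterating this comparison across the dyadic scales $n,n/2,n/4,\dots$ and feeding in the already-established $s_m\to 0$ collapses the bound to $C(k_0)/n$. Equivalently one can run a length--area and pigeonhole argument in the style of \cite{rodin-sullivan}: among the $\asymp n$ combinatorial annuli around $c_0$, length--area together with the Ring Lemma forces the total of the per-annulus ``distortion from roundness'' to grow sublinearly in $n$, so some annulus $A_k$ with $k\le n$ has distortion $\le C(k_0)/n$; a stability version of the finite rigidity theorem then shows $P_n'$ is within $O(1/n)$ of a Euclidean similarity image of $P_n$ throughout the sub-disc bounded by $A_k$, and in particular on the flower of $c_0$.

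The main obstacle is exactly this last stage. Once the Ring Lemma and the bounded-valence rigidity theorem are available off the shelf, the soft convergence and the compactness are routine, and a crude version of the length--area argument already gives $s_n=O(n^{-1/2})$; but improving the exponent to the sharp value $1$, uniformly in the valence bound $k_0$, requires He's delicate estimates and is where essentially all the work lies (a naive multiplicative recursion only yields a polynomial rate $n^{-\alpha}$ with $\alpha<1$, so one must verify that the recursion, or the length--area sum, closes with the correct constant). A secondary point demanding care is that the rigidity step must invoke the rigidity of packings whose contact graph is a \emph{bounded-valence} triangulation of the plane (equivalently, of parabolic type), which is precisely where hypothesis~(1) is used.
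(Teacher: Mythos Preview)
The paper does not prove this theorem: it is stated in the Appendix purely as a citation of He--Rodin \cite{he-rodin-bdd-valence} (with the hexagonal case and the $C/n$ rate attributed to \cite{He-hex-packing,aharonov2}), and is invoked as a black box in Section~\ref{sec:circlepack}. So there is no ``paper's own proof'' to compare against.

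That said, your sketch is a faithful outline of the original argument. The soft step (normal families plus Ring Lemma plus rigidity of parabolic-type bounded-valence packings, yielding $s_n\to 0$) is correct and complete in spirit. For the quantitative step you have accurately identified both the mechanism (a self-improving recursion across dyadic scales, or equivalently a length--area pigeonhole) and the genuine obstacle: closing the recursion at the sharp exponent $1$ rather than some $\alpha<1$ is precisely the content of He's analysis, and you are right that a naive iteration only gives a polynomial rate. Your proposal is therefore not a proof but an honest roadmap pointing to \cite{He-hex-packing,he-rodin-bdd-valence} for the hard estimate---which is exactly how the present paper treats the result as well.
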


We here state the circle packing versions of the Discrete Schwarz Lemma and the
upper bound on $H_n(v)$, proving the latter, before proceeding to the general
case.

\begin{theorem}[Circle Packing Schwarz Lemma, Thm 5.1 of \cite{rodinschwarz}]
	\label{thm:schwarz-lemma-circles}
	There is an absolute constant $\alpha$ with the following property. Let $HCP_m$ be
	$m$ generations of the regular hexagonal circle packing. Let $D$ be the
	smallest disk which contains $HCP_m$. Let $HCP_m'$ be any circle packing
	combinatorially equivalent to $HCP_m$ and also contained in $D$. Then
	\begin{equation}
		\label{eq:schwarz-lemma-circles}
		R_0'\leq \alpha R_0,
	\end{equation}
	where $R_0$ and $R_0'$ are the radii of the generation zero circles in $HCP_m$ and $HCP_m'$
	respectively.
\end{theorem}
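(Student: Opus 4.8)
The plan is to follow Rodin's argument in \cite{rodinschwarz}: reduce to a length-area estimate for the nested rings of $HCP_m'$, and then upgrade the resulting soft decay to the sharp rate by exploiting the rigidity of the hexagonal combinatorics.

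First I would normalize by a Euclidean similarity so that $D=\overline{\D}$. Then the generation-$0$ circle of the regular packing has radius $\radius(D)/(2m+1)$, i.e.\ $R_0=1/(2m+1)$, so the claim becomes $R_0'\le \alpha/(2m+1)$, equivalently $R_0'\le C_0/m$ for an absolute constant $C_0$. Label the circles of $HCP_m'$ by combinatorial generation about $c_0'$; the generation-$j$ circles form a closed chain $\Gamma_j$ of $6j$ circles, and since the carrier of the first $j$ generations is a closed topological disk contained in the carrier of the first $j+1$, the $\Gamma_j$ are disjoint simple closed chains nested around $c_0'$ in the plane. As the carrier of $HCP_m'$ is a finite union of Euclidean triangles, it cannot cover $\partial\D$, so every $\Gamma_j$ separates $c_0'$ from a point of $\partial\D$ lying outside the outermost chain $\Gamma_m$.

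Next I would feed this into the Length-Area Lemma (Lemma~\ref{thm:length-area}). The one hypothesis needing attention is separation of $c_0'$ from the \emph{center} of $\D$: let $j_0$ be the largest index for which the origin lies outside the Jordan curve $\Gamma_{j_0}$ (take $j_0=m$ if it lies outside $\Gamma_m$, and $j_0=0$ if it lies inside $\Gamma_1$). For $1\le j\le j_0$ the chain $\Gamma_j$ separates $c_0'$ from both the origin and a boundary point, so the lemma yields $R_0'\le \bigl(\sum_{j=1}^{j_0}(6j)^{-1}\bigr)^{-1/2}$. This already gives the qualitative fact $R_0'\to 0$, but only at rate $O(1/\sqrt{\log m})$, and only when $j_0$ is large; it does not by itself produce the linear rate with an $m$-independent constant.

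The remaining and essential step is to upgrade this to $R_0'\le\alpha R_0$ with $\alpha$ absolute. Here I would follow Rodin's comparison of $HCP_m'$ with the regular packing $HCP_m$ filling $D$: via the packing equations the ratio $r_{HCP_m'}(v)/r_{HCP_m}(v)$ is controlled by a maximum-principle argument, which lets one propagate the ring-by-ring length-area estimates across all $m$ generations and convert them into a bound at $c_0'$; a final application of the Ring Lemma \cite{rodin-sullivan} (the sharp neighbor-ratio estimate also underlying Theorem~\ref{thm:hexagonalpackinglemma}) then passes from ``a bounded-generation sub-packing about $c_0'$ sits in a disk of radius $O(R_0)$'' to ``$R_0'=O(R_0)$''. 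I expect this bootstrapping to be the main obstacle: the Length-Area Lemma is intrinsically logarithmic, so the sharp $1/m$ rate with an $m$-independent constant genuinely requires Rodin's finer comparison rather than a direct application of the estimates assembled in the Appendix.
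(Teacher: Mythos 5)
There is no in-paper proof to compare against here: Theorem \ref{thm:schwarz-lemma-circles} is imported verbatim from Rodin (Theorem 5.1 of the cited paper) and used as a black box, so your proposal has to stand on its own. As written, it does not: the only estimate you actually carry out is the Length--Area step, and, as you yourself compute, with chains of combinatorial length $6j$ it yields $R_0'\le\bigl(\sum_{j\le j_0}(6j)^{-1}\bigr)^{-1/2}\sim \radius(D)/\sqrt{\log m}$ at best, against the required $R_0'\le\alpha R_0\sim \radius(D)/m$. Worse, your restriction to $j\le j_0$ (chains not enclosing the center of $D$) makes even this bound vacuous precisely in the dangerous case where $c_0'$ is large and swallows the center. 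The entire content of the theorem is therefore concentrated in your final paragraph, and that paragraph is not an argument: ``Rodin's comparison via the packing equations'' and ``a maximum-principle argument'' are named but no mechanism is given for converting a logarithmic decay into a linear rate with an $m$-independent constant, and the proposed closing use of the Ring Lemma assumes its own premise, since ``a bounded-generation sub-packing about $c_0'$ sits in a disk of radius $O(R_0)$'' is exactly the statement to be proved. You flag this step as ``the main obstacle,'' which is accurate; it is the theorem.

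A concrete way to close the gap using only results already quoted in the Appendix: drop the Length--Area Lemma and use the quantitative He--Rodin estimate, Theorem \ref{thm:hexagonalpackinglemma}, together with an area count. A circle of $HCP_m'$ at generation $j$ is surrounded by $m-j$ full hexagonal generations, so (after checking the hypothesis on the concentric disk in the carrier of the regular packing) Theorem \ref{thm:hexagonalpackinglemma} bounds the ratio of its radius to that of any neighbor by $1+C/(m-j)$ with $C$ absolute. Multiplying along a radial path of length $m/2$ gives
\[
\prod_{i=0}^{m/2-1}\Bigl(1+\frac{C}{m-i}\Bigr)\le \exp\Bigl(C\sum_{l>m/2}^{m}\tfrac1l\Bigr)\le e^{C\log 2},
\]
an absolute constant, so every circle of the first $m/2$ generations of $HCP_m'$ has radius at least $c_\ast R_0'$ with $c_\ast$ absolute. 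These are roughly $3m^2/4$ disjoint circles inside $D$, whence $\tfrac{3m^2}{4}\,\pi c_\ast^2 R_0'^2\le \pi\,\radius(D)^2$ and $R_0'\le \alpha\,\radius(D)/m\le \alpha' R_0$, since $\radius(D)=(2m+1)R_0$. This gives \eqref{eq:schwarz-lemma-circles} directly and is the kind of explicit propagation your ``upgrade'' step would need to spell out (whether or not it coincides with Rodin's original argument, which predates the quantitative packing estimate).
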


\begin{corollary}[Theorem 6.2 of \cite{rodinschwarz}]
	\label{thm:circles-HK-bound}
	Let $K\subset \Omega$ be compact. There is a constant $H_K$ with the following
	property. Let $\epsilon >0$ be sufficiently small and let $c\mapsto c'$ be the
	circle packing isomorphism of an $\epsilon$-circle packing approximation
	$\Omega_{\epsilon}$ of $\Omega$ onto a suitably normalized circle packing
	$D_{\epsilon}$ of the unit disk $\D$. Then $R(c')/R(c)\leq H_K$ for all
	circles $c$ of $\Omega_{\epsilon}$ which intersect $K$.
\end{corollary}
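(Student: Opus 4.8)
The plan is to reproduce Rodin's argument for Theorem~6.2 of \cite{rodinschwarz}, with the Circle Packing Schwarz Lemma (Theorem~\ref{thm:schwarz-lemma-circles}) as the essential input. The key reduction is that $\Omega_\epsilon$ is a finite piece of the regular hexagonal packing, so every circle $c$ of $\Omega_\epsilon$ has $R(c)=\epsilon$, and it therefore suffices to bound $R(c')$ above by a fixed multiple of $\epsilon$ depending only on $K$. Set $d:=d(K,\partial\Omega)>0$. If a circle $c$ of $\Omega_\epsilon$ meets $K$, its center lies at distance at least $d-\epsilon$ from $\partial\Omega$, so whenever $(2m+2)\epsilon\le d$ the closed disc of radius $(2m+1)\epsilon$ about the center of $c$ is contained in $\Omega$; since every circle of $\Omega_\epsilon$ at combinatorial distance at most $m$ from $c$ has its center within $2m\epsilon$ of the center of $c$, the combinatorial $m$-ball of $c$ in $\Omega_\epsilon$ is then a (rescaled copy of the) $m$-generation patch $HCP_m$ of the regular hexagonal packing. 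I would take $m=m(\epsilon)$ to be the largest integer with $(2m+2)\epsilon\le d$, so that $m\to\infty$ and $(2m+1)\epsilon\ge d/2$ once $\epsilon$ is small.

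The next step is to transport this patch through the circle packing isomorphism $c\mapsto c'$. Its image is a circle packing combinatorially equivalent to $HCP_m$ with generation-zero circle $c'$, and since $D_\epsilon$ packs the unit disc $\D$, every circle of this image patch lies in $\bar{\D}$, so the image patch is contained in a disc of radius $1$. Scaling the image patch up by the factor $2m+1$ gives a packing still combinatorially equivalent to $HCP_m$ and now contained in a disc of radius $2m+1$, which in the unit-circle normalization of $HCP_m$ is precisely the smallest disc containing $HCP_m$; hence Theorem~\ref{thm:schwarz-lemma-circles} bounds its generation-zero radius by $\alpha$, where $\alpha$ is the absolute constant of that theorem. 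Undoing the scaling yields $R(c')\le\alpha/(2m+1)$, and therefore
\[
\frac{R(c')}{R(c)}=\frac{R(c')}{\epsilon}\le\frac{\alpha}{(2m+1)\epsilon}\le\frac{2\alpha}{d}.
\]
Thus $H_K:=2\alpha/d$ has the required property, depending only on $K$ through $d=d(K,\partial\Omega)$ and on the absolute constant $\alpha$.

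The only substantive ingredient is Theorem~\ref{thm:schwarz-lemma-circles}; granting it, the argument is bookkeeping. The one point requiring care is the scale-invariant application of the Schwarz Lemma together with the matching of the combinatorial generation count $m$ with the Euclidean size $(2m+1)\epsilon$ of the patch — this matching is immediate here only because $\Omega_\epsilon$ has uniform radii $\epsilon$. For a general bounded-valence $\epsilon$-approximation one would instead need a ring-lemma-type comparison of combinatorial and geometric distances before invoking the corresponding Schwarz Lemma, which is the more delicate general case.
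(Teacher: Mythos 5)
Your argument is correct and is essentially the paper's proof: both isolate a (maximal) copy of $HCP_m$ around $c$ inside $\Omega_\epsilon$ whose circumscribing disk has radius at least $\tfrac{1}{2}d(K,\partial\Omega)$, transport it through the packing isomorphism, rescale so Theorem~\ref{thm:schwarz-lemma-circles} applies, and obtain $H_K = 2\alpha/d$. The only difference is bookkeeping: you fix $m$ explicitly from $d$ and $\epsilon$, whereas the paper takes $m$ maximal and gets the lower bound on the circumscribed radius $\lambda$ by a short contradiction argument.
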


\begin{proof}
	Consider a circle $c$ in $\Omega_{\epsilon}$ such that $c\cap K\neq
	\emptyset$. Let $m$ be maximal with respect to the property that
	$\Omega_{\epsilon}$ contains a copy of $HCP_m$ centered at $c$ and take
	$\Delta$ to be the smallest disk containing this $HCP_m$.
	
	The circle packing isomorphism $\Omega_{\epsilon}\to D_{\epsilon}$ gives us a
	corresponding $HCP_m'$ contained in the unit disk $\D$. Let $\lambda$ be the
	radius of the disk $\Delta$ and rescale the unit disk $\D$ so it has the same
	radius as $\Delta$. Applying Theorem \ref{thm:schwarz-lemma-circles} gives
	\[ \lambda R(c')\leq \alpha R(c) .\]
	
	Next we can assume $\epsilon$ is as small as we like, so in particular we can
	take $\epsilon < 1/4 d(K, \C\setminus \Omega)$. With this bound on $\epsilon$,
	$\lambda$ is bounded below by \[ \frac{1}{2}d(K,\C\setminus \Omega)\leq
	\lambda, \] for if not, then $\lambda < d(K, \C\setminus \Omega)/2$ and so
	\begin{align*}
		\lambda+2\epsilon
		& < \frac{1}{2}d(K,\C\setminus \Omega) +2\epsilon \\
		& < d(K,\C\setminus\Omega).
	\end{align*}
	But this is a contradiction because we assumed that $\Delta$ is the smallest
	disk containing $HCP_m$. Hence $\lambda\geq d(K,\C\setminus \Omega)/2$ and the
	ratio of radii becomes
	\[ \frac{R(c')}{R(c)}\leq \frac{\alpha}{\lambda}\leq
	\frac{2\alpha}{d(K,\C\setminus\Omega)} \] and we have the result, with $H_K:=
	2\alpha/d(K,\C\setminus\Omega)$.
\end{proof}

The equivalent of the circle packing discrete Schwarz Lemma would be the following conjecture.

\begin{condition}[Discrete Schwarz Lemma]
	\label{thm:disc-schwarz}
	There is a constant $\alpha$ independent of $v$ and $m$ with the following property.
	Let $D_m(v)$ be a realized closed combinatorial disk of generation $m$ and let
	$R>0$ be the radius of the smallest geodesic ball $B_R(v)$ containing
	$D_m(v)$. Let $\Phi$ be a barycentric discrete conformal map scaled so that
	the image $\Phi(D_m(v))$ is contained in a geodesic ball $B_R(\Phi(v))$ with
	the same radius $R$ and centered at $\Phi(v)$. Then
	\[ e^{\tilde{f}(v)}\leq \alpha e^{f(v)}\] where $\tilde{f}(v)$ and $f(v)$
	are discrete conformal factors at $v$ in $\Phi(D_m(v))$ and $D_m(v)$ respectively.
\end{condition}

Assuming the above condition is true, the following corollary is an easy consequence.

\begin{corollary}
	\label{thm:Hn-bdd}
	Let $K\subset \Omega$ be compact and choose $R$ such that $B_R(p)$ is a
	geodesic ball for every $p\in K$. Let $v$ be a vertex in $K$ and assume $m$ is
	maximal with respect to the property that there exists a realized closed
	combinatorial disk $D_m(v)$ of generation $m$ centered at $v$ with $D_m(v)$
	completely contained in $B_R(v)$. Then $H_n(v)\leq \alpha/R$, where $\alpha$ and $R$ are
	independent of $m$.
\end{corollary}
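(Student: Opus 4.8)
The plan is to mimic the proof of Corollary \ref{thm:circles-HK-bound}, with Condition \ref{thm:disc-schwarz} (the Discrete Schwarz Lemma) playing the role of the Circle Packing Schwarz Lemma, Theorem \ref{thm:schwarz-lemma-circles}.

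First I would pin down the geometric size of the domain disk. By maximality of $m$, the larger disk $D_{m+1}(v)$ is \emph{not} contained in $B_R(v)$. Since $D_{m+1}(v)$ differs from $D_m(v)$ by only one combinatorial layer, the estimate of Lemma \ref{thm:geom-diam-D} on the piecewise flat side, transferred to the Riemannian metric by Corollary \ref{thm:bary-estimates} and Proposition \ref{thm:integral-estimate}, shows that $D_{m+1}(v)$ lies in the geodesic ball $B_{\rho_n + C\epsilon_n}(v)$, where $\rho_n$ denotes the radius of the \emph{smallest} geodesic ball about $v$ containing $D_m(v)$ and $C=C(\vartheta)$. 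Hence $\rho_n + C\epsilon_n > R$, and since $\epsilon_n\to 0$ we get $R/2 \le \rho_n \le R$ for all $n$ sufficiently large. (That $B_R(v)$ is a geodesic ball for $v\in K$ is Lemma \ref{thm:ball-radius-R}.)

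Next I would rescale and apply Condition \ref{thm:disc-schwarz}. Let $\sigma_n>0$ be the radius of the smallest geodesic ball about $\Phi_n(v)$ containing $\Phi_n(D_m(v))$; since $\Phi_n(D_m(v))\subset\tilde{\Omega}$ and $\tilde{\Omega}$ is fixed, $\sigma_n \le \mathrm{diam}_{\tilde{g}}(\tilde{\Omega}) =: D$, a constant depending only on $\tilde{\Omega}$. Replacing $\tilde{f}_n$ by $\tilde{f}_n + \log(\rho_n/\sigma_n)$ multiplies every image edge length by $\rho_n/\sigma_n$, and hence produces a barycentric discrete conformal map whose restriction to $D_m(v)$ has image inside a geodesic ball of radius (essentially) $\rho_n$ about the image of $v$. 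Applying Condition \ref{thm:disc-schwarz} to this rescaled map (with $\rho_n$ in place of its ``$R$'') gives $e^{\tilde{f}_n(v)}\,(\rho_n/\sigma_n) \le \alpha_0\, e^{f_n(v)}$ for the absolute constant $\alpha_0$, so
\[
H_n(v) = \frac{e^{\tilde{f}_n(v)}}{e^{f_n(v)}} \le \alpha_0\,\frac{\sigma_n}{\rho_n} \le \frac{2\alpha_0 D}{R}.
\]
Setting $\alpha := 2\alpha_0 D$ proves the claim, with $\alpha$ and $R$ depending only on $\tilde{\Omega}$ and $K$ and in particular not on $m$ or $n$.

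The main obstacle is the rescaling step. For honest circle packings (Corollary \ref{thm:circles-HK-bound}) one simply rescales the packing inside $\C$, where the target domain $\D$ is bounded a priori; here ``rescaling'' must be realized as adding a constant to $\tilde{f}_n$, and one has to check this is compatible with the Riemannian barycentric map $\tilde{\Psi}_n$: namely, that the rescaled image genuinely fits in a geodesic ball of radius $\rho_n$, which requires control of convexity radii near $\Phi_n(v)$ and of the barycentric distortion $\beta\epsilon_n$, and that the rescaled edge lengths remain small enough for $\tilde{\Psi}_n$ to be defined. This is precisely why Condition \ref{thm:disc-schwarz} is stated only as a conjecture in this generality. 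The lower bound $\rho_n\ge R/2$, which uses maximality of $m$ together with the fullness estimates, is by contrast a routine point.
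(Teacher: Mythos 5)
Your proposal takes essentially the same route as the paper, whose entire proof of this corollary is the two-line argument ``rescale the image $\phi(D_m(v))$ and apply Condition \ref{thm:disc-schwarz} to get $Re^{\tilde{f}_n(v)}\leq \alpha e^{f_n(v)}$,'' i.e.\ exactly your rescale-and-apply-the-conjectural-Schwarz-Lemma strategy. The differences are bookkeeping: you control the domain ball via maximality ($\rho_n\geq R/2$, the analogue of the $\lambda$ bound in Corollary \ref{thm:circles-HK-bound}) and normalize the image by $\mathrm{diam}_{\tilde{g}}(\tilde{\Omega})$, so your constant is $2\alpha_0 D$ rather than the paper's $\alpha$ (the paper implicitly takes the image normalized to unit scale, as in the circle packing case where the image is $\mathbb{D}$), and the compatibility issues you flag about realizing the rescaling through $\tilde{\Psi}_n$ are precisely why Condition \ref{thm:disc-schwarz} is only stated as a conjecture --- the paper's proof does not address them either.
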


\begin{proof}
	Rescaling the image $\phi(D_m(v))$, we have by the Discrete Schwarz Lemma
	Condition that $Re^{{\tilde{f}_n}(v)}\leq \alpha e^{f_n(v)}$. Hence $H_n(v) =
	e^{{\tilde{f}_n}(v)}/e^{f_n(v)}\leq \alpha/R$, as required.
\end{proof}

The above corollary immediately implies that $H_n(v)$ is bounded above by a
constant $H_K$ depending only on the compact set $K$, which is what we need in
order for the third condition on an admissible sequence (Definition
\ref{def:admissible-sequence}) to hold.

\subsection{Vertex scaling results}

The following results are from the literature on vertex scaling discrete conformal structures.
Let $L$ be a lattice in the complex plane $\C$. Then there exists a Delaunay
triangulation $\mathcal{T}_{st}= \mathcal{T}_{st}(L)$ of $\C$ with vertex set
$L$ such that $\mathcal{T}_{st}$ is invariant under the translation action of
$L$.

Given the triangulation $\mathcal{T}_{st}$, we can define certain subcomplexes
$\mathcal{B}_m(v)$ as follows: Let $B_m(v)= \{i\in V(\mathcal{T}_{st})\, : \,
d_c(i,v)\leq m\}$, where the distance $d_c(i,v)$ is the combinatorial distance
between the vertices $i$ and $v$. Note that $B_m(v)$ is a set of vertices.
Let $\mathcal{B}_m(v)$ be the subcomplex of $\mathcal{T}_{st}$ whose simplices
have vertices contained in $B_m(v)$. 

We also need to define the phrase ``embeddable development map'', which
Luo-Sun-Wu do in the following two definitions:

\begin{definition}[Bottom of p 11 of \cite{luo-sun-wu_convergence}]
	If $(S,\mathcal{T},l)$ is a flat generalized PL metric on a simply connected
	surface $S$, then a \emph{developing map} $\phi: (S,\mathcal{T},l)\to \C$ for
	$(\mathcal{T},l)$ is an isometric immersion determined by $|\phi(v)-\phi(v')|
	= l(vv')$ for $v\sim v'$.
\end{definition}

\begin{definition}[Definition 4.1 in \cite{luo-sun-wu_convergence}]
	A flat generalized PL metric on a simply connected surface $(X,\mathcal{T},l)$
	with developing map $\phi$ is said to be \emph{embeddable} into $\C$ if for
	every simply connected finite subcomplex $P$ of $\mathcal{T}$, there exists a
	sequence of flat PL metrics on $P$ whose developing maps $\phi_n$ converge
	uniformly to $\phi|_P$ and $\phi_n: P\to \C $ is an embedding.
\end{definition}

Now we can state the vertex scaling version of the hexagonal packing lemma:

\begin{lemma}[Lemma 4.7 of \cite{luo-sun-wu_convergence}]
	\label{thm:VS-RS-LDCR}
	Take a standard hexagonal lattice $V=\Z + e^{2\pi/3}\Z$ and its associated
	standard hexagonal triangulation whose edge length function is $L: V\to
	\{1\}$. There is a sequence $s_n$ of positive numbers decreasing to zero with
	the following property. For any integer $n$ and vertex $v$, there exists
	$N=N(n,v)$ such that if $m\geq N$ and $(\mathcal{B}_m(v), w * L)$ is a flat
	Delaunay triangulated PL surface with embeddable developing map, then the
	ratio of the lengths of any to edges sharing a vertex in $\mathcal{B}_m(v)$ is
	at most $1+s_n$.
\end{lemma}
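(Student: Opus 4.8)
The plan is to prove the stated rigidity (Lemma \ref{thm:VS-RS-LDCR}) by a compactness-and-contradiction argument that reduces it to a discrete Liouville-type theorem for the infinite standard hexagonal triangulation, in direct analogy with how the uniqueness of the regular hexagonal circle packing underlies the Hexagonal Packing Lemma in \cite{rodin-sullivan}. Suppose the conclusion fails for some fixed generation count $n$ and center $v$; by the translation invariance of $(V,L)$ we may take $v=0$. Then there are a $\delta>0$, a sequence $m_k\to\infty$, and for each $k$ a flat Delaunay vertex-scaled PL metric $w_k * L$ on $\mathcal{B}_{m_k}(0)$ with embeddable developing map, together with a pair of edges $e_k,e_k'$ sharing a vertex of the fixed inner sub-complex $\mathcal{B}_n(0)$ whose length ratio in $w_k * L$ exceeds $1+\delta$. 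Since there are only finitely many such edge pairs, after passing to a subsequence we may assume $(e_k,e_k')=(e,e')$ is the same pair for every $k$. Normalizing each $w_k$ so that $e$ has unit length, the goal is to extract a subsequential limit metric on all of $V$ and reach a contradiction.

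The first substantive step is a vertex-scaling analogue of the Ring Lemma, needed to pass to the limit. Fix $N$ and consider $k$ large enough that $\mathcal{B}_{N+1}(0)\subset\mathcal{B}_{m_k}(0)$. The combination of (i) flatness, i.e. cone angle $2\pi$ at every interior vertex, (ii) the Delaunay condition, i.e. the two angles opposite each interior edge sum to at most $\pi$, and (iii) embeddability of the developing map, so that developed triangles tile a region of $\C$ without overlap, forces every edge-length ratio inside $\mathcal{B}_N(0)$ to lie in a fixed interval $[c_N^{-1},c_N]$ with $c_N$ depending only on $N$ and \emph{not} on $k$ or $m_k$. This is elementary Euclidean geometry around a surrounded flat vertex, but it is where embeddability is genuinely used. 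Together with the unit-length normalization it bounds $w_k$ uniformly on each finite $\mathcal{B}_N(0)$, so a diagonal argument produces a subsequence along which $w_k\to w_\infty$ pointwise on $V$. The limit $w_\infty * L$ is again a flat Delaunay PL metric, now on the entire plane, and its developing map $\phi_\infty\colon\C\to\C$ is a locally injective immersion that is a pointwise limit of embeddings; using the nonoverlap property in the limit together with the uniform ratio bounds to get properness, $\phi_\infty$ is itself an embedding of $\C$.

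The crux — and the step I expect to be the main obstacle — is the rigidity: the only flat, Delaunay, vertex-scaled hexagonal PL metric on all of $\C$ whose developing map embeds $\C$ is the standard one, so $w_\infty$ is constant and all edge lengths of $w_\infty * L$ are equal. I would approach this by reading $\phi_\infty$ as a discrete conformal map of the standard lattice onto another flat lattice: flatness says the discrete curvature vanishes identically, which makes $w_\infty$ discrete-harmonic for the cotangent-type discrete Laplacian attached to the standard triangulation, while the embedding property supplies the global geometric constraint (``no room to grow'') that upgrades a discrete-harmonic, bounded-oscillation scaling function to a constant. Granting this, $w_\infty * L$ has all edge ratios equal to $1$; in particular the ratio on $(e,e')$ is $1$, contradicting that it exceeded $1+\delta$ along the subsequence. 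This establishes, for each fixed $n$, the existence of a finite bound $s_n$, and a further contradiction argument across $n$ — again invoking translation invariance to absorb the dependence on $v$ into a single $N=N(n,v)$ — forces $s_n$ to be taken decreasing to $0$, which is exactly the claimed sequence. Finally I would note that this conclusion is precisely the input required to verify the LDCR condition (Condition \ref{def:LDCR}) in the vertex-scaling applications treated after the statement, so no further passage between frameworks is needed; the genuinely hard analytic content is entirely contained in the discrete Liouville rigidity of the infinite hexagonal vertex-scaled structure.
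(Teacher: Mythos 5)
This lemma is not proved in the paper at all: it is imported verbatim as Lemma 4.7 of \cite{luo-sun-wu_convergence}, so the relevant comparison is with the proof in that reference. Your overall scheme---argue by contradiction for a fixed inner generation $n$, use translation invariance to fix the center, normalize, extract a diagonal subsequential limit on all of $V$, and conclude from a rigidity (discrete Liouville) theorem for the infinite flat Delaunay vertex-scaled hexagonal triangulation---is indeed the shape of the argument in \cite{luo-sun-wu_convergence}, in direct analogy with how hexagonal packing rigidity underlies the Hexagonal Packing Lemma of \cite{rodin-sullivan}. So the architecture is right.

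The problem is that, as written, the proposal leaves the essential content unproved, and the sketch you offer for it would not work. You ``grant'' the crux rigidity statement after a heuristic: that flatness makes $w_\infty$ discrete-harmonic for the cotangent Laplacian of the standard triangulation, so that bounded oscillation plus the embedding forces $w_\infty$ to be constant. But the vanishing-curvature condition is a \emph{nonlinear} equation in the conformal factors; harmonicity for a cotangent-type Laplacian appears only in the linearization at the constant solution, so the asserted reduction to a bounded-harmonic-function Liouville theorem is not available. This rigidity is precisely the main theorem of \cite{luo-sun-wu_convergence}, proved there by substantially harder means (maximum-principle/variational arguments on the infinite complex, with the Delaunay and embeddability hypotheses used to exclude spiral-type nonconstant solutions), so deferring it means the lemma is not actually established. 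A second gap is your ``vertex-scaling Ring Lemma'': the claim that flatness, Delaunay, and embeddability give edge-ratio bounds $[c_N^{-1},c_N]$ on $\mathcal{B}_N(0)$ depending only on $N$ is asserted as elementary Euclidean geometry, but Delaunay plus cone angle $2\pi$ does not obviously preclude arbitrarily thin triangles or wildly varying adjacent edge ratios; this compactness input needs a genuine argument in the vertex-scaling setting. Finally, your claim that the limiting developing map $\phi_\infty$ is itself an embedding is stronger than what pointwise limits of embeddings yield; what survives the diagonal limit is embeddability in the sense of the definition quoted in the Appendix (uniform limits of embeddings on finite subcomplexes), and the rigidity theorem should be invoked in that form rather than for a genuine embedding.
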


\begin{acknowledgement}
	The authors would like to thank Ulrike B\"ucking, Joel Hass, Feng Luo, Yanwen Luo, Ken Stephenson, and Tianqi Wu for helpful conversations related to this work. The authors were supported by NSF DMS 1760538, CCF 1740858, and DMS 1937229.
\end{acknowledgement}

\bibliographystyle{alphaurl}

\bibliography{biblio-2.bib}
\end{document}